\newtheorem{assumption}{Assumption} 
\newtheorem{theorem}{Theorem} 
\newtheorem{lemma}{Lemma} 
\newtheorem{proposition}{Proposition} 
\newtheorem{definition}{Definition} 
\newcommand{\minimize}[2]{\ensuremath{\underset{\substack{{#1}}}%
{\mathrm{minimize}}\;\;#2 }}
\newtheorem{example}[theorem]{Example}
\newtheorem{remark}[theorem]{Remark}
\renewcommand{\sb}{{\boldsymbol u}}
\title{A Majorize-Minimize subspace approach for
$\ell_2-\ell_0$ image regularization \thanks{A preliminary version of this work has appeared in \cite{Chouzenoux11icip}.}}
\author{Emilie Chouzenoux, Anna Jezierska, Jean-Christophe Pesquet and Hugues Talbot}
\begin{document}
\maketitle

\begin{abstract}
  In this work, we consider a class of differentiable criteria for sparse
  image computing problems, where a nonconvex regularization is applied to an
  arbitrary linear transform of the target image. As special cases, it includes
  edge preserving measures or frame-analysis potentials commonly used in image
  processing. As shown by our asymptotic results, the $\ell_2-\ell_0$ penalties
  we consider may be employed to provide approximate solutions to
  $\ell_0$-penalized optimization problems. One of the advantages of the
  proposed approach is that it allows us to derive an efficient
  Majorize-Minimize subspace algorithm. The convergence of the algorithm is
  investigated by using recent results in nonconvex optimization.  The fast
  convergence properties of the proposed optimization method are illustrated
  through image processing examples. In particular, its effectiveness is
  demonstrated on several data recovery problems.
\end{abstract}

%Non-convex optimization, Kurdyka-Lojasiewicz inequality, truncated quadratic function, edge preservation, sparse representations, Majorize-Minimize algorithms,

\pagestyle{myheadings}
\thispagestyle{plain}
\markboth{ }{ }

\newpage

\section{Introduction}

 \label{sec:intro}
 The objective of this paper is to show that, for a wide range of variational
 problems in image processing, an estimation $\widehat{\xb} \in \eR^N$ of the
 target image can be efficiently obtained by using a class of nonconvex,
 regularizing criteria that promote sparsity.  More specifically, we focus on
 the following penalized optimization problem:
\begin{equation}
\minimize{\xb \in \eR^N} 
{\big(F_\delta(\xb) = \Phi(\Hb \xb - \yb) + \Psi_\delta(\xb)\big)} 
\label{Eq_CritJ},
\end{equation}
where $\Hb\neq \zerob$ is a matrix in $\eR^{Q\times N}$, \newtext[$\yb$
is a vector in  $\eR^Q$, $\Phi\colon \eR^Q \to \eR$ and $\Psi_\delta\colon \eR^N\to \eR$ are functions, and $\delta$ is a positive scalar.]
% weighted by a parameter $\lambda > 0$. 
%In this work, we will be interested in the case when $\Phi$ is a
%function with a Lipschitz continuous gradient.
We are mainly interested in the case when $\Phi$ is a differentiable
function. This includes 
the classical squared Euclidean norm. The problem then reduces to a
penalized least squares (PLS) problem \cite{Tikhonov77,Titterington85a}.
Another case of interest is when $\Phi$ is the separable Huber
function \cite[Example 5.4]{Huber81} which is useful for limiting the influence
of outliers in some observed data. Other examples shall be mentioned subsequently.

Note that the considered optimization problem is frequently encountered in the field of inverse
problems. \newtext[Then, $\yb$ is some vector of observations related to the original
image  $\overline{\xb}\in \eR^N$ through a linear model of the form 
\begin{equation}\label{e:modellin}
\yb = \Hb
\overline{\xb} + \wb,
\end{equation}
where $\Hb$  models
the measurement process (e.g. a convolution operator or a projection operator), $\wb$ is an additive
noise vector, $\Phi$ is a data-fidelity term and $\Psi_\delta$ is a regularization term.] 

An efficient strategy to promote images formed by smooth regions separated by sharp edges, is to use regularization functions of the form
\begin{equation}\label{e:Psid}
(\forall \xb \in \eR^N)\qquad
\Psi_\delta(\xb) = \sum_{s=1}^S \psi_{s,\delta}(\|\Vb_s \xb-\cb_s\|) + 
\| \Vb_0 \xb\|^2,
\end{equation}
where $\|\cdot\|$ denotes the Euclidean norm, and,
for every $s \in \{1,\ldots,S\}$,
 $\cb_s \in \eR^{P_s}$, $\Vb_s \in \eR^{P_s \times N}$ and $\psi_{s,\delta} \colon \eR \to \eR$. An important
example of such a framework is when, for every $s\in \{1,\ldots,S\}$,
$P_s = 1$ and $\cb_s = 0$, 
 and $\mathcal{V}=\big\{\Vb_s^\top, s \in \{1,\ldots,S\}\big\}\subset \eR^N$ constitutes a frame
of $\eR^N$, leading to a so-called frame-analysis regularization \cite{Elad07}.
For every $s \in \{1,\ldots,S\}$, 
$\Vb_s$ may also be a matrix serving to compute discrete
gradients (or higher-order differences), useful for edge preservation.
In particular, if $S = N$ and, for every $s\in \{1,\ldots,N\}$, 
$P_s = 2$, $\cb_s = \zerob$ and 
$\Vb_s = [\Deltab_s^{\mathrm{h}}\;\; \Deltab_s^{\mathrm{v}}]^\top$ where $\Deltab_s^{\mathrm{h}}\in
\eR^N$ (resp.  $\Deltab_s^{\mathrm{v}} \in \eR^N$) corresponds  to a horizontal (resp. vertical) gradient
operator, and $(\forall t\in \eR)$ 
$\psi_{s,\delta}(t)= \lambda |t|$ with $\lambda > 0$, the first term
in the right hand side of
\eqref{e:Psid} corresponds to a discrete version of the isotropic total
variation semi-norm \cite{Rudin92}. \newtext[
Note that other choices of $\Vb_s$ lead to different penalization strategies.
For instance, one can use nonlocal mean regularization, which has been recently studied in the context of edge preserving functions
in~\cite{Peter_2010_NID}.
]

In order to preserve significant coefficients in $\mathcal{V}$,
one may require the functions $(\psi_{s,\delta})_{1 \le s \le S}$ to have a slower-than-parabolic growth, 
as this limits the cost associated with these components. Two of
the main families of such functions known in the literature are:
\begin{enumerate}
	\item $\ell_2-\ell_1$ functions, i.e. convex, continuously differentiable,
asymptotically linear functions with a quadratic behavior
near $0$ \cite{Allain06,Charbonnier97,Lange90,Zibulevsky10}. Typical examples are the functions
$(\forall s \in \{1,\ldots,S\})$ $(\forall t
\in \eR)$ $\psi_{s,\delta}(t) = \lambda\sqrt{t^2 + \delta^2}$ with
$\lambda > 0$.
In the limit case when $\delta \to 0$, the classical $\ell_1$ penalty is obtained.
\item $\ell_2-\ell_0$ functions, i.e. asymptotically constant
  functions with a quadratic behavior near $0$ \cite{Fornasier12,Hebert92,Nikolova08,Veksler07,Zhang10}. Typical examples are
  the truncated quadratic functions $(\forall s \in \{1,\ldots,S\})$ 
$(\forall t \in \eR)$ $\psi_{s,\delta}(t)
  = \lambda 
\min(t^2/(2\delta^2),1)$ with $\lambda > 0$. When $\delta \to 0$, an
  $\ell_0$ penalty is obtained.
\end{enumerate}
The last quadratic penalty term $\xb \mapsto \|\Vb_0 \xb \|^2$ in \eqref{e:Psid} plays a
role similar to the elastic net regularization introduced in \cite{Zou05}.  It
allows us to guarantee some properties of the minimizers and minimization algorithms, when $\Hb$ is not
injective (e.g. an ideal low-pass filtering
operator). %We seek to establish some of the theoretical properties of this framework.

%The $\ell_2-\ell_0$ approach is popular in the literature due to its ability to
%preserve edges between homogeneous regions.
The $\ell_2-\ell_0$ approach has been shown in the literature to be advantageous
in many applications, for instance sparse component analysis~\cite{Mohimani09},
compressive sensing~\cite{Hyder09}, matrix completion~\cite{Malek11}, robust
regression~\cite{Meer91}, \newtext[segmentation~\cite{Rivera03}], and image recovery~\cite{Delaney98,Peter_2010_NID}. This paper mainly
addresses the latter problem, where $\ell_2-\ell_0$ is recognized for its
ability to preserve edges between homogeneous regions~\cite{Nikolova05b}. The
nonconvexity and sometimes non-differentiability of the potential function lead
however to a difficult optimization problem. In this paper, we consider a
class of nonconvex differentiable potential functions, which can be viewed as
smoothed versions of a truncated quadratic penalty function.

An effective approach for the minimization of differentiable criteria
is to consider a subspace descent algorithm
\cite{Elad06,Zibulevsky10}. For such methods, at each iteration, a
step size vector allowing an optimized combination of several search
directions is computed through a multidimensional search. Recently,
an original step size strategy based on a Majorize-Minimize (MM)
recursion was introduced in~\cite{Chouzenoux11}. This latter approach leads to a
closed-form algorithm whose practical efficiency has been demonstrated
in the context of image restoration, when using convex
penalized least squares criteria. 

Our main contributions in this paper are:
\begin{itemize}
\item to establish conditions under which 
a solution to an $\ell_0$
penalized criterion can be asymptotically obtained by using the considered
  class of penalty functions;
\item to extend the approach in \cite{Chouzenoux11} to
non necessarily convex minimization problems of the form
\eqref{Eq_CritJ};
\item to provide a proof of convergence of the iterates of
  the subspace MM algorithm;
\item to show the good \newtext[practical] performance of the proposed method for several applications.
\end{itemize}
It must be stressed that the convergence proofs
in this paper rely on recent results underlining the prominent
role played by the Kurdyka-\L{}ojasiewicz inequality \cite{Attouch08,Attouch10b,Attouch10a,Bolte10b} in the convergence study of various iterative optimization methods. Our results
constitute a significant improvement over those in \cite{Chouzenoux11}. In this
previous article, the analysis
was restricted to showing that the gradient of the objective function converges to zero.

The rest of the paper is organized as follows: 
properties of the considered optimization problem are first investigated
in section~\ref{Sec_Intro}. Then, we introduce in section~\ref{Sec_MM}
a minimization strategy based on an MM subspace
scheme. In section~\ref{Sec_CV}, we investigate the general convergence properties for the proposed
algorithm. Finally, section~\ref{Sec_Res} illustrates the
performance of our algorithm through a set of comparisons and experiments in image processing. 

\section{Considered class of objective functions}

\label{Sec_Intro}

%In this work, we will be interested in the case when $\Phi$ is a function with a Lipschitz continuous gradient. Several examples are given in Table \ref{Tab:DataFidel}. 

%\subsection{Proposed $\ell_2-\ell_0$ potential functions}
In this section, we briefly mention some useful properties of problem~\eqref{Eq_CritJ}.

\vspace*{0.2cm}

\subsection{Existence of a minimizer}
First, we provide a preliminary result concerning the existence of a solution to the
problem under the following assumption on the functions in
\eqref{Eq_CritJ}
and on the nullspaces $\operatorname{Ker}
\Hb$ and $\operatorname{Ker}{\Vb_0}$ of
$\Hb$ and $\Vb_0$, respectively.
\begin{assumption}
\label{a:existmin}
\begin{enumerate}
\item \label{a:existmini} $\Phi$ is continuous and coercive (that is $\lim_{\|\zb\|\to +\infty} \Phi(\zb) = +\infty$).
\item\label{a:existminii} For every $\delta > 0$ and $s \in \{1,\ldots,S\}$,
$\psi_{s,\delta}$ is continuous and takes nonnegative values.
\item\label{a:existminiii} $\operatorname{Ker}
\Hb \cap \operatorname{Ker}{\Vb_0} = \{\mathbf{0}\}$.
\end{enumerate}
\end{assumption}
%\end{document}

\begin{proposition}
\label{p:existmin}
Suppose that Assumption \ref{a:existmin} holds.
Then, for every $\delta > 0$, 
\begin{enumerate}
\item \label{p:existi} $F_\delta$ is coercive;
\item \label{p:existii} the set of minimizers of $F_\delta$ is
  nonempty and compact.
\end{enumerate}
\end{proposition}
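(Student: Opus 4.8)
The plan is to prove the coercivity statement (i) first and then to deduce the existence and compactness in (ii) by a standard Weierstrass argument. Since each $\psi_{s,\delta}$ is nonnegative by Assumption~\ref{a:existmin}\ref{a:existminii}, one can drop the penalty terms from below to obtain, for every $\xb\in\eR^N$,
\[
F_\delta(\xb)\;\ge\;\Phi(\Hb\xb-\yb)+\|\Vb_0\xb\|^2=:g(\xb),
\]
so it is enough to show that $g$ is coercive: indeed, $g$ coercive together with $F_\delta\ge g$ immediately gives that $F_\delta$ is coercive. The essential difficulty is that $\Hb$ need not be injective (for instance an ideal low-pass operator), so the map $\xb\mapsto\Phi(\Hb\xb-\yb)$ is by itself not coercive; the quadratic term $\|\Vb_0\xb\|^2$ and Assumption~\ref{a:existmin}\ref{a:existminiii} are exactly what is needed to control the components of $\xb$ lying in $\operatorname{Ker}\Hb$.

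The key ingredient is a coercivity bound for the linear part. The linear map $\xb\mapsto(\Hb\xb,\Vb_0\xb)$ has kernel $\operatorname{Ker}\Hb\cap\operatorname{Ker}\Vb_0=\{\mathbf{0}\}$ by Assumption~\ref{a:existmin}\ref{a:existminiii}, hence it is injective. By continuity and homogeneity, minimizing $\xb\mapsto\|\Hb\xb\|^2+\|\Vb_0\xb\|^2$ over the compact unit sphere yields a constant $\mu>0$ such that
\[
\|\Hb\xb\|^2+\|\Vb_0\xb\|^2\;\ge\;\mu\,\|\xb\|^2\qquad(\forall\xb\in\eR^N).
\]

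I would then establish coercivity of $g$ by contradiction. Suppose there is a sequence $(\xb_n)_n$ with $\|\xb_n\|\to+\infty$ while $g(\xb_n)\le M$ for some $M$. Since $\Phi$ is continuous and coercive on $\eR^Q$ by Assumption~\ref{a:existmin}\ref{a:existmini}, it attains its infimum and is thus bounded below, say by $m$. The bound $g(\xb_n)\le M$ then forces both $\|\Vb_0\xb_n\|^2\le M-m$ and $\Phi(\Hb\xb_n-\yb)\le M$; coercivity of $\Phi$ makes $(\Hb\xb_n-\yb)_n$, and hence $(\Hb\xb_n)_n$, bounded. Consequently $\|\Hb\xb_n\|^2+\|\Vb_0\xb_n\|^2$ is bounded, and the displayed lower bound gives that $(\xb_n)_n$ is bounded, contradicting $\|\xb_n\|\to+\infty$. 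This proves (i).

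For (ii), note that $F_\delta$ is continuous, being a finite sum of compositions of the continuous maps $\Phi$ and $(\psi_{s,\delta})_{1\le s\le S}$ with the continuous linear maps and norms. A continuous coercive function on $\eR^N$ attains its infimum: any sublevel set $\{\xb:F_\delta(\xb)\le\alpha\}$ with $\alpha$ in the range of $F_\delta$ is nonempty, closed by continuity and bounded by coercivity, hence compact, so $F_\delta$ attains its minimum on it. The set of minimizers equals $\{\xb:F_\delta(\xb)=\inf F_\delta\}$, which is closed and contained in a bounded sublevel set, hence compact and nonempty. I expect the only real obstacle to be the coercivity step (i), and within it the use of Assumption~\ref{a:existmin}\ref{a:existminiii} to compensate for the possible non-injectivity of $\Hb$; the remaining arguments are routine.
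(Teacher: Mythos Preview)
Your proposal is correct and follows essentially the same approach as the paper: both drop the nonnegative penalty terms to reduce to the lower bound $g(\xb)=\Phi(\Hb\xb-\yb)+\|\Vb_0\xb\|^2$, use the injectivity of the stacked operator $[\Hb;\Vb_0]$ (the paper phrases this via its minimum nonzero singular value, you via a compactness argument on the sphere) together with the coercivity and lower-boundedness of $\Phi$ to show $g$ is level-bounded, and then invoke continuity for~(ii). The only stylistic difference is that the paper works directly with sublevel sets while you argue by contradiction with a diverging sequence; the substance is the same.
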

\begin{proof}
Let $\delta > 0$.
Since, for every $s\in\left\{1,\cdots,S\right\}$, $\psi_{s,\delta}\ge 0$, we have 
\begin{equation}\label{e:Fdboundinf}
(\forall \xb\in \eR^N)\qquad
F_\delta(\xb) \ge 
\Phi(\Hb \xb - \yb) + \|\Vb_0 \xb\|^2 = \underline{F}(\xb).
\end{equation}
This implies that, for every $\eta \in \eR$,
\begin{equation}\label{e:inclulevel}
\operatorname{lev}_{\le \eta} F_\delta 
= \{\xb \in \eR^N \mid
F_\delta(\xb) \le \eta\} 
\subset \operatorname{lev}_{\le
  \eta}\underline{F}.
\end{equation}
As $\Phi$ is continuous and coercive, $\inf \Phi > -\infty$. 
For every
$\xb \in \eR^N$ and $\eta \in \eR$, if $\xb \in
\operatorname{lev}_{\le \eta}\underline{F}$, then
\begin{align}&\Phi(\Hb \xb - \yb) \le \eta \label{e:levPhi}\\
&\|\Vb_0 \xb\|^2 \le \eta -\inf \Phi. \label{e:levPi}
\end{align}
Then, as a consequence of \eqref{e:levPhi} and the coercivity of $\Phi$,
there exists $\zeta > 0$ such that, for every $\xb \in 
\operatorname{lev}_{\le \eta} \underline{F}$, 
\begin{equation}\label{e:boundxhp}
\|\Hb\xb\| \le \zeta.
\end{equation}
The combination of \eqref{e:levPi} and \eqref{e:boundxhp} shows that
there exists $\zeta'> 0$ such that, for every $\xb \in 
\operatorname{lev}_{\le \eta} \underline{F}$, 
$\| {\boldsymbol A} \xb \| \le \zeta'$ where 
\begin{equation}
{\boldsymbol A} = \begin{bmatrix}
\Hb\\
\Vb_0
\end{bmatrix}.
\end{equation}
It can be deduced that, for every $\xb \in 
\operatorname{lev}_{\le \eta} \underline{F} \cap
(\operatorname{Ker}{\boldsymbol A})^\perp$,
\begin{equation}
\underline{\nu}  \|\xb\| \le \zeta'
\end{equation}
where $\underline{\nu}$ 
is the minimum non-zero singular value of ${\boldsymbol A}$
(the existence of which is guaranteed since ${\boldsymbol A} \neq \zerob$).
In addition, $\operatorname{Ker}{\boldsymbol A} = \operatorname{Ker}
\Hb \cap \operatorname{Ker}\Vb_0 = \{\mathbf{0}\}$, which
implies that $(\operatorname{Ker}{\boldsymbol A})^\perp = \eR^N$.
Hence, $\underline{F}$ is a level-bounded function,
that is, for every $\eta \in \eR$, $\operatorname{lev}_{\le \eta} \underline{F}$
is bounded (and possibly empty).
Using \eqref{e:inclulevel}, we can conclude that
$F_\delta$ is a level-bounded function  (or equivalently,
it is coercive \cite[Proposition 11.11]{Rockafellar97}).
As $F_\delta$ is also continuous, 
\ref{p:existii} follows from \cite[Theorem~1.9]{Rockafellar97}.
\end{proof}

\begin{remark}
\begin{enumerate}
\item In the particular case when $\Hb$ is injective, 
Assumption \ref{a:existmin}\ref{a:existminiii} is satisfied if 
$\Vb_0 = \zerob$. The injectivity of $\Hb$
obviously holds when $\Hb = \Ib$
in \eqref{e:modellin}, which typically corresponds to denoising applications.
%\end{remark}
%\begin{remark}
\item When $\Vb_0 = \zerob$, the existence of a minimizer of $F_\delta$ with $\delta > 0$ can also be guaranteed
under other useful conditions. For example, this property holds under  
Assumptions~\ref{a:existmin}\ref{a:existmini} and \ref{a:existmin}\ref{a:existminii},
if $\operatorname{Ker}\Hb \cap \bigcap_{s=1}^S \operatorname{Ker} \Vb_s = \{0\}$,
and when for every $s\in \{1,\ldots,S\}$, $\psi_{s,\delta}$ is coercive.
\end{enumerate}
\end{remark}

\subsection{Non-convex regularization functions}

In the remainder of this work, we will be interested in potentials
satisfying the following additional property:
\begin{assumption}
\label{as:psideb}
\begin{enumerate}
%\item $(\forall c \in \{1,\ldots,C\})$ 
%$(\forall \delta \in ]0,+\infty))$ $\psi_{c,\delta}$ is differentiable.
\item \label{as:psidebii} $(\forall s \in \{1,\ldots,S\})$ 
$(\forall (\delta_1,\delta_2) \in  (0,+\infty)^2)$
$\delta_1 \le \delta_2$ $\Rightarrow$ $(\forall t\in \eR)$
$\psi_{s,\delta_1}(t) \ge \psi_{s,\delta_2}(t)$.
%\ge 0$.
\item \label{as:psidebiii} There exists $\lambda > 0$ such that
\begin{equation}
(\forall s \in \{1,\ldots,S\})
(\forall t \in \eR)\qquad \lim_{\substack{\delta \to 0\\\delta > 0}} \psi_{s,\delta}(t) 
= \lambda \chi_{\eR\backslash \left\{0\right\}}(t)
\end{equation}
where
$
 \chi_{\eR\backslash \left\{0\right\}}(t) =
\begin{cases}
0 & \mbox{if $t = 0$}\\
1 & \mbox{otherwise.}
\end{cases}
$
\end{enumerate}
\end{assumption}

Assumption \ref{as:psideb}\ref{as:psidebiii} implies that a binary penalty function is asymptotically
obtained. Examples of functions $\psi_{s,\delta}$ with $s\in
\{1,\ldots,S\}$ and $\delta > 0$  satisfying Assumptions \ref{a:existmin}\ref{a:existminii} and \ref{as:psideb} are
provided below:

\begin{example}\label{ex:psi}
\begin{enumerate}
\item \label{ex:quadtrunc} Truncated quadratic potential~\cite{Veksler_O_1999_phd_efficient_gbemmcv}: $$(\forall t
\in \eR) \qquad \psi_{s,\delta}(t) = \lambda \min \left(
\frac{t^2}{2
  \delta^2},1 \right), \quad \lambda > 0.$$
\item \label{ex:psi1} Geman-McClure potential~\cite{Geman_1985_bayesian_image_analysis}: $$(\forall t
\in \eR)\qquad \psi_{s,\delta}(t) = \frac{\lambda t^2}{2 \delta^2 +
  t^2}, \quad \lambda > 0.$$
\item \label{ex:psi2} Welsch potential~\cite{Welsh_1978_robust_regression}: $$(\forall t
\in \eR) \qquad \psi_{s,\delta}(t) = \lambda \Big(1-\exp(-\frac{t^2}{2
  \delta^2})\Big), \quad \lambda > 0.$$
\item \label{ex:psi3} Hyberbolic tangent potential: 
$$(\forall t
\in \eR) \qquad \psi_{s,\delta}(t) = \lambda
\tanh\Big(\frac{t^2}{2\delta^2}\Big), \quad \lambda > 0.
$$
\item \label{ex:psi4} Tukey biweight potential~\cite{Black_1998_robust_anisotropic_diffusion}:
$$
(\forall t \in \eR) \qquad \psi_{s,\delta}(t) = \left\{ \begin{array}{ll} \lambda \left(1-(1- \frac{t^2}{6 \delta^2})^3\right) & \text{if} \quad |t| \le \sqrt{6} \delta\\
\lambda & \text{otherwise}
\end{array} \right., \quad \lambda > 0.
$$
\end{enumerate}
\end{example}

The latter four functions are such that $\psi_{s,\delta}(t) \sim \lambda
t^2/(2\delta^2)$ as $t\to 0$. They can thus be viewed as smoothed versions of the one-variable truncated 
quadratic function in Example~\ref{ex:psi}\ref{ex:quadtrunc} (see Figure~\ref{fig:compar}).

\begin{figure}[h]
\centering
\includegraphics[width=8cm]{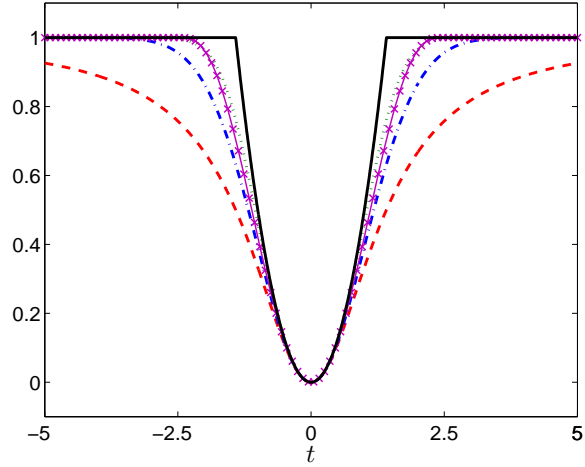}
\caption{\small Truncated quadratic penalty in Example \ref{ex:psi}\ref{ex:quadtrunc} (black, full) and its smooth
  approximations $\psi_{s,\delta}(t)$ as defined in Examples
  \ref{ex:psi}\ref{ex:psi1} (red, dashed), \ref{ex:psi}\ref{ex:psi2}
  (blue, dash-dot), \ref{ex:psi}\ref{ex:psi3}
  (green, dot), and \ref{ex:psi}\ref{ex:psi4}
  (magenta, cross), for parameters $\lambda = 1$ and $\delta = 1$.}%1/\sqrt{2}
   \label{fig:compar}
\end{figure}

\vspace*{0.2cm}
\subsection{Asymptotic convergence to $\ell_0$ criterion}

The asymptotic behavior of the considered class of potentials can now be derived by showing the epi-convergence of
$F_\delta$ to the following block (or group) $\ell_0$-penalized objective function:
\begin{equation}
F_0\colon \xb \mapsto \Phi(\Hb \xb - \yb) + \lambda \ell_{0}(\Vb \xb-\cb) +
\|\Vb_0\xb\|^2,
\end{equation}
where $\Vb = \left[\Vb_1^\top\,|\,\ldots\,|\,\Vb_S^\top\right]^\top$,
$\cb = \left[\cb_1^\top,\ldots,\cb_S^\top\right]^\top$,
 and $\ell_{0}$ denotes the so-called `block $\ell_0$ cost' \cite{Eldar10}
defined as
% \begin{equation}
% \ell_{0}(\Vb \xb-\cb) = \sum_{s=1}^S \mathsf{1}_{\{0\}}(\| \Vb_s \xb-\cb_s \|).
% \end{equation}
\begin{equation}\label{e:defl0}
(\forall \tb = [\tb_1^\top,\ldots,\tb_S^\top]^\top
\in \eR^{P_1+\cdots+P_S})\qquad \ell_0(\tb) = \sum_{s=1}^S \chi_{\eR\backslash \left\{0\right\}}(\|\tb_s\|),
\end{equation}
where, for every $s\in \{1,\ldots,S\}$, $\tb_s \in \eR^{P_s}$.
When $P_1 = \ldots = P_S=1$, \eqref{e:defl0} provides the standard expression of
the $\ell_0$ cost of $\eR^S$.

\begin{proposition} Suppose that Assumptions \ref{a:existmin} and
\ref{as:psideb} hold.
Let $(\delta_n)_{n\in \mathbb{N}}$ be a decreasing sequence of positive real numbers
converging to $0$. Then,
\begin{enumerate}
\item $\inf F_{\delta_n} \to \inf F_0$ as $n \to +\infty$.
\item If $(\forall n\in \mathbb{N})$ $\widehat{\xb}_n$ is a minimizer
of $F_{\delta_n}$, then the sequence $(\widehat{\xb}_n)_{n\in \mathbb{N}}$ is
bounded and all its cluster points are minimizers of $F_0$.
\item If $F_0$ has a unique minimizer $\widetilde{\xb}$, then 
$\widehat{\xb}_n \to \widetilde{\xb}$ as $n\to +\infty$.
\end{enumerate}
\end{proposition}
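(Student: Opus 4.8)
The three assertions are exactly the conclusions that the theory of epi-convergence attaches to a family of minimization problems, so the plan is to show that $(F_{\delta_n})_{n\in\mathbb{N}}$ epi-converges to $F_0$, that the family is (uniformly) level-bounded, and then to invoke the standard stability results for epi-convergent sequences \cite[Theorem~7.33]{Rockafellar97}. The first step is to identify $F_0$ as the epi-limit. By Assumption~\ref{as:psideb}\ref{as:psidebii} and the fact that $(\delta_n)$ decreases to $0$, each $\psi_{s,\delta_n}$ increases with $n$, so $(F_{\delta_n})$ is a \emph{nondecreasing} sequence of continuous (hence lower semicontinuous) functions. Its pointwise limit is computed termwise: by Assumption~\ref{as:psideb}\ref{as:psidebiii}, $\psi_{s,\delta_n}(\|\Vb_s\xb-\cb_s\|)\to\lambda\,\chi_{\eR\backslash\{0\}}(\|\Vb_s\xb-\cb_s\|)$, and since $\|\Vb_s\xb-\cb_s\|=0$ precisely when $\Vb_s\xb=\cb_s$, summing over $s$ and using the definition \eqref{e:defl0} gives $\lim_n F_{\delta_n}=F_0$ pointwise.

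Next I would check that $F_0$ is lower semicontinuous: each map $\xb\mapsto\chi_{\eR\backslash\{0\}}(\|\Vb_s\xb-\cb_s\|)$ vanishes on the closed affine set $\{\xb\mid\Vb_s\xb=\cb_s\}$ and equals $1$ elsewhere, hence is lsc, while $\Phi(\Hb\cdot-\yb)$ and $\|\Vb_0\cdot\|^2$ are continuous. Because a nondecreasing sequence of lsc functions epi-converges to the lsc closure of its pointwise supremum (see, e.g., \cite[Proposition~7.4(d)]{Rockafellar97}), and here that supremum $F_0$ is already lsc, the epi-convergence of $(F_{\delta_n})$ to $F_0$ follows.

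The second step supplies the compactness needed to pass from epi-convergence to convergence of infima and minimizers. The key observation is that the minorant $\underline{F}$ of \eqref{e:Fdboundinf} is \emph{independent of} $\delta$: since $\psi_{s,\delta}\ge0$, one has $F_{\delta_n}\ge\underline{F}$ for every $n$, and $F_0\ge\underline{F}$ as well. As $\underline{F}$ was shown in the proof of Proposition~\ref{p:existmin} to be level-bounded, the inclusion $\operatorname{lev}_{\le\eta}F_{\delta_n}\subset\operatorname{lev}_{\le\eta}\underline{F}$ holds with a bounded right-hand side that does not depend on $n$; thus the family $(F_{\delta_n})$ is (uniformly, hence eventually) level-bounded, and $F_0$ is itself coercive with nonempty compact minimizer set. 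Applying \cite[Theorem~7.33]{Rockafellar97} to the proper lsc, epi-convergent, eventually level-bounded sequence $(F_{\delta_n})$ then yields at once $\inf F_{\delta_n}\to\inf F_0$, the boundedness of $(\widehat{\xb}_n)$, and the inclusion of all its cluster points in $\operatorname{Argmin}F_0$, establishing (i) and (ii). Assertion (iii) is then immediate: if $\operatorname{Argmin}F_0=\{\widetilde{\xb}\}$, the bounded sequence $(\widehat{\xb}_n)$ admits $\widetilde{\xb}$ as its only possible cluster point, and a bounded sequence with a single cluster point converges to it.

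The only genuinely delicate point is the epi-convergence step, that is the epi-$\liminf$ inequality $\liminf_n F_{\delta_n}(\xb_n)\ge F_0(\xb)$ for $\xb_n\to\xb$, together with the correct identification and lower semicontinuity of the limit $F_0$. Once monotonicity is exploited this reduces to the elementary estimate $F_{\delta_n}(\xb_n)\ge F_{\delta_m}(\xb_n)$ for $n\ge m$, followed by the lsc of the fixed function $F_{\delta_m}$ and the passage $m\to+\infty$; everything after that is soft. (Should one prefer a self-contained argument, the same ingredients give (i) directly: $\inf F_{\delta_n}$ is nondecreasing and bounded above by $\inf_{\xb}F_0(\xb)$, while the epi-$\liminf$ inequality applied along a convergent subsequence of minimizers forces the limit to equal $\inf F_0$.)
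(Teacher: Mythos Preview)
Your argument is correct and follows essentially the same route as the paper's proof: monotonicity of $(F_{\delta_n})$ combined with \cite[Proposition~7.4(d)]{Rockafellar97} to obtain epi-convergence to $F_0$, uniform level-boundedness via the $\delta$-independent minorant $\underline{F}$ from \eqref{e:Fdboundinf}, and then \cite[Theorem~7.33]{Rockafellar97} to conclude. Your treatment is slightly more explicit in verifying the lower semicontinuity of $F_0$ and in deriving (iii), but the structure and the key references are identical.
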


\begin{proof}
First, note that, according to Assumption~\ref{as:psideb}\ref{as:psidebii}, for every $n\in  \mathbb{N}$,
$F_{\delta_{n+1}} \ge F_{\delta_n}$. In addition, for every $n\in
\mathbb{N}$, $F_{\delta_n}$ is a continuous function as a consequence
of Assumptions \ref{a:existmin}\ref{a:existmini} and 
\ref{a:existmin}\ref{a:existminii}.
Then it can be deduced from
\cite[Theorem 7.4(d)]{Rockafellar97}
that $(F_{\delta_n})_{n\in \mathbb{N}}$ epi-converges
to $\sup_{n\in \mathbb{N}} F_{\delta_n}$. The latter function is equal to
$F_0$ by virtue of Assumption~\ref{as:psideb}\ref{as:psidebiii}.
In addition,  $(F_{\delta_n})_{n\in \mathbb{N}}$ is eventually
level-bounded\footnote{$(F_{\delta_n})_{n\in \mathbb{N}}$is eventually level-bounded if, for every $\eta\in \eR$, there exists some subset $\mathcal{N}$
of $\eN$ such that $\eN \setminus \mathcal{N}$ is finite and $\cup_{n\in
  \mathcal{N}}\operatorname{lev}_{\le \eta} F_{\delta_n}$ is bounded.}
 as a consequence of \cite[Ex. 7.32(a)]{Rockafellar97},
the lower bound in \eqref{e:Fdboundinf} and the fact that 
$\underline{F}\colon \xb \mapsto \Phi(\Hb \xb - \yb) + \|\Vb_0 \xb\|^2$
is level-bounded (as shown in the proof of Proposition~\ref{p:existmin}).
We complete the proof by noticing that $F_0$ is lower semicontinuous 
and proper, and by applying \cite[Theorem 7.33]{Rockafellar97}.
\end{proof}

The above proposition guarantees that a minimizer of $F_0$ can be
well-appro\-xi\-ma\-ted by choosing a small enough $\delta$.
Note that the existence/uniqueness of a minimizer of $F_0$ is
discussed in the literature on compressed sensing under some specific
assumptions \cite{Candes08,Davenport12,Donoho05,Nikolova13}.

We will now turn our attention to numerical methods allowing us to
efficiently solve Problem \eqref{Eq_CritJ} when all the involved functions are smooth.

\section{Proposed optimization method}

\label{Sec_MM}

 \subsection{Subspace algorithm}

A classical strategy to minimize the criterion $F_{\delta}$ 
consists of building a sequence
$(\xb_k)_{k\in \mathbb{N}}$ of $\eR^N$ such that
\begin{equation}
(\forall k \in  \mathbb{N})\qquad F_\delta(\xb_{k+1}) \le F_\delta(\xb_k).
\end{equation}
This can be performed by translating the current solution $\xb_k$ 
at each iteration $k\in \mathbb{N}$ along a suitable direction $\db_k\in \eR^N$:
\begin{equation}
\label{Eq_AlgoDescente}
\xb_{k+1} = \xb_k + \alpha_k \db_k, %\quad \sdm{for} \quad k=1,\ldots,K
\end{equation}
where $\alpha_k >0$ is the \emph{step size}, and $\db_k$ is a
\emph{descent direction}.
When $F_\delta$ is differentiable, this direction is chosen such that
 $\gb_k^\top \db_k \le 0$ where $\gb_k$ denotes the gradient of $F_{\delta}$ at $\xb_k$. 

A significant practical improvement regarding the convergence rate is
achieved by performing subspace acceleration, i.e. by 
considering a set of $M$ search directions 
$\{\db_k^1,\ldots,\db_k^{M}\}\subset \eR^N$ and 
by defining the new iteration as
\begin{equation}
\xb_{k+1} = \xb_k + \Db_k \sb_k, \label{Eq_IterSub} 
\end{equation} 
where $\Db_k= [\db_k^1,\ldots,\db_k^{M}]\in\eR^{N\times M}$ is the
search direction matrix and $\sb_k\in \eR^M$ is a multivariate
step size, which is computed so as to 
minimize 
\begin{equation}
f_{k,\delta}\colon \sb\mapsto F_{\delta}(\xb_k + \Db_k \sb). 
\end{equation} 
The memory gradient subspace algorithm, initially proposed in the late
1960's by Miele and Cantrell~\cite{Miele69}, corresponds to: 
\begin{equation}
(\forall k \ge 1)\qquad \Db_k=[-\gb_k \;\left|\right.\; \xb_k-\xb_{k-1}]. 
\end{equation}
When the objective function is quadratic, this algorithm is equivalent to the
linear conjugate gradient algorithm~\cite{Cantrell69}. More
recently, several other subspace algorithms have been proposed, where,
at each iteration $k\in \eN$, $\Db_k$ usually includes a descent
direction (e.g. gradient, Newton, truncated Newton directions) and a short history of previous directions (see \cite[Tab.1]{Chouzenoux11} for a general review). 

In addition, the subspace scheme~\eqref{Eq_IterSub} was shown to
outperform standard descent algorithms such as nonlinear conjugate gradient over a set of PLS minimization
problems in~\cite{Chouzenoux11,Zibulevsky10}. The convergence of 
Algorithm~\eqref{Eq_IterSub} however requires the design of a
proper strategy to determine the step sizes $(\sb_k)_{k\in\eN}$, which we discuss in the next section. 

\subsection{Majorize-Minimize step size}

At each iteration $k\in \eN$,
the minimization of $f_{k,\delta}$ using the Majorization-Minimization
(MM) principle is approximately performed by successive
minimizations of tangent majorant functions for $f_{k,\delta}$. 
Let $q_k \colon \eR^M \times \eR^M \to \eR$ and let $\sb'\in \eR^M$.
The function
$q_k(.,\sb')$ is said to be a tangent majorant for $f_{k,\delta}$ at
$\sb'$ if
\begin{equation}
\begin{cases}
(\forall \sb \in \eR^M) \quad
q_k(\sb,\sb') \ge f_{k,\delta}(\sb)\\
q_k(\sb',\sb') =  f_{k,\delta}(\sb').
\end{cases}
\label{Eq_PropMaj}
\end{equation}

From this point forward, we assume that $f_{k,\delta}$ is differentiable.
Following~\cite{Chouzenoux11}, we propose to employ a convex quadratic tangent majorant function of the form:
\begin{equation}
\label{Eq_Majorante}
(\forall \sb\in \eR^M)\quad
q_k(\sb,\sb') = f_{k,\delta}(\sb') + \nabla f_{k,\delta}(\sb')^\top (\sb - \sb') + \fracp{1}{2} (\sb - \sb')^\top \Bb_{k,\sb'}  (\sb - \sb'),
\end{equation}
where $\nabla f_{k,\delta}(\sb')$ denotes the derivative of
$f_{k,\delta}$ at $\sb'$, and $\Bb_{k,\sb'}$ is an $M \times M$
symmetric positive semi-definite matrix that ensures the fulfillment
of majorization properties \eqref{Eq_PropMaj}. The initial
minimization of $f_{k,\delta}$ is replaced by a sequence of easier
subproblems, corresponding to the following MM update rule:
\begin{equation}
\left\{
\begin{aligned}
\sb_k^0 & =  \zerob,\\
\forall j&\in \{1,\,\ldots,\,J\}\\
&\left\lfloor
\begin{array}{l}
\sb_k^{j} \in \underset{\sb\in\eR^M}{\operatorname{Argmin}}\;\; q_k(\sb,\sb_k^{j-1})\\
\end{array}
\right.
%\\
%\sb_k & =  \sb_k^{J}.
\end{aligned}
\right.
\label{Eq_MMRec}
\end{equation} 
\newtext[Note that for $M=1$,  this reduces to the scalar MM line search~\cite{Labat08}.]

\subsection{Construction of the majorizing approximation}

We now make the following assumption:
\begin{assumption}\label{a:conv}
\begin{enumerate} 
\item\label{a:convi} $\Phi$ is differentiable with an $L$-Lipschitzian gradient,
 i.e.
\begin{equation}
( \forall \zb\in \eR^Q)(\forall \zb' \in \eR^Q)\,\,
\|\nabla \Phi(\zb)-\nabla\Phi(\zb')\| \le L \| \zb - \zb'\|.
\end{equation}
\item \label{a:convii} For every $s \in \left\{1,\cdots,S\right\}$,
  $\psi_{s,\delta}$ is a differentiable function.
\item \label{a:conviii} For every $s\in \left\{1,\cdots,S\right\}$, $\psi_{s,\delta}(\sqrt{.})$ is concave on $[0,+\infty)$.
\item \label{a:conviv} For every $s \in \left\{1,\cdots,S\right\}$, there exists $\overline{\omega_s} \in [0,+\infty)$ such that
$(\forall t \in (0,+\infty))$ $0 \le  \dot{\psi}_{s,\delta}(t) \le
\overline{\omega_s} t$ where $\dot{\psi}_{s,\delta}$ is the
derivative of $\psi_{s,\delta}$. In addition,
$\lim_{\underset{t\neq 0}{t \to 0}}  \dot{\psi}_{s,\delta}(t)/t \in \eR$.
\end{enumerate}
\label{as:Maj}
\end{assumption}
We emphasize the fact that Assumptions
\ref{a:conv}\ref{a:convii}-\ref{a:conviv} hold for the
$\ell_2$-$\ell_0$ penalties in Examples~\ref{ex:psi}\ref{ex:psi1}-\ref{ex:psi4}. Morever, Tab.~\ref{Tab:DataFidel} presents several examples of functions fulfilling Assumption~\ref{a:conv}\ref{a:convi}. 

\begin{table}[t]
\centering
\renewcommand{\arraystretch}{1.2}
\begin{tabular}{|c|c|c|} 
\hline
Function name & $\Phi(\zb)$ & Lipschitz\\ %& Ref. 
& $\zb = (z_q)_{1\le q\le Q} \in\eR^Q$ & constant $L$ \\%& 
\hline
\hline
Least squares &  $\frac12 \zb^\top \Lambda \zb$ & $\|\Lambda\|$ \\
  & $\Lambda\in \eR^{Q\times Q}$ symmetric positive semi-definite  &
   \\
\hline
$\ell_2$- $\ell_1$ & $\sum_{q=1}^Q \phi_q(z_q)$ & $\max_{1\le q\le Q}(\frac{1}{\sqrt{ \rho_q}})$ \\
\cite{Vogel96} & $(\forall t \in \eR)$ $\phi_q(t) = \sqrt{\rho_q  + t^2}$, $\rho_q > 0$  & \\
\hline
Huber &  $\sum_{q=1}^Q \phi_q(z_q)$ & $2 \max_{1\le q\le Q}\rho_q$ \\
\cite{Huber81} & $(\forall t \in \eR)$ $\phi_q(t) = \begin{cases}
\rho_q t^2 & \mbox{if $|t|\le \nu_q$}\\
\rho_q \nu_q(2 |t|-\nu_q|) & \mbox{if  $|t| > \nu_q$}
\end{cases}$ & \\
& $\nu_q > 0$, $\rho_q > 0$ &  \\
\hline
Cauchy & $\sum_{q=1}^Q \phi_q(z_q)$ & $\max_{1\le q\le Q}(\frac{2}{\rho_q})$  \\
\cite{Anto02} & $(\forall t \in \eR)$ $\phi_q(t) = \ln(\rho_q+t^2)$, $\rho_q > 0$ &  \\
\hline
Squared distance to & $\frac12 d_B^2(\zb)$ &  1\\
 a closed convex set $B$ \cite{Bauschke11} & &  \\
\hline
Smoothed max \cite{BenTal89b} & $ \rho \ln(\sum_{q=1}^Q e^{z_q / \rho})$, $\rho > 0$  & $1/\rho$  \\
\hline
Inf-convolution & $\inf_{\zb_1+\zb_2 = \zb}
\Phi_1(\zb_1)+\Phi_2(\zb_2)$ & $\rho$ \\
\cite{Bauschke11} & $\Phi_1\in \Gamma_0(\eR^Q)$,
$\Phi_2\in \Gamma_0(\eR^Q)$ & \\
&  $\Phi_2$ $\rho$-Lipschitz differentiable, $\rho > 0$, & \\
& such that $\lim_{\| \zb \| \to +\infty} \frac{\Phi_2(\zb)}{\|\zb\|} = +\infty$
& \\
\hline
\end{tabular}
\caption{Some examples of functions $\Phi$ with an $L$-Lipschitzian gradient. ($\|\Lambda\|$
  denotes the spectral norm of $\Lambda$ and $\Gamma_0(\eR^Q)$ denotes the class of proper
  lower-semicontinuous convex functions from $\eR^Q$ to $(-\infty,+\infty]$.)
}
\label{Tab:DataFidel}
\end{table}

By defining
\begin{equation}\label{e:defomega}
(\forall s \in \{1,\ldots,S\}) (\forall t \in \eR) \quad \omega_{s,\delta}(t) =
\dot{\psi}_{s,\delta}(t)/t,
\end{equation}
(the function $\omega_{s,\delta}$ is extended by continuity
at $0$), a tangent majorant can be built as described below:
\begin{lemma}{\rm \cite{Allain06}}
For every $\xb\in \eR^N$, let 
\begin{equation}%\begin{array}{rcl}
\Ab(\xb) = \mu \Hb^\top \Hb + 2 \Vb_0^\top \Vb_0+ \Vb^\top
\Diag\left\{\bb(\xb)\right\}\Vb,
\label{Eq_A}
\end{equation}
where $\mu \in [L,+\infty)$ and
$\bb(\xb)= \big(b_i(\xb)\big)_{1\le i \le SP}\in \eR^{SP}$
with $P = \sum_{s = 1}^S P_s$ is such that
\begin{equation}
(\forall s \in \{1,\ldots,S\})\, (\forall p \in \{1,\ldots,P_s\})\quad
b_{P_1+\cdots+ P_{s-1}+p}(\xb)=\omega_{s,\delta}(\| \Vb_s \xb-\cb_s\|).
\end{equation}
Let $\sb'\in \eR^M$ and $k\in \eN$.
Then,  under Assumption \ref{a:conv}, $q_k(\cdot,\sb')$ with
\begin{equation}\label{e:Bku}
\Bb_{k,\sb'} = \Db_k^\top \Ab(\xb_k + \Db_k \sb') \Db_k,
\end{equation}
is a convex quadratic tangent majorant of $f_{\delta,k}$ at $\sb'$.
\end{lemma}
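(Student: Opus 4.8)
The plan is to lift the majorization from the subspace coordinates back to the image variable, prove it there, and then restrict. Writing $\xb' = \xb_k + \Db_k \sb'$ and substituting $\xb = \xb_k + \Db_k\sb$, one has $f_{k,\delta}(\sb) = F_\delta(\xb)$, $f_{k,\delta}(\sb') = F_\delta(\xb')$, $\xb - \xb' = \Db_k(\sb - \sb')$, and, by the chain rule, $\nabla f_{k,\delta}(\sb') = \Db_k^\top \nabla F_\delta(\xb')$. Hence, once I establish the quadratic majorization
\begin{equation}
(\forall \xb \in \eR^N)\quad F_\delta(\xb) \le F_\delta(\xb') + \nabla F_\delta(\xb')^\top(\xb - \xb') + \frac12 (\xb - \xb')^\top \Ab(\xb')(\xb - \xb'),
\label{e:FdMaj}
\end{equation}
specializing it to $\xb = \xb_k + \Db_k\sb$ and using $\Bb_{k,\sb'} = \Db_k^\top \Ab(\xb')\Db_k$ yields $q_k(\sb,\sb') \ge f_{k,\delta}(\sb)$ for every $\sb$, while the interpolation condition $q_k(\sb',\sb') = f_{k,\delta}(\sb')$ is immediate from \eqref{Eq_Majorante}. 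So everything reduces to proving \eqref{e:FdMaj}, which I would do by majorizing the three summands of $F_\delta$ separately.

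For the data term, the $L$-Lipschitz continuity of $\nabla\Phi$ (Assumption \ref{a:conv}\ref{a:convi}) gives, via the standard descent lemma applied to $\zb = \Hb\xb - \yb$ and $\zb' = \Hb\xb' - \yb$,
\begin{equation}
\Phi(\Hb\xb - \yb) \le \Phi(\Hb\xb' - \yb) + \nabla\Phi(\Hb\xb' - \yb)^\top \Hb(\xb - \xb') + \frac{L}{2}\|\Hb(\xb - \xb')\|^2,
\end{equation}
and since $\mu \ge L$ and $\|\Hb(\xb-\xb')\|^2 = (\xb-\xb')^\top\Hb^\top\Hb(\xb-\xb')$, the curvature term is dominated by $\tfrac{\mu}{2}(\xb - \xb')^\top \Hb^\top\Hb(\xb - \xb')$, producing the $\mu\Hb^\top\Hb$ block of $\Ab(\xb')$. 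The elastic-net term $\xb \mapsto \|\Vb_0\xb\|^2$ is quadratic, so its exact second-order Taylor expansion about $\xb'$ is already of the required form with curvature matrix $2\Vb_0^\top\Vb_0$ and with equality, hence a trivial majorant.

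The core of the argument is the half-quadratic majorization of each edge-preserving term, which rests on Assumption \ref{a:conv}\ref{a:conviii}: the map $u \mapsto \psi_{s,\delta}(\sqrt u)$ is concave on $[0,+\infty)$, hence lies below each of its tangents. Applying this tangent inequality at $u' = \|\Vb_s\xb' - \cb_s\|^2$, evaluating at $u = \|\Vb_s\xb - \cb_s\|^2$, and noting that the derivative of $u \mapsto \psi_{s,\delta}(\sqrt u)$ at $u'$ equals $\tfrac12\omega_{s,\delta}(\|\Vb_s\xb' - \cb_s\|)$, I obtain
\begin{equation}
\psi_{s,\delta}(\|\Vb_s\xb - \cb_s\|) \le \psi_{s,\delta}(\|\Vb_s\xb' - \cb_s\|) + \tfrac12\omega_{s,\delta}(\|\Vb_s\xb' - \cb_s\|)\big(\|\Vb_s\xb - \cb_s\|^2 - \|\Vb_s\xb' - \cb_s\|^2\big).
\end{equation}
Expanding $\|\Vb_s\xb - \cb_s\|^2 - \|\Vb_s\xb' - \cb_s\|^2 = 2(\Vb_s\xb' - \cb_s)^\top\Vb_s(\xb - \xb') + \|\Vb_s(\xb - \xb')\|^2$, the linear part reproduces exactly the gradient contribution $\omega_{s,\delta}(\|\Vb_s\xb' - \cb_s\|)\Vb_s^\top(\Vb_s\xb' - \cb_s)$ of this summand, while the quadratic part contributes $\tfrac12\omega_{s,\delta}(\|\Vb_s\xb' - \cb_s\|)(\xb - \xb')^\top\Vb_s^\top\Vb_s(\xb - \xb')$. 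Summing over $s$ and recognizing, from the block-constant structure of $\bb(\xb')$, that $\sum_{s}\omega_{s,\delta}(\|\Vb_s\xb' - \cb_s\|)\Vb_s^\top\Vb_s = \Vb^\top\Diag\{\bb(\xb')\}\Vb$, I recover the last block of $\Ab(\xb')$ and complete \eqref{e:FdMaj}.

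Finally, convexity of $q_k(\cdot,\sb')$ follows because $\Ab(\xb')$ is a sum of positive semidefinite matrices: $\mu\Hb^\top\Hb$ and $2\Vb_0^\top\Vb_0$ are Gram matrices with nonnegative weights ($\mu \ge L \ge 0$), and $\Vb^\top\Diag\{\bb(\xb')\}\Vb$ is positive semidefinite since Assumption \ref{a:conv}\ref{a:conviv} forces $\omega_{s,\delta}\ge 0$; therefore $\Bb_{k,\sb'} = \Db_k^\top\Ab(\xb')\Db_k$ is positive semidefinite as well. The step I expect to require the most care is the legitimacy of the chain-rule and gradient manipulations at points where some $\Vb_s\xb' - \cb_s$ vanishes: there $\omega_{s,\delta}$ must be read through its continuous extension at $0$, whose existence is precisely guaranteed by the limit condition $\lim_{t\to 0,\,t\neq 0}\dot\psi_{s,\delta}(t)/t \in \eR$ in Assumption \ref{a:conv}\ref{a:conviv}. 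This makes the corresponding gradient term vanish and keeps $\bb(\xb')$ finite, so both the majorization and its curvature matrix remain well defined everywhere.
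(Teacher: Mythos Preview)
Your argument is correct and is exactly the standard half-quadratic construction: majorize $\Phi\circ\Hb$ via the descent lemma, keep the exact quadratic for $\|\Vb_0\cdot\|^2$, and use concavity of $u\mapsto\psi_{s,\delta}(\sqrt u)$ to get a quadratic upper bound on each $\psi_{s,\delta}(\|\Vb_s\cdot-\cb_s\|)$, then restrict to the subspace. The paper does not actually prove this lemma; it simply cites it from \cite{Allain06}, so there is no in-paper proof to compare against, but your derivation is precisely the one that reference develops. Your care about the case $\Vb_s\xb'-\cb_s=\zerob$ (handled by the continuous extension of $\omega_{s,\delta}$ at $0$ guaranteed by Assumption~\ref{a:conv}\ref{a:conviv}) is also appropriate and often glossed over.
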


Hence, according to~\eqref{Eq_Majorante} and \eqref{Eq_MMRec}, the optimality condition for
the choice of the step size in the MM iteration is given by:
\begin{equation}
(\forall k\in \eN)(\forall j\in \{1,\ldots,J\})\quad
 \Bb_{k,\sb_k^{j-1}} (\sb_k^{j} -\sb_k^{j-1}) + \nabla f_{k,\delta}(\sb_k^{j-1}) = \mathbf{0}.
\label{Eq_PAS}
\end{equation}
This yields the explicit step size formula
\begin{equation}
\sb_k^j = \sb_k^{j-1} - \, \Bb_{k,\sb_k^{j-1}}^{-1}\, \nabla
f_{k,\delta}(\sb_k^{j-1}),
\end{equation}
where $\Bb_{k,\sb_k^{j-1}}^{-1}$ is the pseudo-inverse of
$\Bb_{k,\sb_k^{j-1}}\in \eR^{M\times M}$. One of the main advantages of
this approach is that the computational cost of the
required inversion is low, provided that the number $M$ of search directions remains
small.
The resulting MM subspace algorithm reads
\begin{equation}
\label{e:blackpage} 
\left\{
\begin{array}{l}
\xb_0\in \eR^N,\\
\forall k \in \eN\\
\begin{array}{l}
\left\lfloor
\begin{array}{l}
\sb_k^0  =  \zerob,\\
\forall j\in \{1,\,\ldots,\,J\}\\
\left\lfloor
\begin{array}{l}
\Bb_{k,\sb_k^{j-1}} = \Db_k^\top \Ab(\xb_k + \Db_k \sb_k^{j-1}) \Db_k,\\
\sb_k^j = \sb_k^{j-1} - \, \Bb_{k,\sb_k^{j-1}}^{-1}\,\Db_k^\top 
\nabla F_{k,\delta}(\xb_k + \Db_k \sb_k^{j-1}),
\end{array}
\right.\\[1mm]
\xb_{k+1} = \xb_k + \Db_k \sb_k^J.
\end{array}
\right.\\
\end{array}
\end{array}
\right.
\end{equation}

%%Let us also assume that the set of directions $\Db_k$ fulfills the following condition:
%%\begin{itemize}
%% 	\item[(H4)] for all $k\geq 0$, the matrix of directions $\Db_k$ is of size $N \times M$ with $1\leq M \leq N$ and the first subspace direction $\db_k^1$ fulfills 
%%	\begin{align}
%%\gb_k^T \db_k^1 & \leq - \gamma_0 \| \gb_k \|^2,\label{Eq_GR1}\\
%%\| \db_k^1 \| & \leq \gamma_1 \| \gb_k \|,\label{Eq_GR2}
%%\end{align}
%%with $\gamma_0,\gamma_1 > 0$.

\section{Convergence result}

\label{Sec_CV}

We first provide some preliminary technical lemmas before stating our main convergence result.
In the following, for every $k \in \mathbb{N}$ and $j\in \{0,\ldots,J\}$, we define
\begin{align}
\xb_k^j &= \xb_k +  \Db_k \sb_k^j,\label{Eq_SubIter}\\ 
\gb_k^j &= \nabla F_\delta(\xb_k^j),
\end{align}
(thus, $\xb_k^J = \xb_{k+1}$ and $\gb_k^J = \gb_{k+1}$). Moreover, we assume that the set of
directions $(\Db_k)_{k\in \eN}$ fulfills the following condition:

\begin{assumption}
For every $k \in \eN$, the matrix of directions $\Db_k$ is of size $N \times M$ with $1\le M \le N$ and the first subspace direction $\db_k^1$ is gradient-related i.e.,
	\begin{align}
\gb_k^\top \db_k^1 & \le - \gamma_0 \| \gb_k \|^2,\label{Eq_GR1}\\
\| \db_k^1 \| & \le \gamma_1 \| \gb_k \|,\label{Eq_GR2}
\end{align}
with $\gamma_0> 0$ and $\gamma_1 > 0$.
\label{as:GR}
\end{assumption}

As emphasized in~\cite[Sec.1.2]{Bertsekas99} and~\cite[Sec.III-D]{Chouzenoux11},
conditions \eqref{Eq_GR1} and \eqref{Eq_GR2} hold for a large family of descent
directions, such as the steepest descent direction or the truncated Newton direction.

\subsection{Preliminary results}\

\begin{lemma}\label{le:sufdec}
Under Assumptions~\ref{as:Maj} and~\ref{as:GR}, there exists a constant $\nu>0$ such that, for every $k\in \eN$ and $j\in \{1,\ldots,J\}$,
$F_{\delta}(\xb_k)-F_{\delta}(\xb_k^j) \ge \frac{\gamma_0^2}{\gamma_1^2}\nu^{-1}\|\gb_k \|^2$.
\end{lemma}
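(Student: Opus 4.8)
The plan is to reduce the claim to the control of a single MM inner step and then to lower-bound that step by comparing the minimizer $\sb_k^1$ of the quadratic majorant against a well-chosen trial point along the first search direction $\db_k^1$.

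First I would exploit the monotonicity of the inner MM recursion. By the tangent majorant property \eqref{Eq_PropMaj} and the fact that $\sb_k^j$ minimizes $q_k(\cdot,\sb_k^{j-1})$, one has for every $j\ge 1$
\[
f_{k,\delta}(\sb_k^j)\le q_k(\sb_k^j,\sb_k^{j-1})\le q_k(\sb_k^{j-1},\sb_k^{j-1})=f_{k,\delta}(\sb_k^{j-1}),
\]
so the finite sequence $\big(f_{k,\delta}(\sb_k^j)\big)_{0\le j\le J}$ is nonincreasing. Since $F_\delta(\xb_k^j)=f_{k,\delta}(\sb_k^j)$ and $\xb_k=\xb_k^0$, it follows that $F_\delta(\xb_k)-F_\delta(\xb_k^j)\ge F_\delta(\xb_k)-F_\delta(\xb_k^1)$ for every $j\in\{1,\ldots,J\}$, and it therefore suffices to prove the bound for $j=1$.

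Next I would establish a uniform spectral bound on $\Ab(\cdot)$. By Assumption~\ref{a:conv}\ref{a:conviv}, for each $s$ the function $\omega_{s,\delta}=\dot{\psi}_{s,\delta}(\cdot)/(\cdot)$ takes values in $[0,\overline{\omega_s}]$ (extended by continuity at $0$), so the diagonal entries $b_i(\xb)$ lie in $[0,\overline{\omega}]$ with $\overline{\omega}=\max_{1\le s\le S}\overline{\omega_s}$. Hence $\Diag\{\bb(\xb)\}\preceq\overline{\omega}\,\Ib$, and from \eqref{Eq_A}
\[
(\forall\xb\in\eR^N)\qquad \zerob\preceq\Ab(\xb)\preceq\overline{\Ab}:=\mu\Hb^\top\Hb+2\Vb_0^\top\Vb_0+\overline{\omega}\,\Vb^\top\Vb .
\]
Letting $\overline{\lambda}$ be the largest eigenvalue of the fixed matrix $\overline{\Ab}$, we obtain $\Ab(\xb)\preceq\overline{\lambda}\,\Ib$ uniformly in $\xb$ and $k$. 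I would then lower-bound the first-step decrease by a comparison argument that avoids the (possibly singular) pseudo-inverse. Writing $\mathbf{e}_1\in\eR^M$ for the first canonical vector, so that $\Db_k\mathbf{e}_1=\db_k^1$, and using $\sb_k^0=\zerob$, $\nabla f_{k,\delta}(\zerob)=\Db_k^\top\gb_k$ and $\Bb_{k,\zerob}=\Db_k^\top\Ab(\xb_k)\Db_k$, the minimality of $\sb_k^1$ gives, for every $\theta>0$,
\[
F_\delta(\xb_k)-F_\delta(\xb_k^1)\ge q_k(\zerob,\zerob)-q_k(\theta\mathbf{e}_1,\zerob)=-\theta\,\gb_k^\top\db_k^1-\tfrac{\theta^2}{2}(\db_k^1)^\top\Ab(\xb_k)\db_k^1 .
\]
Applying \eqref{Eq_GR1}, \eqref{Eq_GR2} and the spectral bound, the right-hand side is at least $\big(\theta\gamma_0-\tfrac{\theta^2}{2}\overline{\lambda}\gamma_1^2\big)\|\gb_k\|^2$; maximizing this scalar quadratic at $\theta=\gamma_0/(\overline{\lambda}\gamma_1^2)$ yields $\tfrac{\gamma_0^2}{2\overline{\lambda}\gamma_1^2}\|\gb_k\|^2$, and setting $\nu=2\overline{\lambda}$ gives the stated inequality.

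I expect the main subtlety to be the correct treatment of the possibly rank-deficient matrix $\Bb_{k,\sb'}$: rather than invoking the explicit pseudo-inverse update, comparing $\sb_k^1$ against the single trial point $\theta\mathbf{e}_1$ sidesteps any rank condition and uses only that $\sb_k^1$ minimizes the majorant. The remaining care is to confirm that $\overline{\Ab}\neq\zerob$ (guaranteed since $\Hb\neq\zerob$ and $\mu\ge L$) so that $\nu>0$, and that the uniform bound on $\omega_{s,\delta}$ does follow from the linear growth condition $0\le\dot{\psi}_{s,\delta}(t)\le\overline{\omega_s}\,t$ together with the finite limit of $\dot{\psi}_{s,\delta}(t)/t$ at $0$.
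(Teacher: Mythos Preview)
Your argument is correct and follows the same underlying route as the paper: both obtain a uniform upper bound on $\Ab(\cdot)$ from the boundedness of $\omega_{s,\delta}$ in Assumption~\ref{a:conv}\ref{a:conviv}, and then exploit the gradient-relatedness conditions \eqref{Eq_GR1}--\eqref{Eq_GR2}. The paper's own proof is extremely terse---it records the spectral bound $\vb^\top\Ab(\xb)\vb\le\nu\|\vb\|^2/2$ and then invokes \cite[Theorem~1]{Chouzenoux11} for the rest---whereas your reduction to $j=1$ via MM monotonicity and your comparison of $\sb_k^1$ against the trial point $\theta\mathbf{e}_1$ make that cited step self-contained, with $\nu=2\overline{\lambda}$ matching the paper's convention.
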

\begin{proof}
According to Assumption \ref{a:conv}\ref{a:conviv} and Eq. \eqref{e:defomega},
for every $s \in \left\{1,\cdots,S\right\}$, 
$\omega_{s,\delta}$ is upper-bounded on $(0,+\infty)$.
Hence, there exists $\nu>0$ such that, for every $\xb \in \eR^N$ and
$\vb \in \eR^N$, $\vb^\top A(\xb) \vb \le \nu \| \vb\|^2/2$. The
result then follows from \cite[Theorem 1]{Chouzenoux11}. 
\end{proof}

\begin{lemma}\label{le:sufdec2}
Under Assumptions~\ref{a:existmin} and~\ref{as:Maj}, the MM subspace iterates 
%for all $J\ge 1$ 
are such that
\begin{align}\label{eq:sufdec}
(\forall k \in \mathbb{N})(\forall j \in \{0,\ldots,J-1\})\qquad
 F_\delta(\xb_k^j)-F_\delta(\xb_k^{j+1}) & \ge \frac{\eta}{2} \| \xb_k^{j+1} - \xb_k^j\|^2
\end{align}
where $\eta > 0$ is the smallest eigenvalue of $\mu\Hb^\top\Hb + 2\Vb_0^\top\Vb_0$.
\end{lemma}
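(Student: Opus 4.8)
The plan is to convert the qualitative monotonicity of the Majorize--Minimize inner loop into a quantitative descent estimate, by exploiting that each inner iterate is an \emph{exact} minimizer of a convex quadratic. Fix $k \in \mathbb{N}$ and $j \in \{0,\ldots,J-1\}$. Recalling from \eqref{Eq_SubIter} that $\xb_k^j = \xb_k + \Db_k \sb_k^j$, so that $F_\delta(\xb_k^j) = f_{k,\delta}(\sb_k^j)$ and $\xb_k^{j+1}-\xb_k^j = \Db_k(\sb_k^{j+1}-\sb_k^j)$, I would first invoke the preceding lemma of \cite{Allain06}: the function $q_k(\cdot,\sb_k^j)$ is a tangent majorant of $f_{k,\delta}$ at $\sb_k^j$ whose curvature matrix is $\Bb_{k,\sb_k^j} = \Db_k^\top \Ab(\xb_k^j) \Db_k$. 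In particular $f_{k,\delta}(\sb_k^{j+1}) \le q_k(\sb_k^{j+1},\sb_k^j)$, while $q_k(\sb_k^j,\sb_k^j) = f_{k,\delta}(\sb_k^j)$ by tangency.

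Next I would compute the \emph{exact} decrease of the majorant. Since $\sb_k^{j+1}$ minimizes the convex quadratic $q_k(\cdot,\sb_k^j)$, the optimality condition \eqref{Eq_PAS} gives $\Bb_{k,\sb_k^j}(\sb_k^{j+1}-\sb_k^j) = -\nabla f_{k,\delta}(\sb_k^j)$. Substituting this into the explicit form \eqref{Eq_Majorante} of $q_k$ and using tangency, the cross term cancels half of the quadratic term, leaving
\[
F_\delta(\xb_k^j) - F_\delta(\xb_k^{j+1}) \ge f_{k,\delta}(\sb_k^j) - q_k(\sb_k^{j+1},\sb_k^j) = \frac{1}{2}(\sb_k^{j+1}-\sb_k^j)^\top \Bb_{k,\sb_k^j}(\sb_k^{j+1}-\sb_k^j),
\]
where the first inequality is the majorization bound $F_\delta(\xb_k^{j+1}) = f_{k,\delta}(\sb_k^{j+1}) \le q_k(\sb_k^{j+1},\sb_k^j)$.

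It then remains to lower-bound this curvature-weighted step norm by $\eta \|\xb_k^{j+1}-\xb_k^j\|^2$. Using $\Bb_{k,\sb_k^j} = \Db_k^\top\Ab(\xb_k^j)\Db_k$ together with $\Db_k(\sb_k^{j+1}-\sb_k^j) = \xb_k^{j+1}-\xb_k^j$, the quadratic form equals $\frac{1}{2}(\xb_k^{j+1}-\xb_k^j)^\top \Ab(\xb_k^j)(\xb_k^{j+1}-\xb_k^j)$. By \eqref{Eq_A}, $\Ab(\xb_k^j) = \mu\Hb^\top\Hb + 2\Vb_0^\top\Vb_0 + \Vb^\top\Diag\{\bb(\xb_k^j)\}\Vb$, and since each entry of $\bb(\xb_k^j)$ equals some $\omega_{s,\delta}(\|\Vb_s\xb_k^j-\cb_s\|) \ge 0$ (nonnegative by Assumption \ref{a:conv}\ref{a:conviv} and \eqref{e:defomega}), the term $\Vb^\top\Diag\{\bb(\xb_k^j)\}\Vb$ is positive semi-definite and may be discarded. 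This leaves $\frac{1}{2}\big(\mu\|\Hb(\xb_k^{j+1}-\xb_k^j)\|^2 + 2\|\Vb_0(\xb_k^{j+1}-\xb_k^j)\|^2\big)$, which is at least $\frac{\eta}{2}\|\xb_k^{j+1}-\xb_k^j\|^2$.

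The one point deserving genuine care --- and the only place where a standing hypothesis is really essential --- is the strict positivity of $\eta$. The matrix $\mu\Hb^\top\Hb + 2\Vb_0^\top\Vb_0$ is symmetric positive semi-definite, and (with $\mu>0$) its quadratic form $\mu\|\Hb\xb\|^2 + 2\|\Vb_0\xb\|^2$ vanishes precisely when $\xb \in \operatorname{Ker}\Hb \cap \operatorname{Ker}\Vb_0$. By Assumption \ref{a:existmin}\ref{a:existminiii} this intersection reduces to $\{\mathbf{0}\}$, so the matrix is in fact positive definite and its smallest eigenvalue $\eta$ is strictly positive, which closes the argument.
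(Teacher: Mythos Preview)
Your proof is correct and follows essentially the same route as the paper's: both use the tangent-majorant property together with the optimality condition \eqref{Eq_PAS} to obtain $F_\delta(\xb_k^j)-F_\delta(\xb_k^{j+1}) \ge \tfrac12 (\xb_k^{j+1}-\xb_k^j)^\top \Ab(\xb_k^j)(\xb_k^{j+1}-\xb_k^j)$, and then bound $\Ab(\xb_k^j)$ below by $\eta\,\Ib$ via Assumption~\ref{a:existmin}\ref{a:existminiii}. Your version is in fact slightly more explicit than the paper's in justifying why the $\Vb^\top\Diag\{\bb(\xb_k^j)\}\Vb$ term can be dropped (nonnegativity of $\omega_{s,\delta}$ from Assumption~\ref{a:conv}\ref{a:conviv}) and why $\eta>0$.
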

\begin{proof}
Let $k\in \eN$ and $j \in \{0,\ldots,J-1\}$.
According to \eqref{Eq_Majorante} and 
the definition of $\sb_k^{j+1}$,
\begin{equation}
f_{k,\delta}(\sb_k^j) - q_k(\sb_k^{j+1},\sb_k^j) = - \frac{1}{2} \nabla f_{k,\delta}(\sb_k^j)^\top (\sb_k^{j+1}-\sb_k^j).
\end{equation}
Furthermore, $q_k(\sb_k^{j+1},\sb^j) \ge f_{k,\delta}(\sb_k^{j+1})$. Thus,
\begin{equation}
f_{k,\delta}(\sb_k^j) - f_{k,\delta}(\sb_k^{j+1}) \ge - \frac{1}{2} \nabla f_{k,\delta}(\sb^j)^\top (\sb_k^{j+1}-\sb_k^j).
\end{equation}
The last inequality also reads
\begin{equation}
F_\delta(\xb_k^j) - F_\delta(\xb_k^{j+1}) \ge - \frac{1}{2}
\nabla f_{k,\delta}(\sb^j)^\top (\sb_k^{j+1}-\sb_k^j).
\end{equation}
So, using \eqref{e:Bku} and \eqref{Eq_PAS},
\begin{align}
F_\delta(\xb_k^j) - F_\delta(\xb_k^{j+1}) &\ge \frac{1}{2} \big(\Db_k (\sb_k^{j+1}-\sb_k^j)\big)^\top\Ab(\xb_k^j) \Db_k (\sb_k^{j+1}-\sb_k^j)\\
&\ge \frac{\eta}{2} \| \Db_k (\sb_k^{j+1}-\sb_k^j)\|^2.
\end{align}
In the latter inequality, we make use of the fact that, 
since $\sdm{Ker}\Hb \cap \sdm{Ker}\Vb_0= \left\{\zerob\right\}$, 
$\eta$ 
is positive, 
and 
\begin{equation}\label{e:mineigA}
(\forall \xb \in \eR^N)(\forall \vb \in \eR^N)\qquad
\vb^\top A(\xb) \vb \ge \eta \| \vb\|^2.
\end{equation}
\end{proof}

\begin{lemma}\label{le:reler}
Under Assumptions~\ref{a:existmin} and~\ref{as:Maj},
the MM subspace iterates are such that
\begin{align}\label{eq:reler}
(\forall k \in \mathbb{N})(\forall j \in \{0,\ldots,J-1\})\qquad
\eta \| \xb_{k}^{j+1} - \xb_k^j\| & \le \| \gb_k^j \|,
\end{align}
where $\eta > 0$ is the same constant as in Lemma \ref{le:sufdec2}.
\end{lemma}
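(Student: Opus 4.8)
The plan is to exploit the optimality condition \eqref{Eq_PAS} satisfied by each inner iterate $\sb_k^{j+1}$, converting it into a quadratic form that the lower eigenvalue bound \eqref{e:mineigA} controls. I would first record two elementary identities. From definition \eqref{Eq_SubIter}, $\xb_k^{j+1}-\xb_k^j = \Db_k(\sb_k^{j+1}-\sb_k^j)$; and applying the chain rule to $f_{k,\delta}\colon \sb\mapsto F_\delta(\xb_k+\Db_k\sb)$ gives $\nabla f_{k,\delta}(\sb_k^j)=\Db_k^\top \nabla F_\delta(\xb_k^j)=\Db_k^\top\gb_k^j$.

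Then I would invoke the optimality condition \eqref{Eq_PAS} at inner step $j$, namely $\Bb_{k,\sb_k^j}(\sb_k^{j+1}-\sb_k^j)+\nabla f_{k,\delta}(\sb_k^j)=\zerob$, and substitute $\Bb_{k,\sb_k^j}=\Db_k^\top\Ab(\xb_k^j)\Db_k$ from \eqref{e:Bku} together with the chain-rule identity. Left-multiplying the resulting equation by $(\sb_k^{j+1}-\sb_k^j)^\top$ and using the first identity to replace $\Db_k(\sb_k^{j+1}-\sb_k^j)$ by $\xb_k^{j+1}-\xb_k^j$, I obtain the scalar identity
\[
(\xb_k^{j+1}-\xb_k^j)^\top\Ab(\xb_k^j)(\xb_k^{j+1}-\xb_k^j)=-(\xb_k^{j+1}-\xb_k^j)^\top\gb_k^j.
\]

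To conclude, I would bound the two sides separately. The left-hand side is at least $\eta\|\xb_k^{j+1}-\xb_k^j\|^2$ by \eqref{e:mineigA}, while the right-hand side is at most $\|\xb_k^{j+1}-\xb_k^j\|\,\|\gb_k^j\|$ by the Cauchy--Schwarz inequality. Combining the two bounds and dividing by $\|\xb_k^{j+1}-\xb_k^j\|$ (the inequality being trivial when the step vanishes) yields $\eta\|\xb_k^{j+1}-\xb_k^j\|\le\|\gb_k^j\|$, as claimed.

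I do not expect a serious obstacle: the proof is short once the key manipulation is spotted, namely taking the inner product of the stationarity equation with the step $\sb_k^{j+1}-\sb_k^j$, so that the update involving the pseudo-inverse $\Bb_{k,\sb_k^j}^{-1}$ collapses into a quadratic form governed by \eqref{e:mineigA}. The only point deserving a word of care is that \eqref{Eq_PAS} must hold exactly even though $\Bb_{k,\sb_k^j}$ is merely positive semi-definite; this is legitimate because $q_k(\cdot,\sb_k^j)$ majorizes $f_{k,\delta}$, which is bounded below since $F_\delta$ is coercive (Proposition~\ref{p:existmin}), so the convex quadratic $q_k(\cdot,\sb_k^j)$ attains its minimum and its stationarity equation is genuinely satisfied at $\sb_k^{j+1}$.
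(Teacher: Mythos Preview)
Your proposal is correct and follows essentially the same route as the paper: both take the inner product of the stationarity equation \eqref{Eq_PAS} with the step $\sb_k^{j+1}-\sb_k^j$, apply the lower eigenvalue bound \eqref{e:mineigA} on one side and Cauchy--Schwarz on the other, and cancel one factor of the step norm. Your additional remark justifying why \eqref{Eq_PAS} is genuinely satisfied despite $\Bb_{k,\sb_k^j}$ being only semi-definite is a nice point that the paper leaves implicit.
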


\begin{proof}
According to \eqref{Eq_PAS}, we have, for every $k\in \mathbb{N}$ and $j\in \{0,\ldots,J-1\}$,
\begin{equation}
\Db_k^\top \gb_k^j + \Db_k^\top \Ab(\xb_k^j) \Db_k (\sb_k^{j+1}-\sb_k^j) = \zerob.
\end{equation}
Hence,
\begin{equation}
\big(\Db_k (\sb_k^{j+1}-\sb_k^j)\big)^\top \gb_k^j + 
\big(\Db_k (\sb_k^{j+1}-\sb_k^j)\big)^\top\Ab(\xb_k) \Db_k (\sb_k^{j+1}-\sb_k^j) = \zerob. \label{Eq_Grad}
\end{equation}
By using \eqref{e:mineigA}, 
\eqref{Eq_Grad} leads to
\begin{equation}
- \big(\Db_k (\sb_k^{j+1}-\sb_k^j)\big)^\top \gb_k^j \ge \eta \| \Db_k (\sb_k^{j+1}-\sb_k^j)\|^2.
\end{equation}
In addition, the Cauchy-Schwarz inequality leads to
\begin{equation}
- \big(\Db_k (\sb_k^{j+1}-\sb_k^j)\big)^\top \gb_k^j \le \| \gb_k^j\| \| \Db_k (\sb_k^{j+1}-\sb_k^j)\|.
\end{equation}
Thus, the latter two inequalities yield:
\begin{equation}
\eta \| \Db_k (\sb_k^{j+1}-\sb_k^j)\|^2\le \| \gb_k^j\| \| \Db_k (\sb_k^{j+1}-\sb_k^j)\|.
\end{equation}
Substituting with~\eqref{Eq_SubIter}, we obtain the desired result.
\end{proof}

\subsection{Convergence theorem}

Based on the two previous lemmas, classical results in the
optimization literature \cite{Ortega70} may allow us to deduce the convergence of the sequence $(\xb_k)_{k\in \mathbb{N}}$ generated by the MM subspace algorithm, 
but these results require restrictive conditions on the critical points of the objective function $F_\delta$. We propose here a more general approach based on 
recent results in nonconvex optimization
\cite{Attouch08,Attouch10b,Attouch10a}. We first recall the following definition
from \cite{Lojasiewicz63}:
\begin{definition}
A differentiable function $G\colon \eR^N \to \eR$ is said 
to satisfy the Kurdyka-\L{}o\-ja\-sie\-wicz inequality if,
for every $\widetilde{\xb} \in \eR^N$ and every bounded neighborhood $E$ of $\widetilde{\xb}$, 
there exist three
constants  $\kappa > 0$, $\zeta > 0$ and $\theta\in [0,1)$ such
that
\begin{equation}\label{e:ineqKL}
\|\nabla G(\xb)\| \ge \kappa |G(\xb)-G(\widetilde{\xb}) |^{\theta},
\end{equation}
for every $\xb \in E$ such that $|G(\xb)-G(\widetilde{\xb})| < \zeta$.
\end{definition}

The interesting point is that this inequality is satisfied for a wide class of
functions. In particular, it holds for real analytic functions, semi-algebraic functions
as well as many others~\cite{Bolte06,Bolte07,Kurdyka94,Lojasiewicz63}. Recall that a \newtext[function] $G\colon \eR^N \to \eR$ is semi-algebraic
if its graph $\{(\xb,\eta) \in \eR^N\times \eR\mid \eta = G(\xb)\}$ is a semi-algebraic
set, i.e. it can be expressed as a finite union of subsets of $\eR^N\times \eR$ defined by
a finite number of polynomial inequalities. The semi-algebraicity property is stable under
various operations (sum, product, inversion, composition,...). Examples of semi-algebraic functions
include $\xb \mapsto \|\Hb\xb-\yb\|^2$, $\Psi_\delta$ when the functions $(\psi_{s,\delta})_{1\le s \le S}$ are
given by Example~\ref{ex:psi}\ref{ex:psi1} or \ref{ex:psi}\ref{ex:psi4}, the squared distance to a closed convex semi-algebraic set.
In turn, examples of real-analytic functions include $\xb \mapsto \|\Hb\xb-\yb\|^2$ and
$\Psi_\delta$ when the functions $(\psi_{s,\delta})_{1\le s \le S}$ are
given by Examples~\ref{ex:psi}\ref{ex:psi1}-\ref{ex:psi}\ref{ex:psi3}.
Note that a more general local version of inequality \eqref{e:ineqKL} can also be found in the literature
\cite{Bolte07}.

Let us now state our main convergence result:
\begin{theorem}
Assume that $F_\delta$ satisfies the Kurdyka-\L ojasiewicz inequality.
Under Assumptions~\ref{a:existmin},~\ref{as:Maj} and~\ref{as:GR}, 
the MM subspace algorithm given by \eqref{e:blackpage} %\eqref{Eq_IterSub} and \eqref{Eq_MMRec}
generates a sequence $(\xb_k)_{k\in \mathbb{N}}$ converging to a
  critical point $\widetilde{\xb}$ of $F_\delta$. Moreover,
  this sequence is of finite length, in the sense that
\begin{equation}
\sum_{k=0}^{+\infty} \|\xb_{k+1}-\xb_k\| < + \infty.
\end{equation}
\end{theorem}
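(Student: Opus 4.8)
The plan is to fit the MM subspace recursion into the abstract descent framework behind the Kurdyka-\L ojasiewicz convergence theory of \cite{Attouch08,Attouch10a}, which rests on three ingredients: a \emph{sufficient-decrease} inequality, a \emph{relative-error} (gradient) bound, and a \emph{continuity} condition relating cluster points to the limiting objective value. Lemmas~\ref{le:sufdec2} and~\ref{le:reler} already supply most of the raw material; the remaining work is to aggregate the inner MM iterations into statements about the outer iterates $(\xb_k)_{k\in\eN}$ and then run the KL argument. Throughout, write $S_k = \sum_{j=0}^{J-1}\|\xb_k^{j+1}-\xb_k^j\|$ for the inner path length, so that $\|\xb_{k+1}-\xb_k\|\le S_k$, and observe that it suffices to prove $\sum_k S_k<+\infty$.

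First, sufficient decrease. Summing the per-step estimate of Lemma~\ref{le:sufdec2} over $j\in\{0,\ldots,J-1\}$ gives $F_\delta(\xb_k)-F_\delta(\xb_{k+1})\ge\frac{\eta}{2}\sum_{j=0}^{J-1}\|\xb_k^{j+1}-\xb_k^j\|^2$, and Cauchy--Schwarz ($S_k^2\le J\sum_j\|\xb_k^{j+1}-\xb_k^j\|^2$) upgrades this to $F_\delta(\xb_k)-F_\delta(\xb_{k+1})\ge\frac{\eta}{2J}S_k^2$. In particular $(F_\delta(\xb_k))_{k\in\eN}$ is non-increasing; since $F_\delta$ is coercive by Proposition~\ref{p:existmin} and bounded below, it converges to some $F_\ast$ and the iterates stay in a fixed compact level set, so $(\xb_k)_{k\in\eN}$ is bounded and admits a cluster point $\widetilde{\xb}$, with $F_\delta(\widetilde{\xb})=F_\ast$ by continuity (this also discharges the continuity condition).

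Second, the relative-error bound $\|\gb_k\|\le C\,S_k$, which I expect to be the crux. The obstacle is that the multivariate step only forces stationarity of the quadratic majorant \emph{along} the subspace $\Db_k$: \eqref{Eq_PAS} controls $\Db_k^\top\gb_k$, not the full gradient $\gb_k$. The fix is Assumption~\ref{as:GR}. Taking the first inner step ($j=1$, $\sb_k^0=\zerob$) in \eqref{Eq_PAS} yields $\Db_k^\top\gb_k=-\Db_k^\top\Ab(\xb_k)(\xb_k^1-\xb_k)$, whose first component is $(\db_k^1)^\top\gb_k=-(\db_k^1)^\top\Ab(\xb_k)(\xb_k^1-\xb_k)$. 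The gradient-related conditions \eqref{Eq_GR1}--\eqref{Eq_GR2} bound the left-hand side below by $\gamma_0\|\gb_k\|^2$, while the uniform estimate $\Ab(\xb)\preceq\tfrac{\nu}{2}\Ib$ from the proof of Lemma~\ref{le:sufdec}, together with $\|\db_k^1\|\le\gamma_1\|\gb_k\|$, bounds the right-hand side above by $\tfrac{\nu\gamma_1}{2}\|\gb_k\|\,\|\xb_k^1-\xb_k\|$. Cancelling one factor of $\|\gb_k\|$ gives $\|\gb_k\|\le\tfrac{\nu\gamma_1}{2\gamma_0}\|\xb_k^1-\xb_k\|\le C\,S_k$ with $C=\tfrac{\nu\gamma_1}{2\gamma_0}$ (the degenerate case $\gb_k=\zerob$ makes the recursion stationary at $\xb_k$, so the claim is trivial).

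Finally, combine with KL. Choosing the concave desingularizer $\varphi(s)=\frac{\kappa^{-1}}{1-\theta}\,s^{1-\theta}$ on a neighborhood $E$ of $\widetilde{\xb}$, concavity and the inequality $\varphi'(F_\delta(\xb_k)-F_\ast)\ge\|\gb_k\|^{-1}$ (which is exactly \eqref{e:ineqKL} at $\widetilde{\xb}$) give $\varphi(F_\delta(\xb_k)-F_\ast)-\varphi(F_\delta(\xb_{k+1})-F_\ast)\ge\big(F_\delta(\xb_k)-F_\delta(\xb_{k+1})\big)/\|\gb_k\|$. Substituting the two previous bounds collapses the right-hand side to $\tfrac{\eta}{2JC}\,S_k$, so $S_k\le\tfrac{2JC}{\eta}\big(\varphi(F_\delta(\xb_k)-F_\ast)-\varphi(F_\delta(\xb_{k+1})-F_\ast)\big)$. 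The right-hand side telescopes, hence $\sum_k S_k<+\infty$ \emph{provided} the iterates never leave $E$; this is secured by the standard bootstrap, choosing an index $K_0$ along the subsequence converging to $\widetilde{\xb}$ for which $\|\xb_{K_0}-\widetilde{\xb}\|$ and $\varphi(F_\delta(\xb_{K_0})-F_\ast)$ are small enough that the accumulated path length cannot exit $E$ — precisely the mechanism packaged by \cite{Attouch10b,Attouch10a}. Consequently $\sum_k\|\xb_{k+1}-\xb_k\|\le\sum_k S_k<+\infty$, so $(\xb_k)_{k\in\eN}$ is Cauchy and converges to $\widetilde{\xb}$; and since $\|\gb_k\|\le C\,S_k\to0$ with $\nabla F_\delta$ continuous, $\nabla F_\delta(\widetilde{\xb})=\zerob$, i.e. $\widetilde{\xb}$ is a critical point.
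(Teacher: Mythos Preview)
Your argument is correct and reaches the same conclusion, but the route differs structurally from the paper's. The paper proceeds in two passes: first it combines Lemma~\ref{le:sufdec} (the descent bound $F_\delta(\xb_k)-F_\delta(\xb_{k+1})\ge\frac{\gamma_0^2}{\gamma_1^2}\nu^{-1}\|\gb_k\|^2$) with the KL inequality and a convexity trick to obtain $\sum_k\|\gb_k\|<\infty$; then it combines Lemma~\ref{le:sufdec2} with KL and the elementary bound $(uv)^{1/2}\le u+\tfrac{v}{4}$, feeding the already-established summability of $\|\gb_k\|$ back in, to conclude $\sum_k\|\xb_{k+1}-\xb_k\|<\infty$. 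You instead derive a genuine \emph{relative-error} inequality $\|\gb_k\|\le C\,S_k$ directly from the first-column optimality condition \eqref{Eq_PAS} together with the gradient-related bounds \eqref{Eq_GR1}--\eqref{Eq_GR2} and the uniform majorant curvature, and then run the single-shot Attouch--Bolte descent scheme (sufficient decrease $+$ relative error $+$ KL $\Rightarrow$ finite length). This is cleaner in that it avoids the intermediate summation of gradients and the AM--GM detour, and it makes Lemma~\ref{le:sufdec} itself unnecessary (you only borrow the eigenvalue bound $\Ab(\xb)\preceq\tfrac{\nu}{2}\Ib$ from its proof); Lemma~\ref{le:reler}, which you mention, is in fact not used by either argument. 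One simplification available to you: under the paper's Definition of KL the inequality holds on \emph{any} bounded neighborhood, so you may take $E$ to be the compact sublevel set $\operatorname{lev}_{\le F_\delta(\xb_0)}F_\delta$ containing the whole trajectory, and the ``standard bootstrap'' to trap the iterates near $\widetilde{\xb}$ becomes superfluous---it suffices that $F_\delta(\xb_k)-F_\ast<\zeta$ for $k$ large, exactly as the paper does.
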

\begin{proof}
As $(F_\delta(\xb_k))_{k\in \mathbb{N}}$ is a
decreasing sequence and $\operatorname{lev}_{\le F_\delta(\xb_0)} = \left\{\xb \in \eR^N | F_\delta(\xb) \le F_\delta(\xb_0)\right\}$ is a
bounded set (by virtue of Proposition~\ref{p:existmin}\ref{p:existi}), the sequence
$(\xb_k)_{k\in\mathbb{N}}$ belongs to a compact subset $E$ of $\eR^N$.
Hence, there exists a
subsequence $(\xb_{k_i})_{i\in \mathbb{N}}$ of $(\xb_k)_{k\in
  \mathbb{N}}$ converging to a vector $\widetilde{\xb}$ of $\eR^N$. 
Besides, since $F_\delta$ is a continuous function,
$(F_\delta(\xb_{k_i}))_{i\in \mathbb{N}}$ converges to
$F_\delta(\widetilde{\xb})$. 
As
$(F_\delta(\xb_k))_{k\in \mathbb{N}}$ is decreasing,
% and
%$F_\delta \ge \inf \Phi > -\infty$,
and Proposition~\ref{p:existmin}\ref{p:existi} shows that 
it is \newtext[bounded below], we deduce that 
$(F_\delta(\xb_k)-F_\delta(\widetilde{\xb}))_{k\in
  \mathbb{N}}$ is a nonnegative sequence converging to 0.

Now, by invoking Lemma \ref{le:sufdec} (with $j=J$), we have that, for every
$k\in \mathbb{N}$,
\begin{align}\label{e:dusoir}
\frac{\gamma_0^2 }{\gamma_1^2 } \nu^{-1}\|\gb_k \|^2 &\le 
F_{\delta}(\xb_k)-F_{\delta}(\xb_{k+1}) =  F_{\delta}(\xb_k)-
F_\delta(\widetilde{\xb})-\big(F_{\delta}(\xb_{k+1})-F_\delta(\widetilde{\xb})\big).
\end{align}
According to the \L ojasiewicz property, there exist
constants $\kappa > 0$, $\zeta > 0$ and $\theta\in [0,1)$ such
that
\begin{equation}
\|\nabla F_\delta(\xb)\| \ge \kappa |F_\delta(\xb)-F_\delta(\widetilde{\xb}) |^{\theta},
\end{equation}
for every $\xb \in E$ such that 
$|F_\delta(\xb)-F_\delta(\widetilde{\xb})| < \zeta$.
We now apply to the convex function
$\varphi\colon [0,+\infty) \to [0,+\infty)\colon u \mapsto u^{1/(1-\theta)}$ 
 the following gradient inequality
\begin{equation}
(\forall (u,v) \in [0,+\infty)^2)\qquad
\varphi(v) \ge \varphi(u)+ \dot{\varphi}(u) (v-u)
\end{equation}
which, after a change of variables, can be rewritten as
\begin{equation}
(\forall (u,v) \in [0,+\infty)^2)\qquad
u-v \le (1-\theta)^{-1} u^\theta(u^{1-\theta}-v^{1-\theta}).
\end{equation}
Combining the latter inequality with \eqref{e:dusoir} leads to
\begin{align}\label{e:BndDiffF}
F_{\delta}(\xb_k)-
F_\delta(\widetilde{\xb})-\big(F_{\delta}(\xb_{k+1})-F_\delta(\widetilde{\xb})\big)& \le (1-\theta)^{-1}(F_{\delta}(\xb_k)-
F_\delta(\widetilde{\xb}))^\theta \Delta_k
\end{align}
where 
\begin{equation}\label{e:DefDelta}
\Delta_k = \big(F_{\delta}(\xb_k)-
F_\delta(\widetilde{\xb})\big)^{1-\theta}-
\big(F_{\delta}(\xb_{k+1})-F_\delta(\widetilde{\xb})\big)^{1-\theta}.
\end{equation}
Thus,
\begin{align}
\| \gb_k \|^2 &  \le \frac{\gamma_1^2 }{\gamma_0^2 } \nu(1-\theta)^{-1}(F_{\delta}(\xb_k)-
F_\delta(\widetilde{\xb}))^\theta \Delta_k.
\end{align}
Since $(F_\delta(\xb_k))_{k\in \mathbb{N}}$ converges to
$F_\delta(\widetilde{\xb})$, there exists $k^* \in \eN$, such that, for every $k\ge k^*$,
$0 \le F_\delta(\xb_k) - F_\delta(\widetilde{\xb}) < \zeta$.
By applying the \L{}ojasiewicz inequality,
%there exists $\kappa >0$ such that, for every $k\in \mathbb{N}$,
%for every $k \ge k^*$,
\begin{align}\label{e:boundgk2}
(\forall k \ge k^*)\qquad
\| \gb_k \|^2 
&\le \frac{\gamma_1^2 }{\gamma_0^2 } \nu\kappa^{-1}(1-\theta)^{-1} \|\gb_k\| \Delta_k.
\end{align}
This allows us to deduce that
\begin{equation}\label{e:sumgbk}
\sum_{k=k^*}^{+\infty}\| \gb_k \| \le 
\frac{\gamma_1^2 }{\gamma_0^2 } \nu\kappa^{-1}(1-\theta)^{-1}
\big(F_{\delta}(\xb_{k^*})-F_\delta(\widetilde{\xb})\big)^{1-\theta}.
\end{equation}

Furthermore, according to \eqref{Eq_SubIter}, 
\begin{equation}
\frac{\eta}{2} \|\xb_{k+1}-\xb_k\|^2
= \frac{\eta}{2} \Big\|\sum_{j=0}^{J-1} (\xb_k^{j+1} -
\xb_k^j) \Big\|^2
\end{equation}
which, by using Lemma \ref{le:sufdec2} and the convexity of the
squared norm, yields for every $k\in\mathbb{N}$,
\begin{align}
\frac{\eta}{2} \|\xb_{k+1}-\xb_k\|^2 &\le \frac{\eta J}{2} \sum_{j=0}^{J-1} \|\xb_k^{j+1}
-\xb_k^j\|^2\nonumber\\
&\le J \sum_{j=0}^{J-1} F_\delta(\xb_k^{j}) -
F_\delta(\xb_k^{j+1})
= J \big(F_\delta(\xb_k) - F_\delta(\xb_{k+1})\big).
\end{align}
By proceeding similarly to the derivation of \eqref{e:boundgk2}, we obtain: for every $k\ge k^*$,
\begin{equation}\label{e:eta2xb}
\frac{\eta}{2} \|\xb_{k+1}-\xb_k\|^2
\le J(1-\theta)^{-1}  \big(F_{\delta}(\xb_k)-
F_\delta(\widetilde{\xb})\big)^{\theta} \Delta_k
\le J \kappa^{-1}(1-\theta)^{-1} \|\gb_k\| \Delta_k.
\end{equation}
By using the fact that, for every $(u,v) \in
[0,+\infty)^2$,  $(uv)^{1/2} \le u + \frac{v}{4}$, and taking $u = J \eta^{-1}\kappa^{-1}(1-\theta)^{-1} \Delta_k$ and $v= 2 \| \gb_k\|$, \eqref{e:eta2xb} leads to
\begin{equation}
\|\xb_{k+1}-\xb_k\| \le J \eta^{-1}\kappa^{-1}(1-\theta)^{-1} \Delta_k
+ \frac{1}{2}\|\gb_k\|.
\end{equation}

By summing now over $k$ and using \eqref{e:DefDelta} and \eqref{e:sumgbk}, we finally obtain
\begin{equation}
\sum_{k=k^*}^{+\infty}\|\xb_{k+1}-\xb_k\| \le 
\kappa^{-1}(1-\theta)^{-1}(J\eta^{-1}+\frac{\gamma_1^2 }{\gamma_0^2 } \frac{\nu}{2})
%\kappa^{-1}(2J(\beta+1) \eta^{-1} +\nu)
\big(F_{\delta}(\xb_{k^*})-F_\delta(\widetilde{\xb})\big)^{1-\theta}.
\end{equation}
This gives us the desired finite length property. In addition, since this
condition implies that $(\xb_k)_{k\in\mathbb{N}}$ is a Cauchy
sequence, it converges towards a single point, which is necessarily
$\widetilde{\xb}$.
Finally, due to the continuity of $F_\delta$ and Lemma \ref{le:sufdec}, 
$(\gb_k)_{k \in \eN}$ converges to zero. As $(x_k,F_\delta(x_k)) \to (\widetilde{\xb},F_\delta(\widetilde{\xb}))$, 
the closedness property of the gradient implies that $\nabla
F_\delta(\widetilde{\xb}) = \zerob$, i.e. $\widetilde{\xb}$ must be a critical point of $F_\delta$.

\end{proof}   

Note that the inexact gradient methods that are studied in
\cite{Attouch10a} \newtext[are distinct from the subspace algorithms we
consider]. %% l'ancienne phrase était peut-être moins claire.

\section{Simulation results}\label{Sec_Res} 

The aim of this section is to illustrate and analyze the performance of the
proposed algorithm in the context of Problem~\eqref{Eq_CritJ}.  \newtext[We also
show the nonconvex penalization functions in Example \ref{ex:psi} to be appropriate for
image processing applications.]  To this end, four image processing
problems are considered, namely denoising, segmentation, deblurring and
tomographic reconstruction. For each of them, the produced image $\widehat{\xb}\in
\eR^N$ is defined as a minimizer of the function $F_\delta$, where $\Phi$,
$\Hb$, $\yb$ and $\Vb$ depend on the considered application. For the elastic net
regularization term, we choose $\Vb_0 = \tau \Ib$, $\tau \ge 0$. For deblurring
and tomographic applications, the linear operator $\Hb$ is not necessarily
injective. Thus, we set $\tau$ equal to a small positive value in order to
fulfill Assumption \ref{a:existmin}\ref{a:existminiii}. In the two other cases,
$\tau$ is set to zero.

%
%The anisotropic edge-preserving penalization term is considered:
%\begin{equation}
%\Psi_{\delta}(\xb) = \sum_{s=1}^S (\psi_{s,\delta}(\Vb_s \xb)) + \tau \|\xb\|^2, \label{Eq_Regul}
%\end{equation}
%where $S = 2N$, for every $s\in \{1,\ldots,N\}$ (resp. $s\in \{N+1,\ldots,2N\}$), $\Vb_s$ corresponds to a horizontal (resp. vertical) gradient operator, and 
%, with an isotropic penalization on neighbouring pixels. With tomographic applications, the
%linear operator $\Hb$ is not necessarily injective. Thus, we set $\tau$ equal to a small positive value in order to fulfill Assumption \ref{a:existmin}-\ref{a:existminiii}. In the three other cases, $\tau$ is set to zero.

For every $s\in \{1,\ldots,S\}$, we have set $\cb_s = \zerob$.
For the potential function $\psi_{s,\delta}$, we have tested the smooth convex $\ell_2-\ell_1$
function $\psi_{s,\delta}\colon t\mapsto \lambda (\sqrt{1 +
  \froc{t^2}{\delta^2}}-1)$ with $\lambda > 0$ (SC) and the smooth nonconvex
functions in Example \ref{ex:psi}\ref{ex:psi1} (SNC\ref{ex:psi1}), Example
\ref{ex:psi}\ref{ex:psi2} (SNC\ref{ex:psi2}), Example
\ref{ex:psi}\ref{ex:psi3} (SNC\ref{ex:psi3}) and Example
\ref{ex:psi}\ref{ex:psi4} (SNC\ref{ex:psi4}). Moreover, in the
denoising and segmentation examples, we provide optimization results for
four state-of-the-art combinatorial optimization algorithms, namely the
$\alpha$-expansion \cite{Boykov01} ($\alpha$-EXP), Quantized-Convex Move
Splitting~\cite{Jezierska11} (QCSM), Tree-Reweighted (TRW) \cite{Kolmogorov06}
and Belief Propagation (BP) \cite{Felzenszwalb10} algorithms, for which the
nonsmooth nonconvex truncated quadratic function in Example~\ref{ex:psi}\ref{ex:quadtrunc} (NSNC) is considered.  When
the linear degradation operator is not the identity matrix, we do not provide
any comparison with the combinatorial algorithms. Indeed, although a few algorithms
\cite{Raj05, Raj07} are applicable to inverse problems involving a linear
degradation operator, these methods are well-founded only for a sparse
convolution operator $\Hb$. Moreover, they rely on an adaptation of the graph
cut $\alpha$-expansion algorithm, which is shown in our segmentation and
denoising examples to be outperformed by our approach.

The computation of the proposed MM subspace algorithm requires specifying the
direction set $\Db_k$, for every $k\in \eN$, and the number of MM sub-iterations
$J$. \newtext[First, the memory-gradient direction matrices,]
 \begin{equation}
(\forall k \ge m)\qquad \Db_k=[-\gb_k \;\left|\right.\; \xb_k-\xb_{k-1} \;\left|\right.\; \cdots \;\left|\right.\; \xb_{k-m+1} - \xb_{k-m}]\in \eR^{N \times (m+1)},
 \end{equation}
 \newtext[with memory parameter $m\ge 0$, is considered. Moreover, in all our
 experiments, we set $J=1$. This choice was observed to yield the best results
 in terms of convergence profile in the context of MM-based step size computation
 \cite{Chouzenoux11,Labat08}. In the following, we compare our proposed subspace
 algorithm, denoted hereafter by 3MG-$m$ (for Majorize-Minimize Memory
 Gradient) with three other iterative first order descent methods. The methods
 we compare against are namely the nonlinear conjugate gradient (NLCG) algorithm
 \cite{Hager06}, the L-BFGS algorithm \cite{Liu89} with the memory parameter set
 to $3$, and the fast version of half quadratic (HQ) algorithm \cite{Allain06}.]
 For each descent algorithm, the MM scalar line search with $J=1$ is employed
 for the computation of the step size.  In the case of HQ, the inner optimization
 problems are solved partially with conjugate gradient iterations.
 \newtext[Note that this algorithm has been previously studied in the context of
 nonconvex regularization functions in~\cite{Delaney98,Rivera03}.]  In order to
 \newtext[limit the influence of possible local minima in the nonconvex case], the result of $10$
 iterations of convex minimization using an $\ell_2-\ell_1$ penalty is employed
 as an initialization.
%In the nonconvex case, the result of $10$ iterations of convex minimization using an $\ell_2-\ell_1$ penalty is employed as an initialization, since this strategy was observed to reduce the effect of local minima. 
 In the convex case, minimization is started with the constant null image. The
 computational complexity is evaluated in terms of iteration number and
 computational time in seconds necessary to achieve the global stopping rule
 $\|\gb_k\|/\sqrt{N} <
 10^{-4}$.  %For this settings, no significant differences between algorithms have been observed in terms of criterion value or reconstruction quality. %\textsf{Matlab} codes running on a quad-core Intel Core i7 2.67GHz with 4GB of RAM (Windows 7 Professionnal) were used.
 \textsf{C++} codes were compiled with the Intel compiler icpc
 (version 12.1.0) and were run on an Intel(R) Xeon(R) CPU X5570 at 2.93GHz, in a
 single thread.

\subsection{Image denoising}

The first problem considered in this section corresponds to the recovery of an
image $\overline{\xb}$ from noisy observations $\ub = \overline{\xb}+\wb$ where
$\wb$ is a realization of a zero-mean white Gaussian noise.  The vector
$\overline{\xb}$ here corresponds to the \texttt{Word} image of size $N = 128 \times
128$ pixels. The variance of the noise was adjusted to correspond to a
signal-to-noise ratio (SNR) of $15$ dB (Figure \ref{Fig:Word}). The recovery of
the original image is performed by solving \eqref{Eq_CritJ} where $Q= 2N$,
\begin{equation}
\Hb = \left[\begin{array}{c} \Ib \\ \Ib\end{array} \right] \qquad \yb = \left[\begin{array}{c} \ub \\ \zerob\end{array} \right],
\end{equation}
and 
\begin{equation}
(\forall \zb =(z_q)_{1\le q \le 2N})\qquad 
\Phi(\zb) = \frac{1}{2} \left(\sum_{q=1}^N z_q^2 + \beta \sum_{q=N+1}^{2N} d_B^2(z_q) \right),
\end{equation}
where $d_B$ denotes the distance to the closed convex interval $B = [0,255]$ and
$\beta >0$ is a weighting factor. Then, $\Phi$ is Lipschitz differentiable with
Lipschitz constant $L = \max(1,\beta)$. In the sequel, we choose $\beta = 1$
so that we have $L = 1$. Moreover, the penalization term \eqref{e:Psid} is
used, with $\tau = 0$ and an anisotropic penalization on neighboring pixels
i.e., $S = 2N$, and for every $s\in \{1,\ldots,N\}$ (resp. $s\in
\{N+1,\ldots,2N\}$), $P_s = 1$ and $\Vb_s$ corresponds to a horizontal
(resp. vertical) gradient operator. This anisotropic term is chosen so as to
compare more fairly our approach with the combinatorial methods. 

Parameters $\lambda$ and $\delta$ were chosen to maximize the SNR between the
original image and its reconstructed version. In Figure~\ref{Fig:WordRec}, the
reconstructed images are displayed and the corresponding SNR and MSSIM
\cite{Wang04a} values are provided. Morever, the absolute values of the
reconstruction errors $\widehat{\xb}-\overline{\xb}$ are illustrated. It should
be noticed that the nonconvex regularization strategy with penalty function
SNC\ref{ex:psi1} leads to the best results in terms of reconstruction
quality.

%The restoration of the initial image is performed by solving the following minimization problem
%\begin{equation}
%\min_{\xb \in \eR^N} \left(F_\delta(\xb) = \frac{1}{2}\|\xb - \yb\|^2 + \beta \sum_{i=1}^N d_B^2(x_i) + \Psi_{\delta}(\xb)\right), \label{Eq_CritJ_De}
%\end{equation}
%where $d_B^2(x)$ denotes the squared distance to the convex set $B = \left\{x \in \eR | 0 \leqslant x \leqslant 255 \right\}$, parameterized by $\beta = 1$, and $\Psi_{\delta}(.)$ is defined as in \eqref{Eq_Regul} with $\tau = 0$. 
\begin{figure}[!hftbp]
\centering
\begin{tabular}{cc}
\includegraphics[width=5cm]{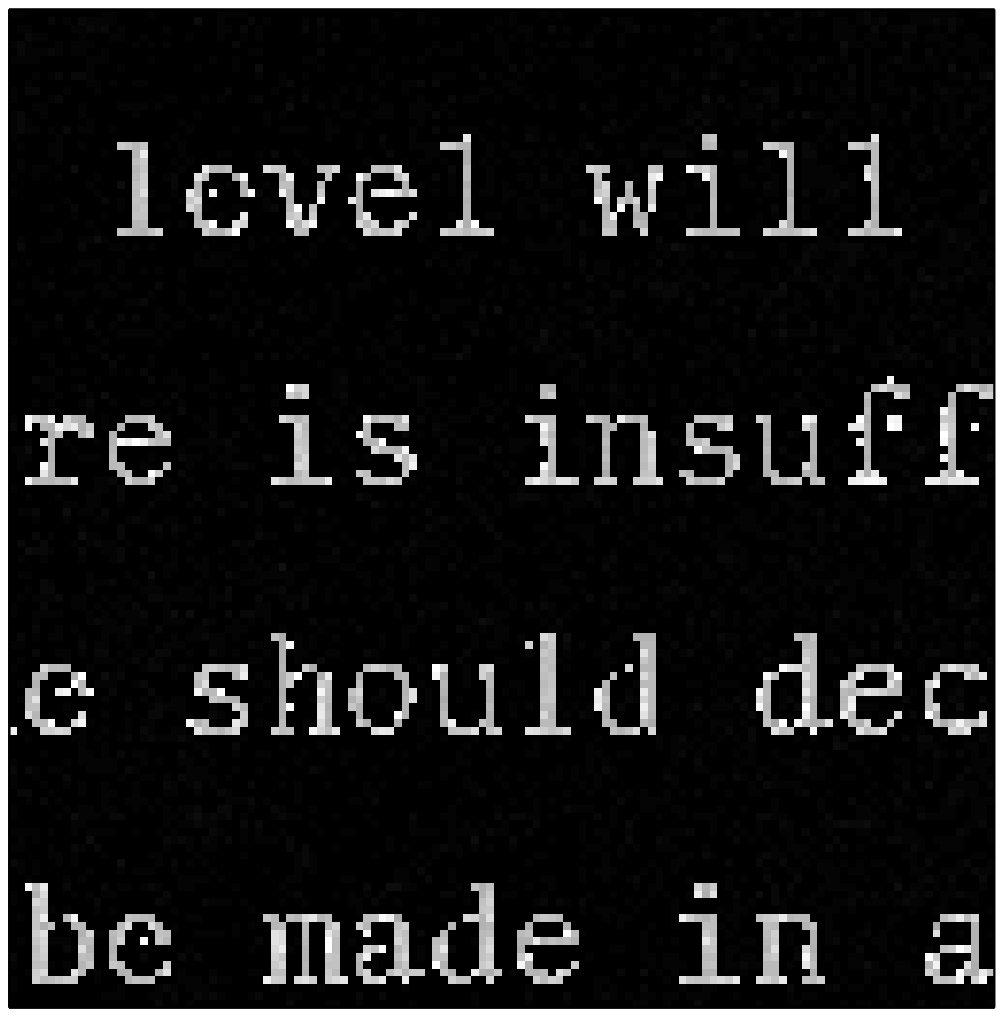}&
\includegraphics[width=5cm]{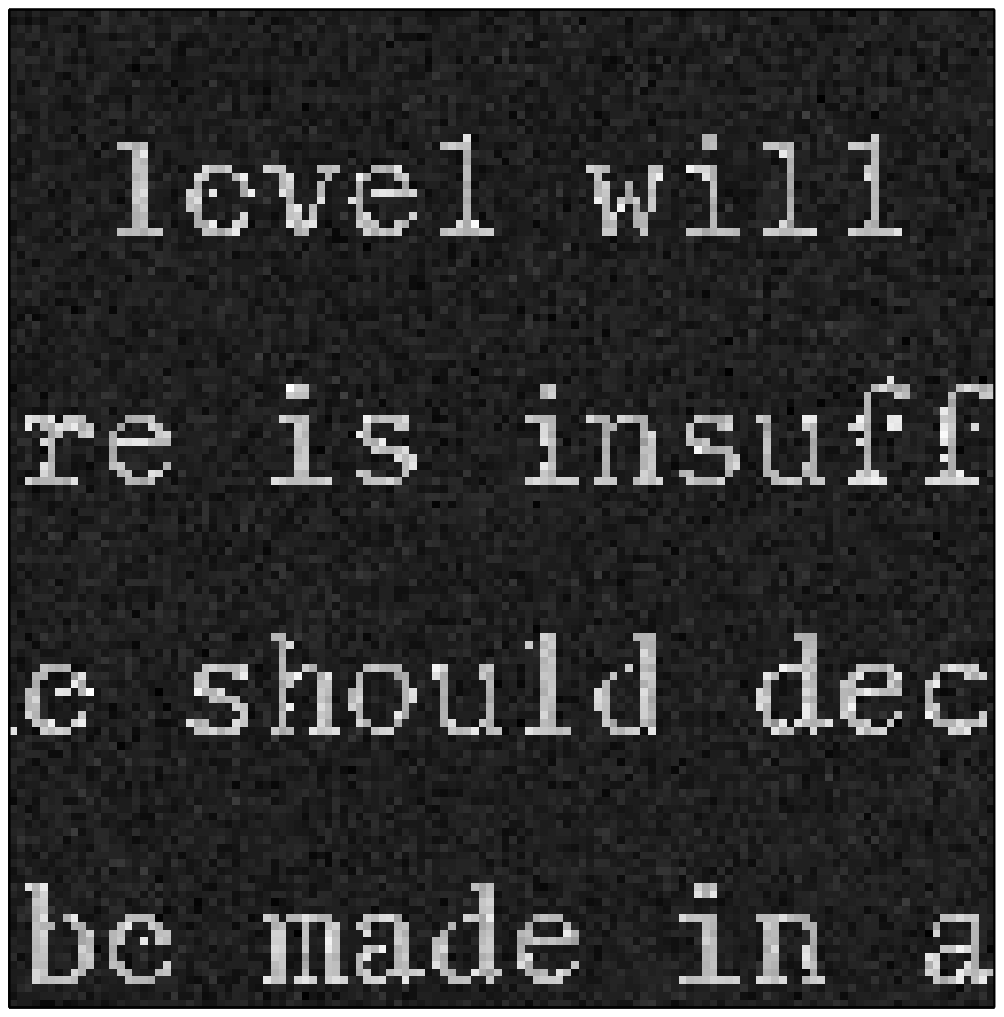}\\
$(a)$ & $(b)$ 
\end{tabular}
%(a) Original image with 128 × 128 pixels and (b) noisy image
%with SNR = 15 dB, MSSIM = 0.66, and noise standard deviation equal to 10.
\caption{$(a)$ Original image with $128 \times 128$ pixels and $(b)$ noisy image
with SNR $= 15$ dB, MSSIM $= 0.66$, and noise standard deviation equal to $10$.}
\label{Fig:Word}
\vspace{0.7cm}
\centering
\begin{tabular}{ccc}
\includegraphics[width=5cm]{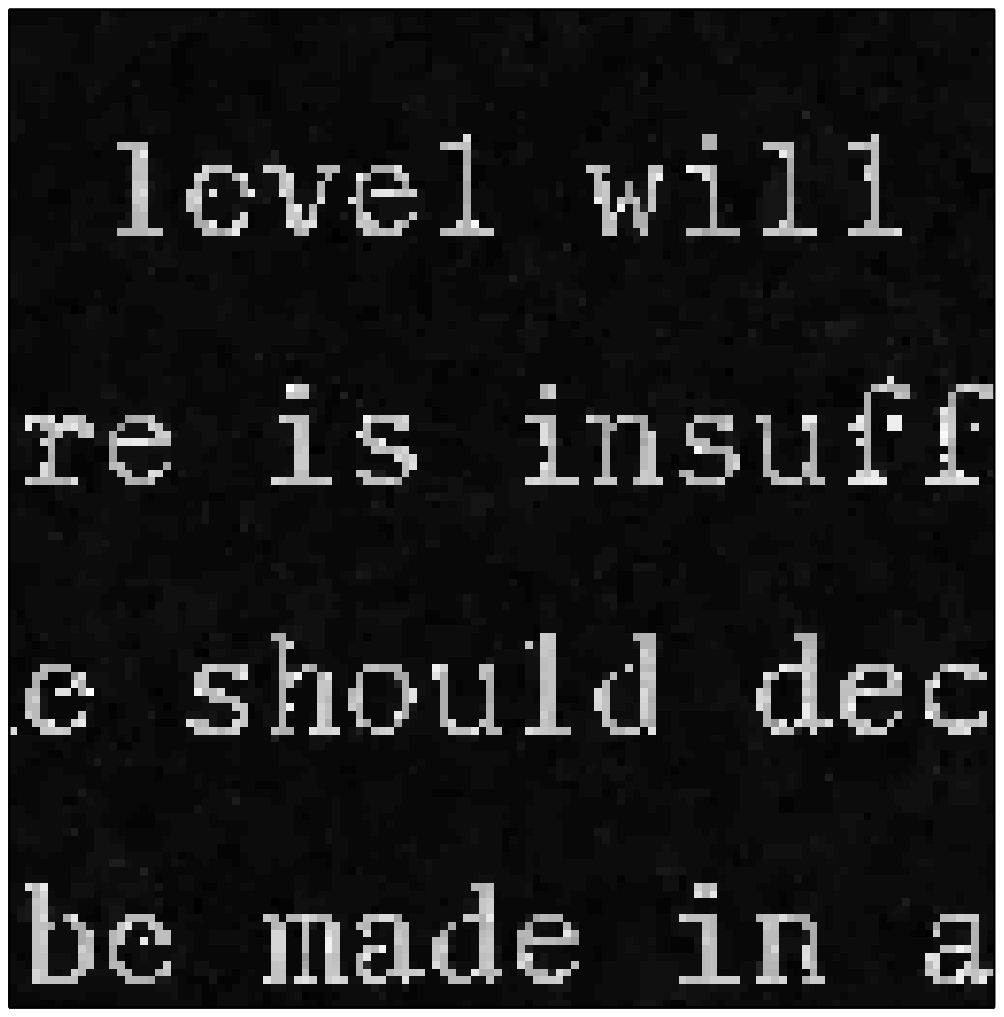}&
\includegraphics[width=5cm]{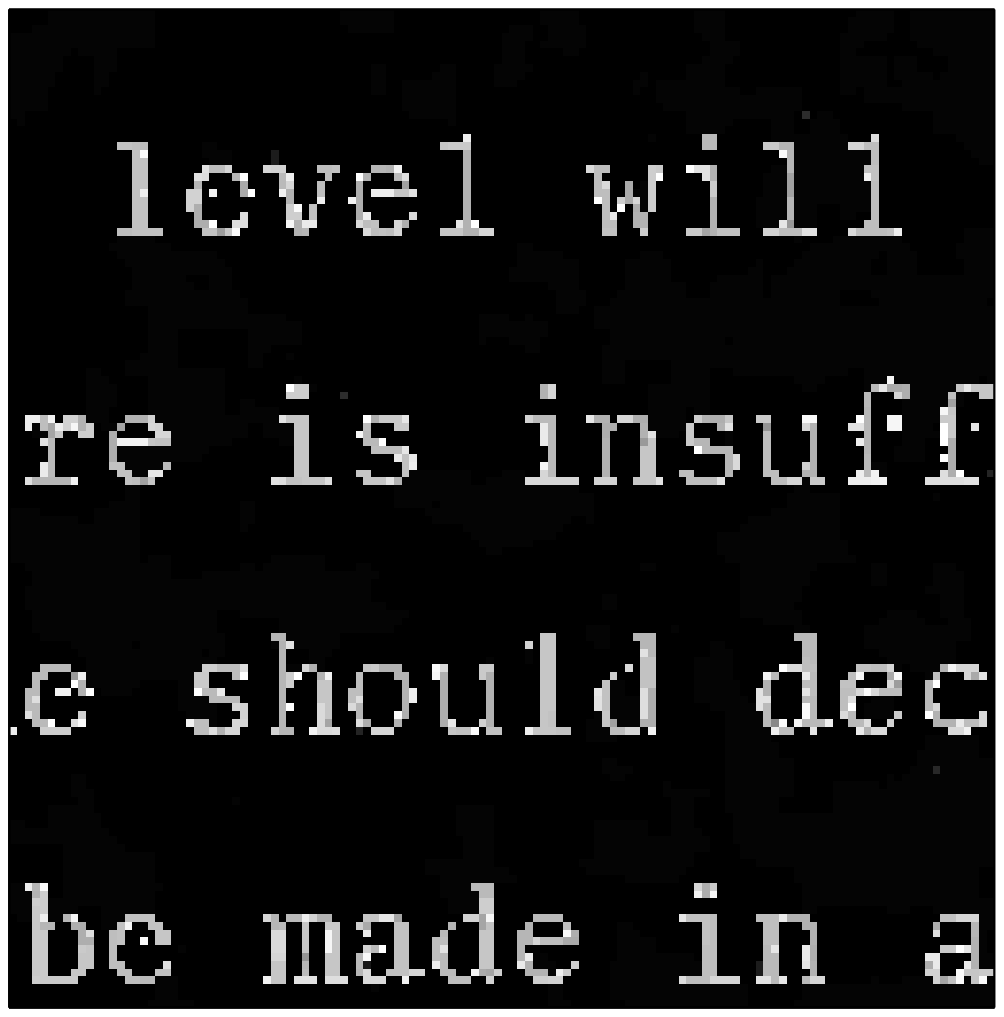} &
\includegraphics[width=5cm]{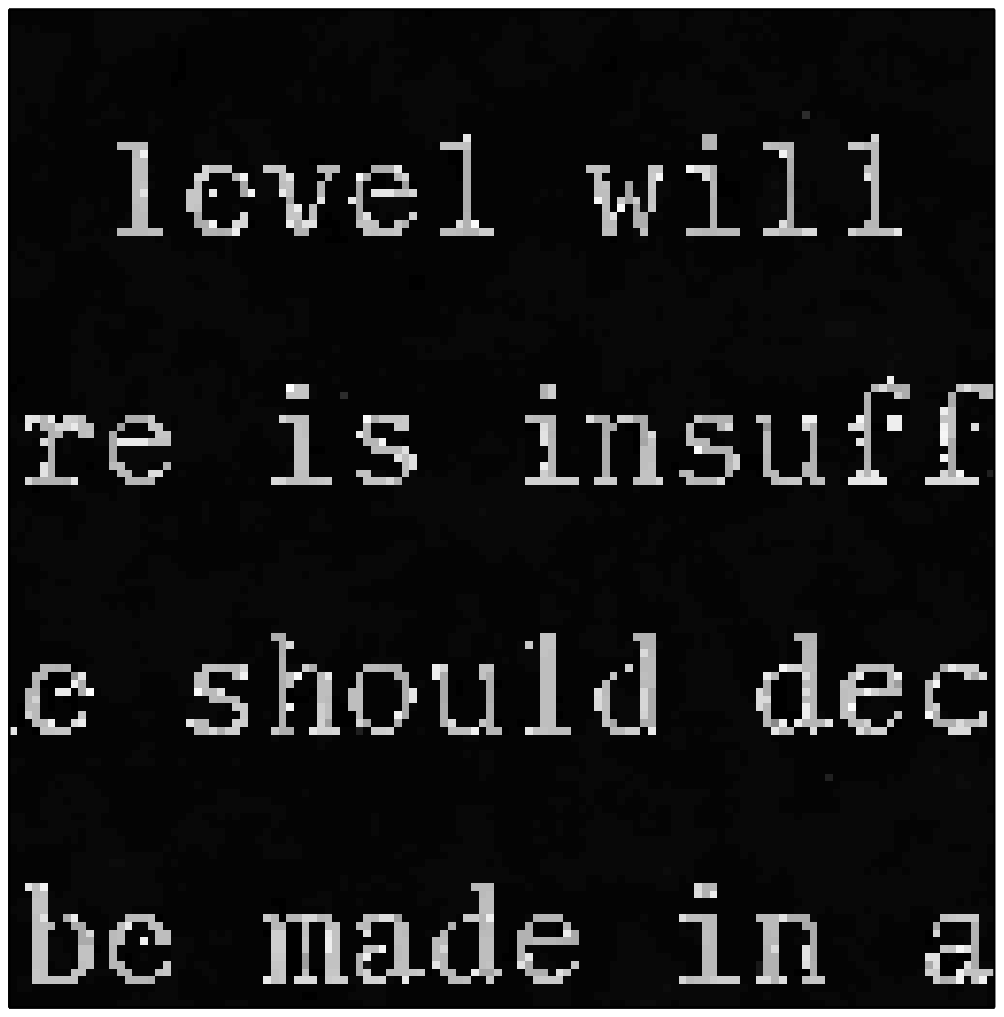}\\
\includegraphics[height=5cm]{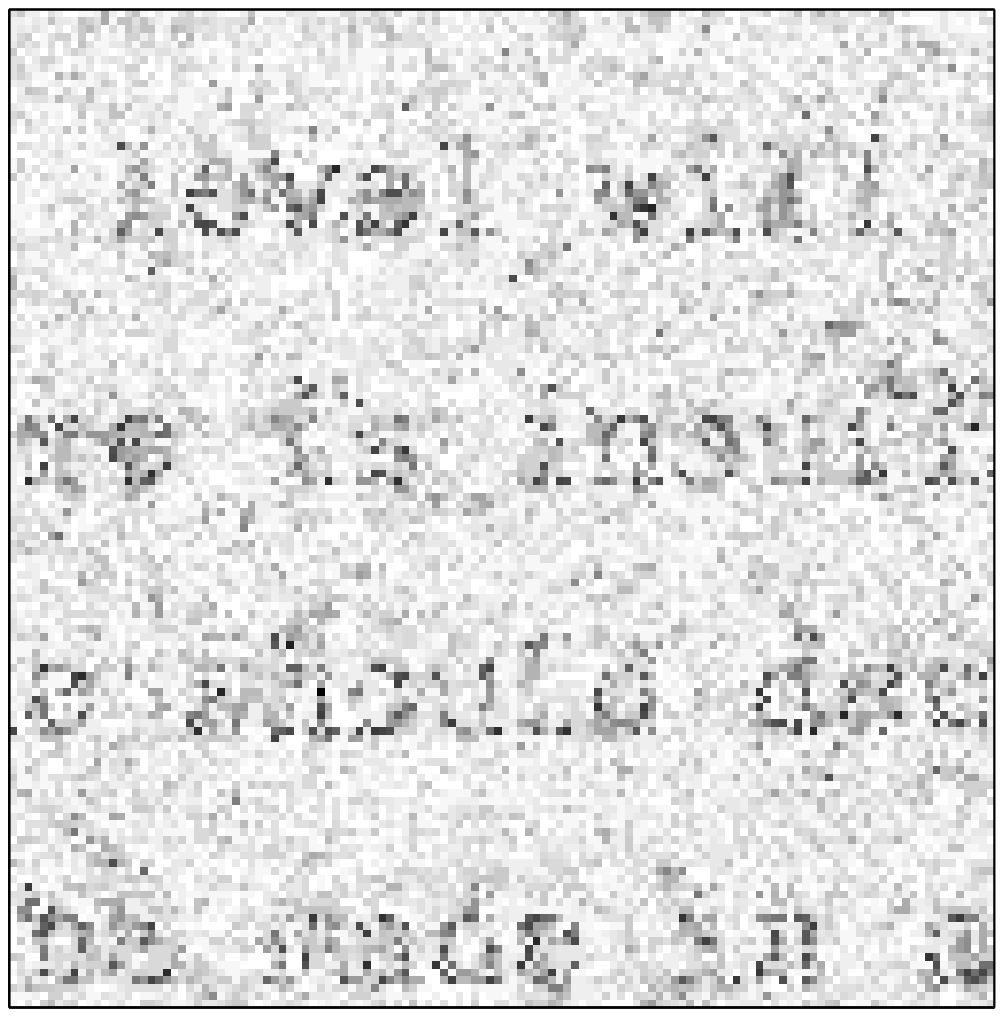} &
\includegraphics[height=5cm]{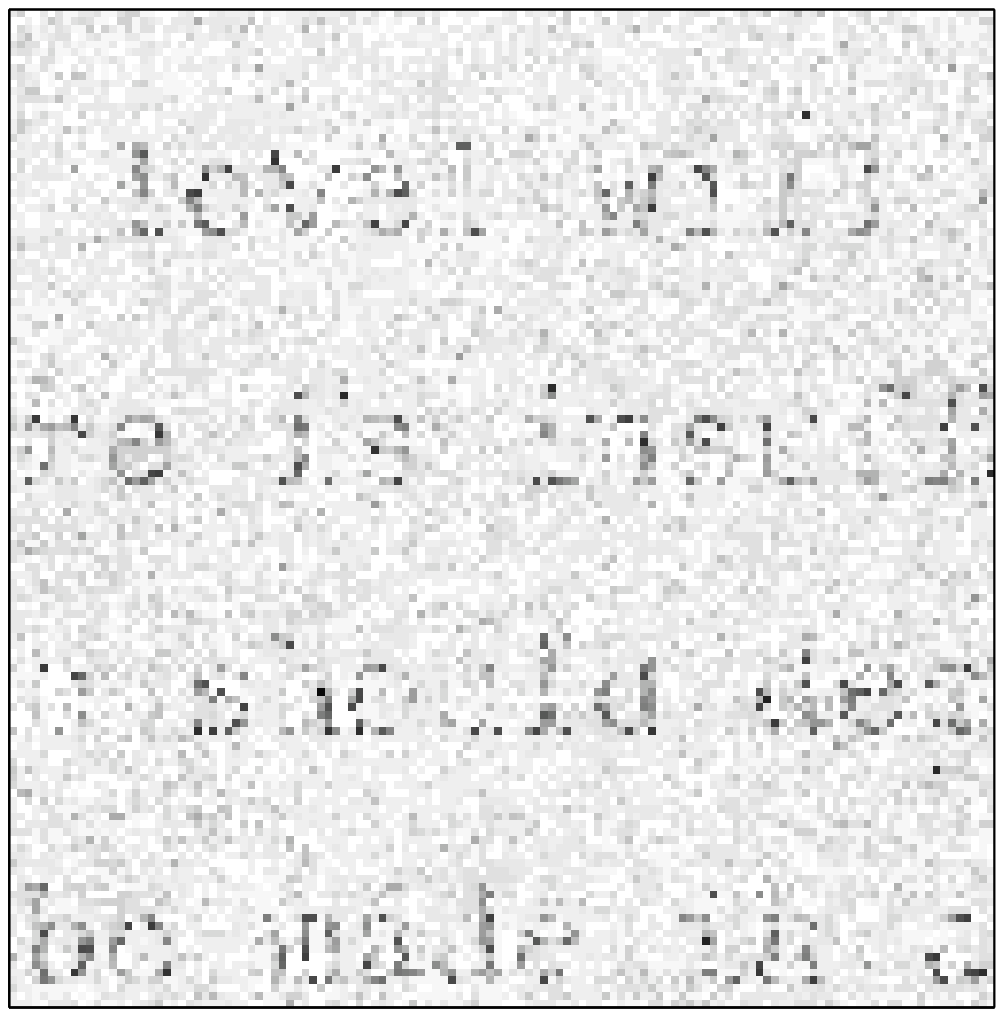} &
\includegraphics[height=5cm]{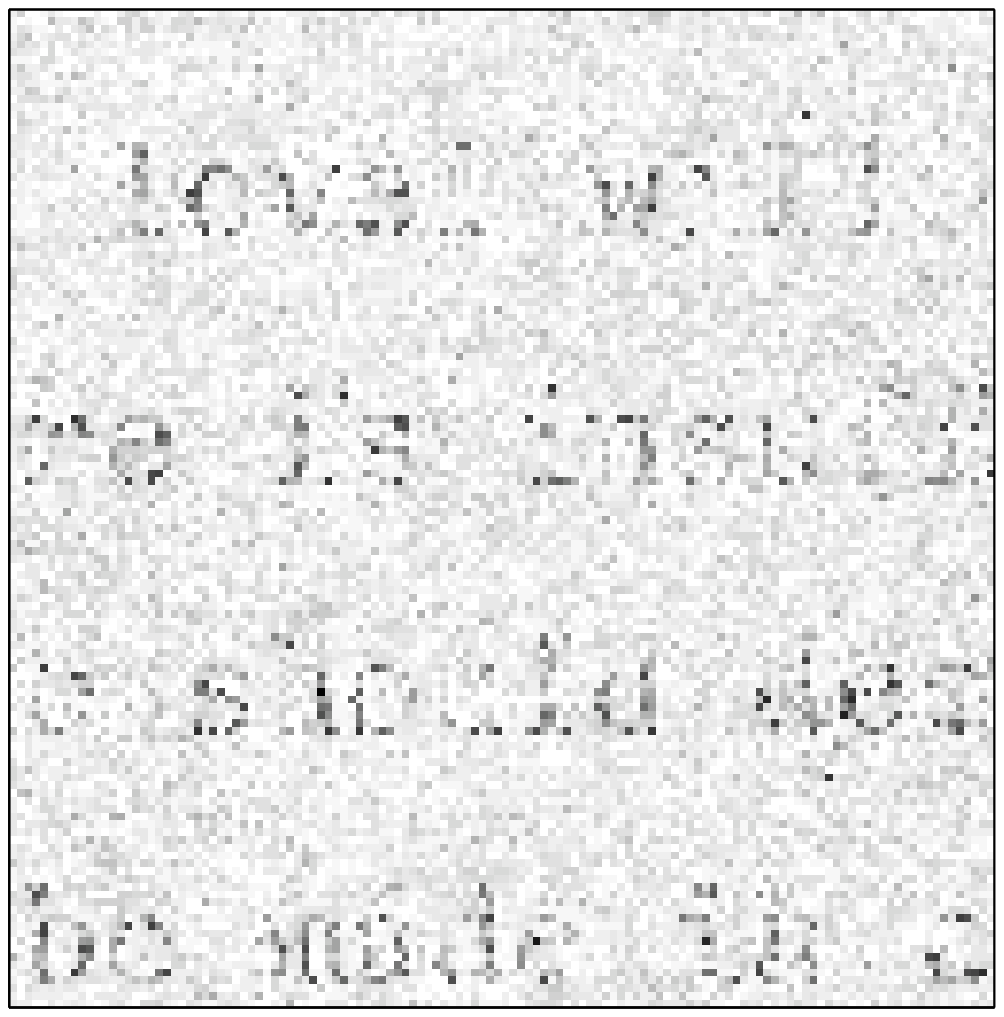}\\
$(a)$ & $(b)$ & $(c)$  
\end{tabular}
\caption{$(a)$ Denoising results and absolute reconstruction error with SC penalty using 3MG, $\lambda = 0.3$, $\delta = 0.07$, SNR = $20.41$ dB, MSSIM = $0.89$, $(b)$ with NSNC penalty using TRW, $\lambda = 350$, $\delta = 3.5$, SNR = $22.8$ dB, MSSIM = $0.93$, and $(c)$ with SNC\ref{ex:psi1} penalty using 3MG, $\lambda = 280$, $\delta = 7.25$, SNR = $22.74$ dB, MSSIM = $0.92$.}
\label{Fig:WordRec}
\end{figure}

\subsubsection{Influence of memory size}
\newtext[
We first analyze the effect of the memory size $m$ on the performance of our algorithm. 
We recall that the detailed performance analysis of 3MG algorithm with respect to the size of the memory was provided in~\cite{Chouzenoux11}, but it was restricted to the convex case. 
%However, according to our literature research, the nonconvex case has not been investigated yet.
The results in Tab.~\ref{Tab:DenoisItWordMM} illustrate that the choice where memory equals one, which corresponds to a subspace with size $2$, leads to the best results in terms of computational time. Hence, our experiments confirm the conclusions drawn in~\cite{Chouzenoux11} for the convex case. Consequently, $m=1$, i.e.] $\Db_k=[-\gb_k\;\left|\right.\; \xb_k-\xb_{k-1}]$ for all $k\ge 1$ \newtext[was retained for the remaining experiments presented in the paper, and the shorter notation 3MG is employed for denoting the 3MG-$1$ algorithm.
%A detailed performance analysis
%of 3MG algorithm with respect to the size of the memory was provided in \cite{Chouzenoux11} in the context of image restoration with convex penalization functionals. 
%According to Tab. \ref{Tab:DenoisItWordMM}, in the nonconvex case, the algorithm still performs %better when $m=1$, that is, when] $\Db_k=[-\gb_k\;\left|\right.\; \xb_k-\xb_{k-1}]$. Such a %result confirms the observation made in \cite[Sec.V.]{Chouzenoux11}. 
%\newtext[The setting $m=1$ is thus retained for the remaining experiments presented in the paper, and the shorter notation 3MG is employed for denoting the 3MG-$1$ algorithm.    
]

 \begin{table}[!hftbp]
\centering
\renewcommand{\arraystretch}{1.2}
\begin{tabular}{|c|c|c|c|c|c| }
\hline
Penalty function $(\lambda,\delta)$ & Algorithm & Iteration & Time & $F_\delta$ & SNR (dB)\\
%%\hline
%%\hline
%%SC $(0.3,0.07)$ 
%%& 3MG-$0$ & $657$ & $0.98$  & $2.7 \cdot 10^6$ & $20.41$ \\  
%%& 3MG-$1$ & $122$ & \underline{$0.22$} & $2.7 \cdot 10^6$ & $20.41$ \\  
%%& 3MG-$2$ & $119$ & $0.23$  & $2.7 \cdot 10^6$ & $20.41$ \\  
%%& 3MG-$3$ & $120$ & $0.25$ & $2.7 \cdot 10^6$ & $20.41$ \\  
%%& 3MG-$4$ & $120$ & $0.28$  & $2.7 \cdot 10^6$ & $20.41$ \\  
%%& 3MG-$5$ & $121$ & $0.32$  & $2.7 \cdot 10^6$ & $20.41$ \\ 
\hline
\hline
SNC\ref{ex:psi1} $(280,7.25)$ 
& 3MG-$0$ & $998$ & $1.08$  & $1.54 \cdot 10^6$ & $22.74$ \\   
& 3MG-$1$ & $270$ & \underline{$0.35$}  & $1.54 \cdot 10^6$ & $22.74$ \\   
& 3MG-$2$& $247$ & $0.38$   & $1.54 \cdot 10^6$ & $22.74$ \\   
& 3MG-$3$& $248$ & $0.44$   & $1.54 \cdot 10^6$ & $22.74$ \\   
& 3MG-$4$& $243$ & $0.51$   & $1.54 \cdot 10^6$ & $22.74$ \\   
& 3MG-$5$& $239$ & $0.59$   & $1.54 \cdot 10^6$ & $22.74$ \\  
\hline
\hline
SNC\ref{ex:psi2} $(301,8.76)$ 
& 3MG-$0$ & $536$ & $0.66$  & $1.59 \cdot 10^6$ & $22.55$ \\   
& 3MG-$1$ & $101$ & \underline{$0.21$}  & $1.59 \cdot 10^6$ & $22.55$ \\   
& 3MG-$2$& $159$ & $0.28$   & $1.59 \cdot 10^6$ & $22.55$ \\   
& 3MG-$3$& $158$ & $0.32$   & $1.59 \cdot 10^6$ & $22.55$ \\   
& 3MG-$4$& $156$ & $0.36$   & $1.59 \cdot 10^6$ & $22.55$ \\   
& 3MG-$5$& $155$ & $0.41$   & $1.59 \cdot 10^6$ & $22.55$ \\   
\hline
\hline
SNC\ref{ex:psi3} $(381,10)$ 
& 3MG-$0$ & $287$ & $0.61$  & $1.8 \cdot 10^6$ & $22.47$ \\   
& 3MG-$1$ & $69$ & \underline{$0.16$}  & $1.8 \cdot 10^6$ & $22.47$ \\   
& 3MG-$2$& $70$ & $0.19$   & $1.8 \cdot 10^6$ & $22.47$ \\   
& 3MG-$3$& $67$ & $0.21$   & $1.8 \cdot 10^6$ & $22.47$ \\   
& 3MG-$4$& $66$ & $0.22$   & $1.8 \cdot 10^6$ & $22.47$ \\   
& 3MG-$5$& $67$ & $0.28$   & $1.8 \cdot 10^6$ & $22.47$ \\   
\hline
\hline
SNC\ref{ex:psi4} $(386,9)$ 
& 3MG-$0$ & $202$ & $0.42$  & $1.8 \cdot 10^6$ & $22.48$ \\   
& 3MG-$1$ & $49$ & \underline{$0.11$}  & $1.8 \cdot 10^6$ & $22.48$ \\   
& 3MG-$2$& $51$ & $0.13$   & $1.8 \cdot 10^6$ & $22.48$ \\   
& 3MG-$3$& $51$ & $0.16$   & $1.8 \cdot 10^6$ & $22.48$ \\   
& 3MG-$4$& $52$ & $0.17$   & $1.8 \cdot 10^6$ & $22.48$ \\   
& 3MG-$5$& $52$ & $0.21$   & $1.8 \cdot 10^6$ & $22.48$ \\   
\hline
\end{tabular}
\caption{Denoising problem with \texttt{word} image. Influence of memory parameter $m$ in 3MG algorithm.}
\label{Tab:DenoisItWordMM} 
\end{table}

\subsubsection{Comparison with NLCG algorithm}

\newtext[
The NLCG algorithm is based on the following
iterations:
\begin{equation}
(\forall k \ge 1)\qquad \xb_{k+1} = \xb_k + \alpha_k (-\gb_k + \beta_k (\xb_k - \xb_{k-1})),
\end{equation}
where $\alpha_k > 0$ is the step size and $\beta_k\in \eR$ is the conjugacy
parameter. Tab.~\ref{Tab:DenoisItWordGCNL} summarizes the performances of NLCG
for five different conjugacy strategies described in \cite{Hager06}. Contrary to the
convex case, in the nonconvex case the conjugacy formula has a major influence
on the convergence speed (see Tab.~\ref{Tab:DenoisItWordGCNL} results related to
NLCG in rows 1-6 and 7-30). In particular the conjugacy strategies FR and DY 
do not appear well-adapted to the nonconvex problems. On the other hand, the HS, LS and PRP+
conjugacy parameters yield good numerical performance. Thus, they have been selected
for the numerical experiments in the following. For comparison, we include in
Tab.~\ref{Tab:DenoisItWordGCNL} the results of 3MG for $m=1$.  Although 
the superiority of 3MG versus NLCG is not established
theoretically, these experimental results are very promising.
They show that 3MG algorithm is faster than the considered  
non-linear conjugate gradient algorithms.
%The
%experiments presented in this section illustrate the good performance of some
%non-linear conjugate gradient algorithms i.e. HS, LS and PRP+.  However, 3MG
%algorithm leads to superior results.
%One can observe that the conjugacy formula has a major influence on the convergence speed of NLCG algorithm. 
%FR and LS lead to very low convergence rate in the case of nonconvex penalizations. 
%On the other hand, HS, LS and PRP+ conjugacy parameters yield good numerical performance of NLCG, for both convex and nonconvex case, and are thus retained for the following numerical experiments. 
%We reproduce in Tab.~\ref{Tab:DenoisItWordGCNL} the results of 3MG for $m=1$, of Tab. \ref{Tab:DenoisItWordMM}. 
%One can observed that, on this example, the five tested NLCG methods are outperformed by the 3MG algorithm. 
]

 \begin{table}[!hftbp]
\centering
\renewcommand{\arraystretch}{1.2}
\begin{tabular}{|c|c|c|c|c|c| }
\hline
Penalty function $(\lambda,\delta)$ & Algorithm & Iteration & Time & $F_\delta$ & SNR (dB)\\
\hline
\hline
SC $(0.3,0.07)$ 
& NLCG-HS & $138$ &  \underline{$0.84$}  & $2.7 \cdot 10^6$ & $20.41$ \\  
& NLCG-FR & $305$ & $1.86$  & $2.7 \cdot 10^6$ & $20.41$ \\  
& NLCG-PRP+ & $143$ & $0.87$ & $2.7 \cdot 10^6$ & $20.41$ \\  
& NLCG-LS & $158$ & $0.96$  & $2.7 \cdot 10^6$ & $20.41$ \\  
& NLCG-DY & $223$ & $1.35$  & $2.7 \cdot 10^6$ & $20.41$ \\ 
\cline{2-6}
& 3MG & $122$ & \underline{$0.22$} & $2.7 \cdot 10^6$ & $20.41$ \\  
\hline
\hline
SNC\ref{ex:psi1} $(280,7.25)$ 
& NLCG-HS & $1250$ & $2.34$ & $1.54 \cdot 10^6 $ & $22.74$\\
& NLCG-FR & $>10000$ & $-$ & $-$ & $-$\\
& NLCG-PRP+ & $292$ & \underline{$0.55$} & $1.54 \cdot 10^6 $ & $22.74$\\
& NLCG-LS & $320$ & $0.79$ &  $1.54 \cdot 10^6 $ & $22.74$\\
& NLCG-DY & $>10000$ & $-$ &  $-$ & $-$\\
\cline{2-6}
& 3MG & $270$ & \underline{$0.35$}  & $1.54 \cdot 10^6$ & $22.74$ \\   
\hline
\hline
SNC\ref{ex:psi2} $(301,8.76)$ 
& NLCG-HS & $112$ & \underline{$0.26$} & $1.59 \cdot 10^6$ & $22.55$\\
& NLCG-FR & $>10000$ & $-$ & $-$ & $-$\\
& NLCG-PRP+ & $179$ & $0.42$ & $1.59 \cdot 10^6$ & $22.55$\\
& NLCG-LS & $210$ & $0.54$ &  $1.59 \cdot 10^6$ & $22.55$\\
& NLCG-DY & $>10000$ & $-$ &  $-$ & $-$\\
\cline{2-6}
& 3MG & $101$ & \underline{$0.21$}  & $1.59 \cdot 10^6$ & $22.55$ \\   
\hline
\hline
SNC\ref{ex:psi3} $(381,10)$ 
& NLCG-HS & $102$ &  $1.1$  & $1.8 \cdot 10^6$ & $22.47$ \\  
& NLCG-FR & $3289$ & $36.3$  & $1.8 \cdot 10^6$ & $22.47$ \\  
& NLCG-PRP+ & $79$ & \underline{$0.9$} & $1.8 \cdot 10^6$ & $22.47$ \\  
& NLCG-LS & $90$ & $1$  & $1.8 \cdot 10^6$ & $22.47$ \\  
& NLCG-DY & $3342$ & $36.8$  & $1.8 \cdot 10^6$ & $22.47$ \\  
\cline{2-6}
& 3MG & $69$ & \underline{$0.16$}  & $1.8 \cdot 10^6$ & $22.47$ \\   
\hline
\hline
SNC\ref{ex:psi4} $(386,9)$ 
& NLCG-HS & $52$ &  \underline{$0.15$}  & $1.8 \cdot 10^6$ & $22.48$ \\  
& NLCG-FR & $>10000$ & $-$  & $-$ & $-$ \\  
& NLCG-PRP+ & $55$ & $0.16$ & $1.8 \cdot 10^6$ & $22.48$ \\  
& NLCG-LS & $56$ & $0.16$  & $1.8 \cdot 10^6$ & $22.48$ \\  
& NLCG-DY & $>10000$ & $-$  & $-$ & $-$ \\  
\cline{2-6}
& 3MG & $49$ & \underline{$0.11$}  & $1.8 \cdot 10^6$ & $22.48$ \\   
\hline
\end{tabular}
\caption{Denoising problem with \texttt{word} image. Influence of conjugacy parameter $\beta_k$ in NLCG algorithm.}
\label{Tab:DenoisItWordGCNL} 
\end{table}

 \subsubsection{Summary}
 \newtext[ We summarize the results by comparing the performance of continuous
 and discrete algorithms with SC, SNC and NSNC potential functions (see
 Tab.~\ref{Tab:DenoisItWord}).  One can observe that the considered discrete
 optimization algorithms lead to a SNR which is very similar to that obtained
 with smooth nonconvex regularization. However, they are more demanding in
 terms of computational time than 3MG.  Thus, we can conclude that the 3MG
 algorithm behaves well in comparison with the considered continuous and discrete
 algorithms.
%Table \ref{Tab:DenoisItWord} reports the computational results for SC,
%SNC and NSNC potential functions. 
%The 3MG algorithm outperforms the three
%considered descent algorithms, for both SC and SNC regularization strategies. 
%As
%far as the truncated quadratic penalty is concerned, the considered discrete
%optimization algorithms lead to a SNR which is very similar to the one obtained
%with smooth nonconvex regularization. However, Table \ref{Tab:DenoisItWord}
%shows that they are more demanding in terms of computational time than 3MG.
]
%BETA = 1 
 \begin{table}[!hftbp]
\centering
\renewcommand{\arraystretch}{1.2}
\begin{tabular}{|c|c|c|c|c|c| }
\hline
Penalty function $(\lambda,\delta)$ & Algorithm & Iteration & Time & $F_\delta$ & SNR (dB)\\
\hline
\hline
%CONVEX PROBLEM + BNDS - %icc time on blade12 !!
SC $(0.3,0.07)$ 
& 3MG & $122$ & \underline{$0.22$} & $2.7 \cdot 10^6$ & $20.41$ \\  
& \newtext[NLCG-HS] & $138$ &  $0.35$  & $2.7 \cdot 10^6$ & $20.41$ \\   
& \newtext[NLCG-PRP+] & \newtext[$143$] &  \newtext[$0.37$]  & \newtext[$2.7 \cdot 10^6$] & \newtext[$20.41$] \\   
& \newtext[NLCG-LS] & \newtext[$158$] & \newtext[$0.96$]  & \newtext[$2.7 \cdot 10^6$] & \newtext[$20.41$] \\  
& L-BFGS & $209$ & $0.73$  & $2.7 \cdot 10^6$ & $20.41$ \\  
& HQ & $670$ & $3.03$ & $2.7\cdot 10^6$ & $20.41$ \\  
\hline
\hline
%NON-CONVEX PROBLEM + BNDS - %icc time on blade12 !! %Penalty function (i)
SNC\ref{ex:psi1} $(280,7.25)$ 
& 3MG & $270$ & \underline{$0.35$}  & $1.54 \cdot 10^6$ & $22.74$ \\   
& \newtext[NLCG-HS] & $1250$ & $2.34$ & $1.54  \cdot 10^6$ & $22.74$ \\   
& \newtext[NLCG-PRP+] & \newtext[$292$] & \newtext[$0.55$] & \newtext[$1.54  \cdot 10^6$] & \newtext[$22.74$] \\  
& \newtext[NLCG-LS] & \newtext[$320$] & \newtext[$0.79$] & \newtext[$1.54 \cdot 10^6 $] & \newtext[$22.74$]\\ 
& L-BFGS & $332$ & $0.96$ & $1.54  \cdot 10^6$ & $22.73$\\  %plus petite valeur finale du critère
& HQ & $1025$ & $3.84$ & $1.54 \cdot 10^6$ & $22.74$ \\ 
\hline
\hline
%NON-CONVEX PROBLEM + BNDS - %icc time on blade12 !! %Penalty function (ii)
SNC\ref{ex:psi2} $(301,8.76)$ 
& 3MG & $101$ & \underline{$0.21$}  & $1.59 \cdot 10^6$ & $22.55$ \\   
& \newtext[NLCG-HS] & $112$ & $0.26$ & $1.59  \cdot 10^6$ & $22.55$ \\   
& \newtext[NLCG-PRP+] & \newtext[$179$] & \newtext[$0.42$] & \newtext[$1.59  \cdot 10^6$] & \newtext[$22.55$] \\ 
& \newtext[NLCG-LS] & \newtext[$210$] & \newtext[$0.54$] &  \newtext[$1.59 \cdot 10^6$] & \newtext[$22.55$]\\  
& L-BFGS & $351$ & $1.08$ & $1.59  \cdot 10^6$ & $22.55$\\  
& HQ & $604$ & $2.53$ & $1.59 \cdot 10^6$ & $22.54$ \\ %plus petite valeur finale du critère
\hline
\hline
%NON-CONVEX PROBLEM + BNDS  %icc time on blade12 !! %Penalty function (iii)
SNC\ref{ex:psi3} $(381,10)$ 
& 3MG & $69$ & \underline{$0.16$}  & $1.8 \cdot 10^6$ & $22.47$ \\   
& \newtext[NLCG-HS] & $102$ & $0.27$ & $1.8 \cdot 10^6$ & $22.47$ \\   
& \newtext[NLCG-PRP+] & \newtext[$79$] & \newtext[$0.21$] & \newtext[$1.8 \cdot 10^6$] & \newtext[$22.47$] \\
& \newtext[NLCG-LS] & \newtext[$90$] & \newtext[$1$]  & \newtext[$1.8 \cdot 10^6$] & \newtext[$22.47$] \\     
& L-BFGS & $94$ & $0.32$ & $1.8 \cdot 10^6$ & $22.46$\\  %plus petite valeur finale du critère
& HQ & $287$ & $1.36$ & $1.8 \cdot 10^6$ & $22.47$ \\ 
\hline
\hline
%NON-CONVEX PROBLEM + BNDS  %icc time on blade12 !! %Penalty function (iii)
SNC\ref{ex:psi4} $(386,9)$ 
& 3MG & $49$ & \underline{$0.11$}  & $1.8 \cdot 10^6$ & $22.48$ \\   
& \newtext[NLCG-HS] & $52$ & $0.15$ & $1.8 \cdot 10^6$ & $22.48$ \\   
& \newtext[NLCG-PRP+] & \newtext[$55$] & \newtext[$0.16$] & \newtext[$1.8 \cdot 10^6$] & \newtext[$22.48$] \\
& \newtext[NLCG-LS] & \newtext[$56$] & \newtext[$0.16$]  & \newtext[$1.8 \cdot 10^6$] & \newtext[$22.48$] \\     
& L-BFGS & $80$ & $0.25$ & $1.8 \cdot 10^6$ & $22.48$\\  %plus petite valeur finale du critère
& HQ & $202$ & $1.1$ & $1.8 \cdot 10^6$ & $22.48$ \\ 
\hline
\hline
%%%NON-CONVEX PROBLEM + BNDS
NSNC $(350,3.5)$  
& $\alpha$-EXP &   $4$   & $4.67 $   &  $1.31 \cdot 10^6$ & $22.69$\\ %C time on the lab pc : 4.97
& QCSM & $2 $ & \underline{$1.25 $} & $1.31 \cdot 10^6 $& $ 22.60$\\ %C time on the lab pc : 0.92
& TRW  & $5 $ & $ 1.65$ & $ 1.31 \cdot 10^6$ & $22.80$\\ %C time on the lab pc : 3.09
& BP   & $18 $ & $5.33 $ & $ 1.31 \cdot 10^6$& $22.73$\\ %C time on the lab pc : 11.73 
\hline
%\hline
\end{tabular}
\caption{Results for the denoising problem.}
\label{Tab:DenoisItWord}% beta = 1 
\end{table}

\subsection{Image segmentation}

\begin{figure}[!htbp]
\centering
\includegraphics[width=5cm]{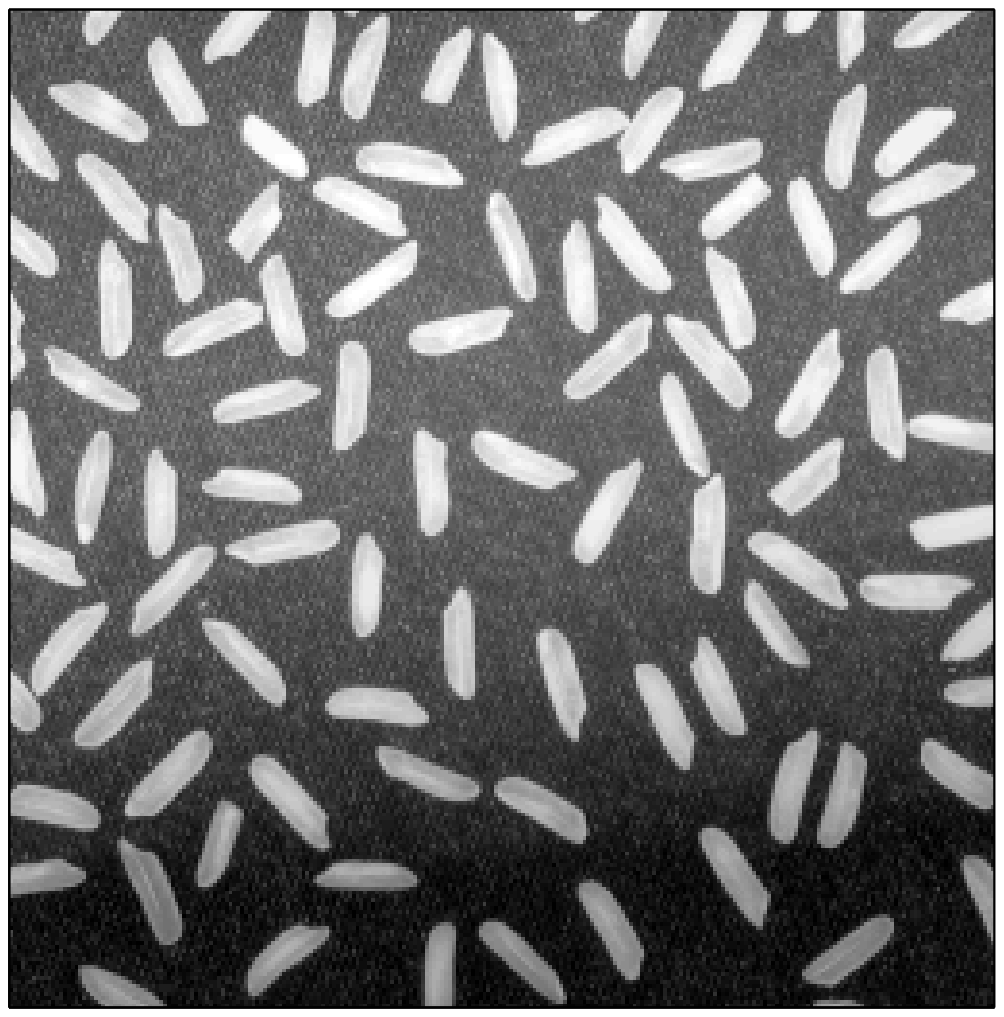}
\caption{\small Initial gray level image with $256 \times 256$ pixels.}
\label{Fig:Rice}
\vspace{0.7cm}
\begin{tabular}{ccc}
\includegraphics[width=5cm]{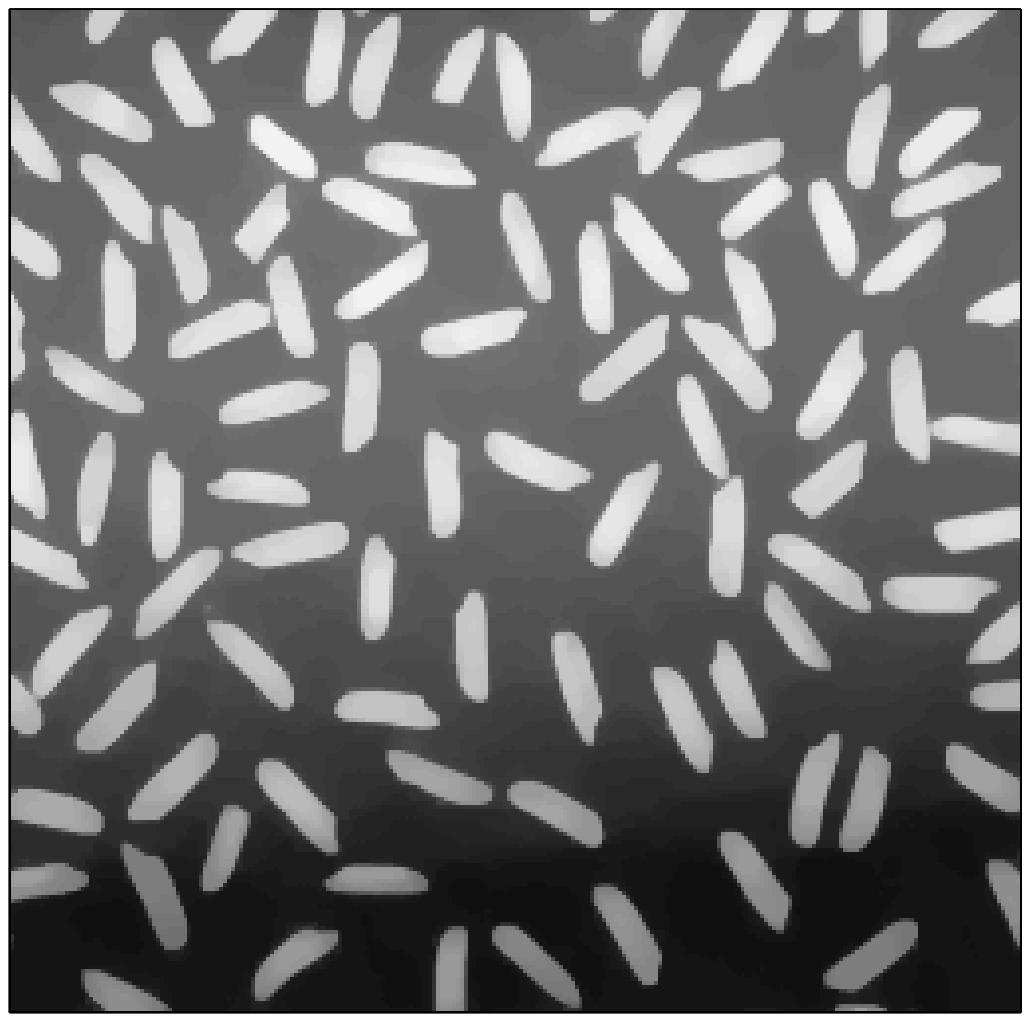}&
\includegraphics[width=5cm]{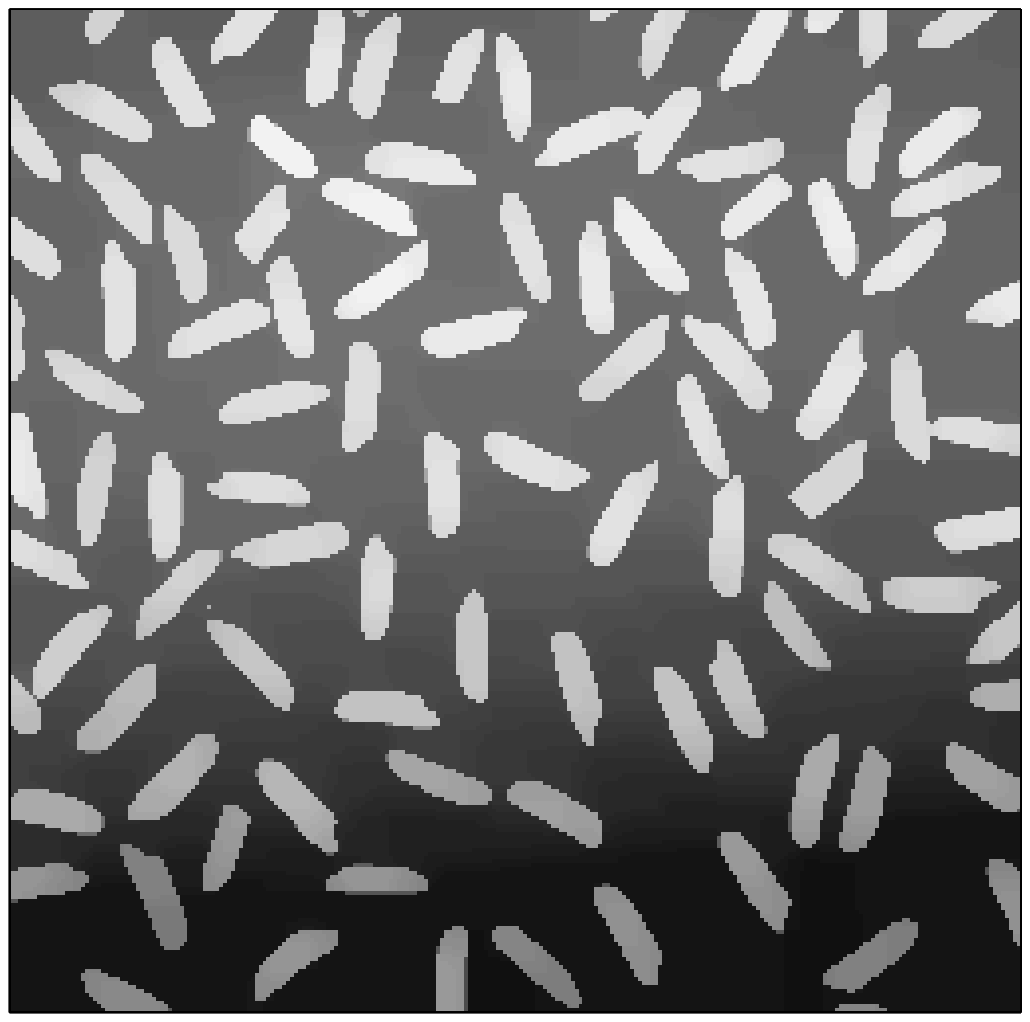}&
\includegraphics[width=5cm]{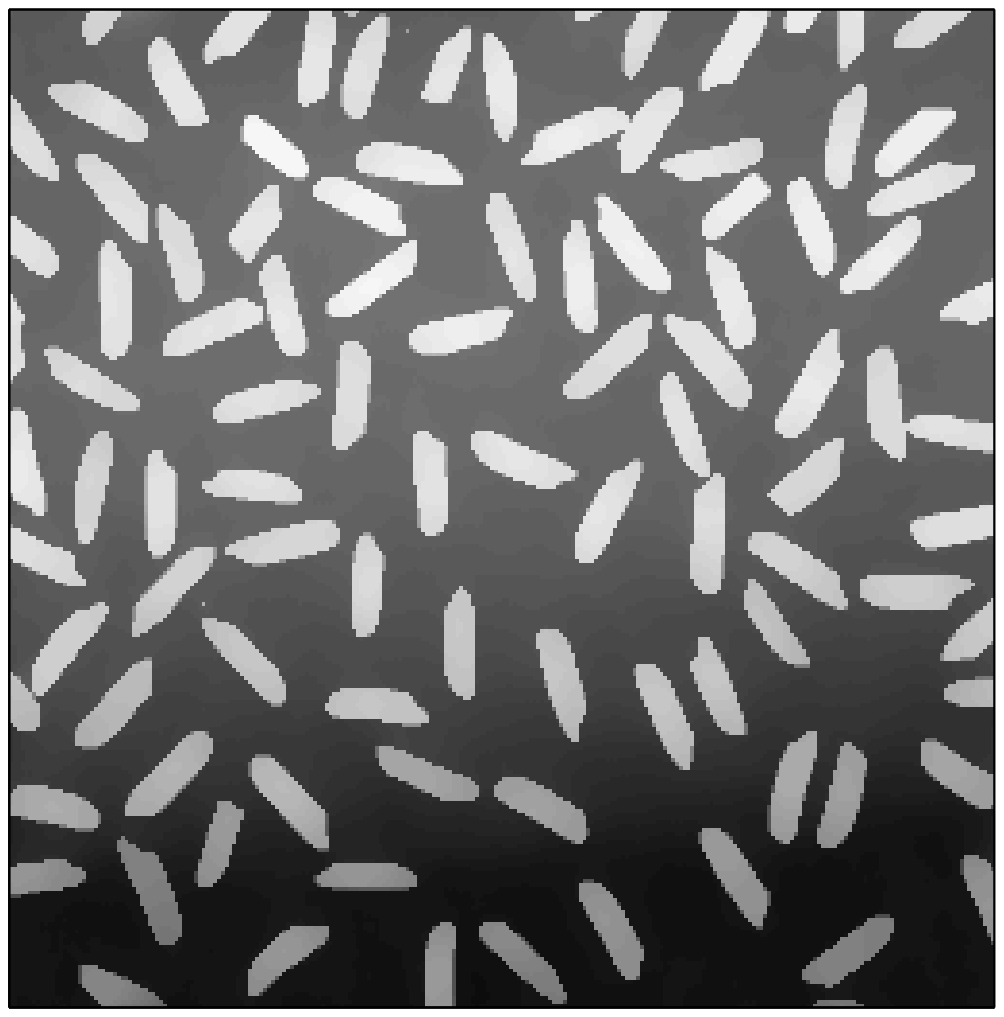}\\
\includegraphics[width=5cm]{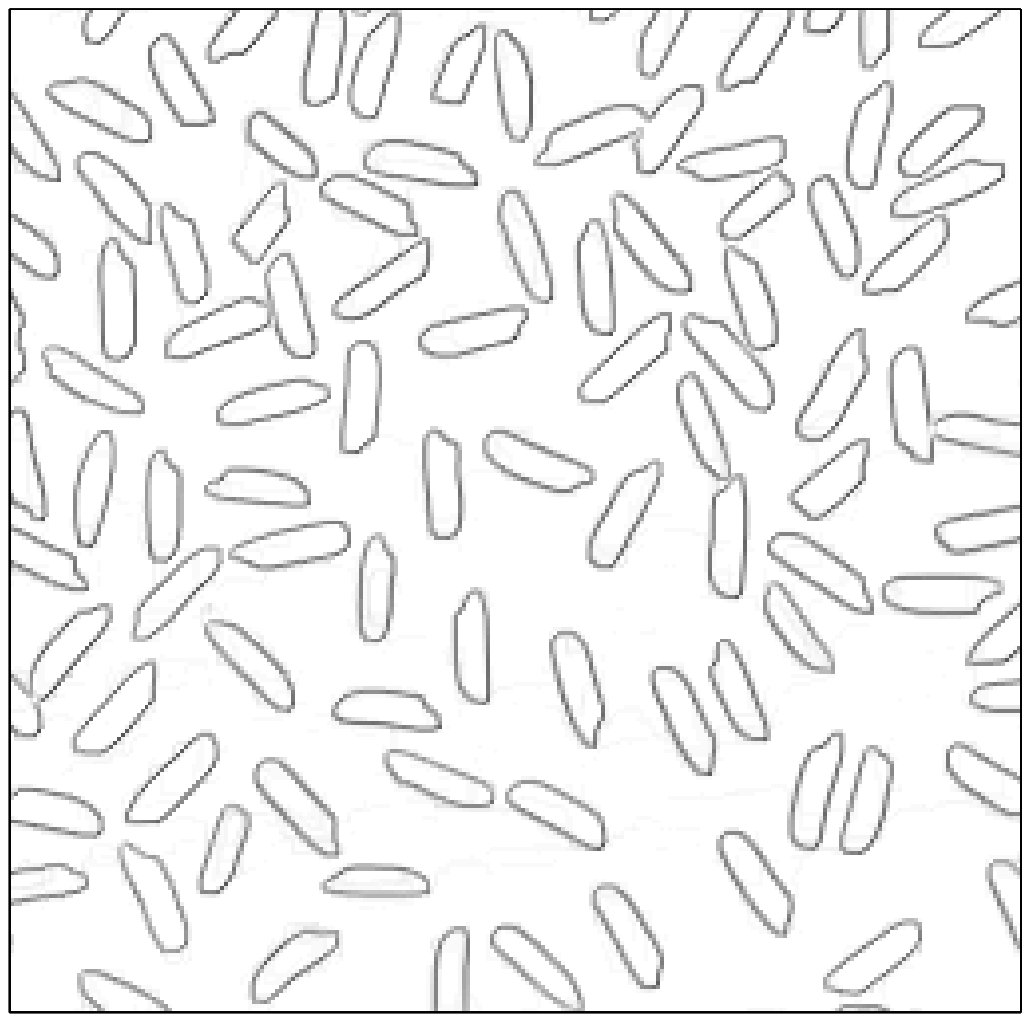}&
\includegraphics[width=5cm]{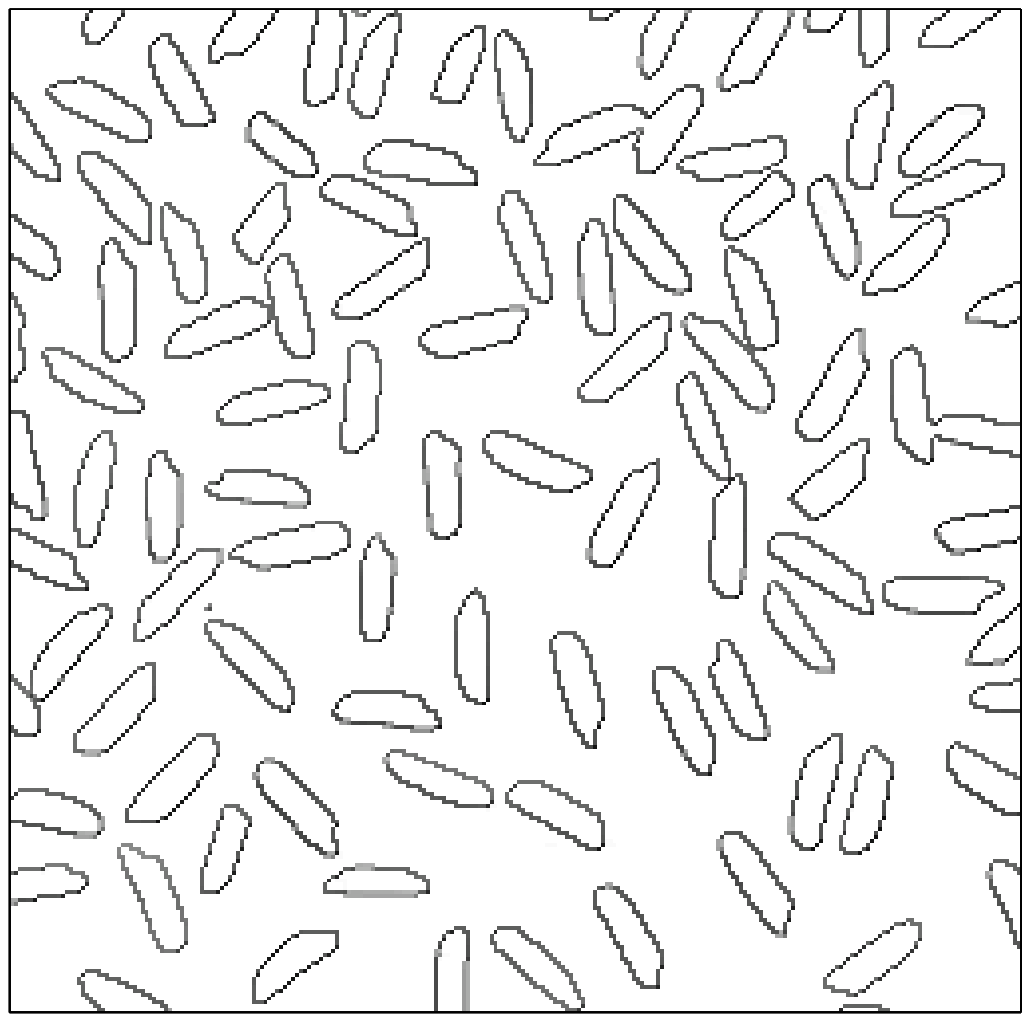}&
\includegraphics[width=5cm]{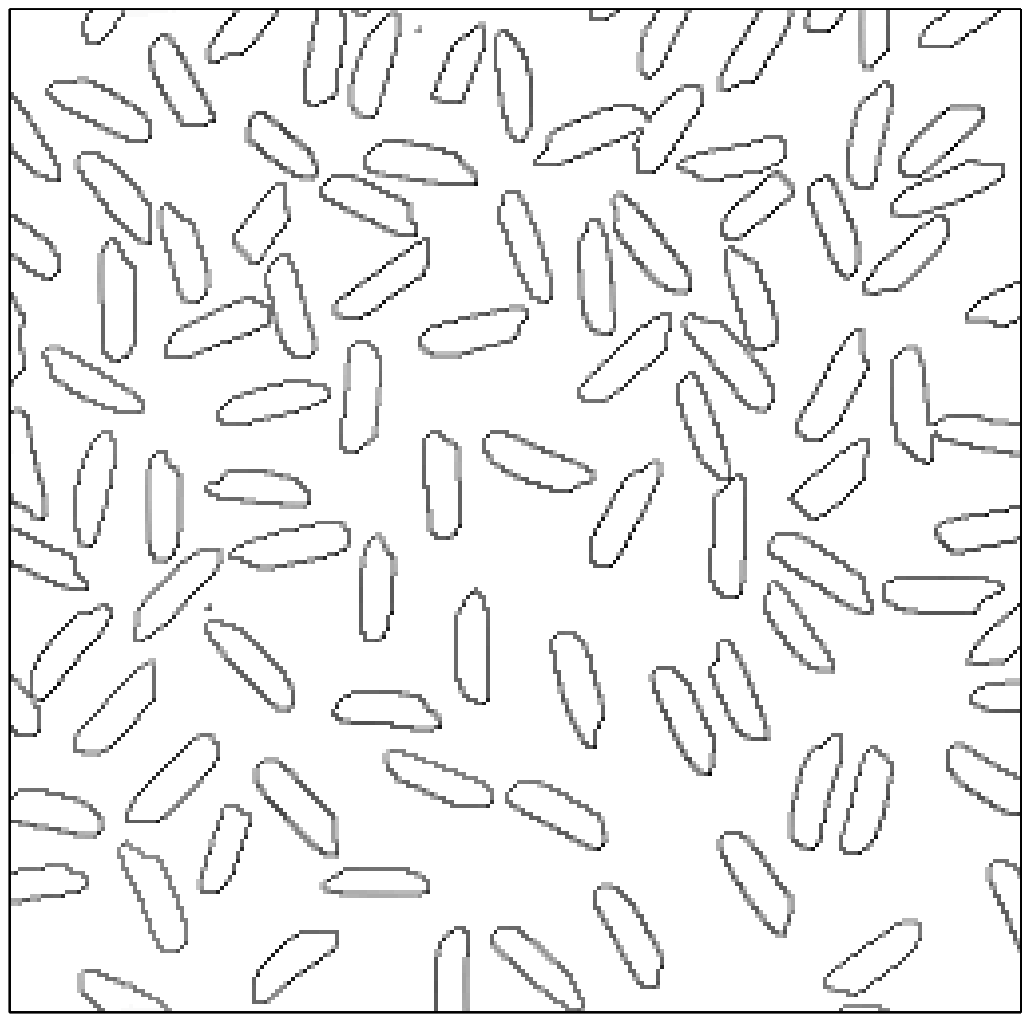}\\
$(a)$ & $(b)$ & $(c)$
\end{tabular}
\caption{$(a)$ Segmented images and their gradient for SC penalty using 3MG, $\lambda = 2$, $\delta = 0.2$, $(b)$ for NSNC penalty using TRW, $\lambda =  1550$, $\delta = 3.5$, and $(c)$ for SNC\ref{ex:psi2} penalty using 3MG, $\lambda = 1500$, $\delta = 8$.}
\label{Fig:RiceSeg}
\end{figure}

\begin{figure}[!htbp]
\centering
\begin{tabular}{ccc}
\includegraphics[width=5cm]{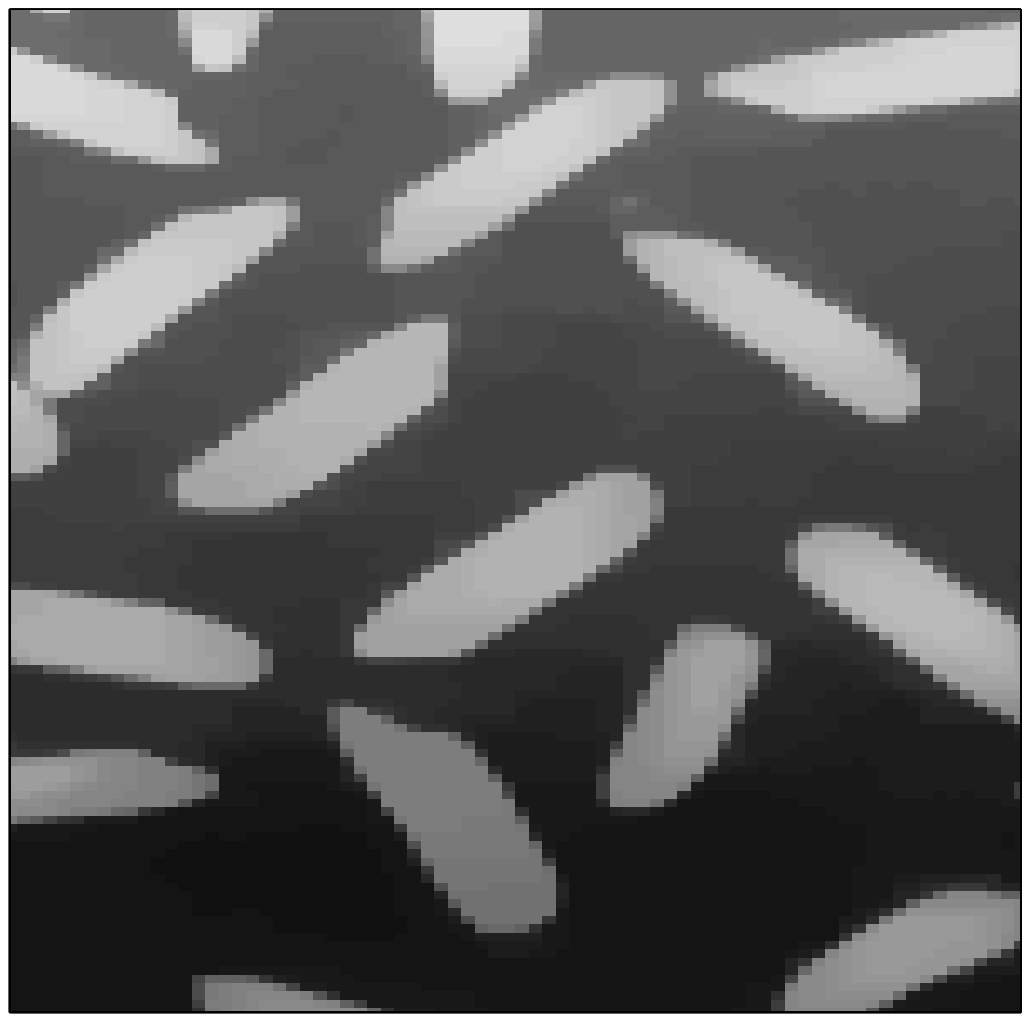}&
\includegraphics[width=5cm]{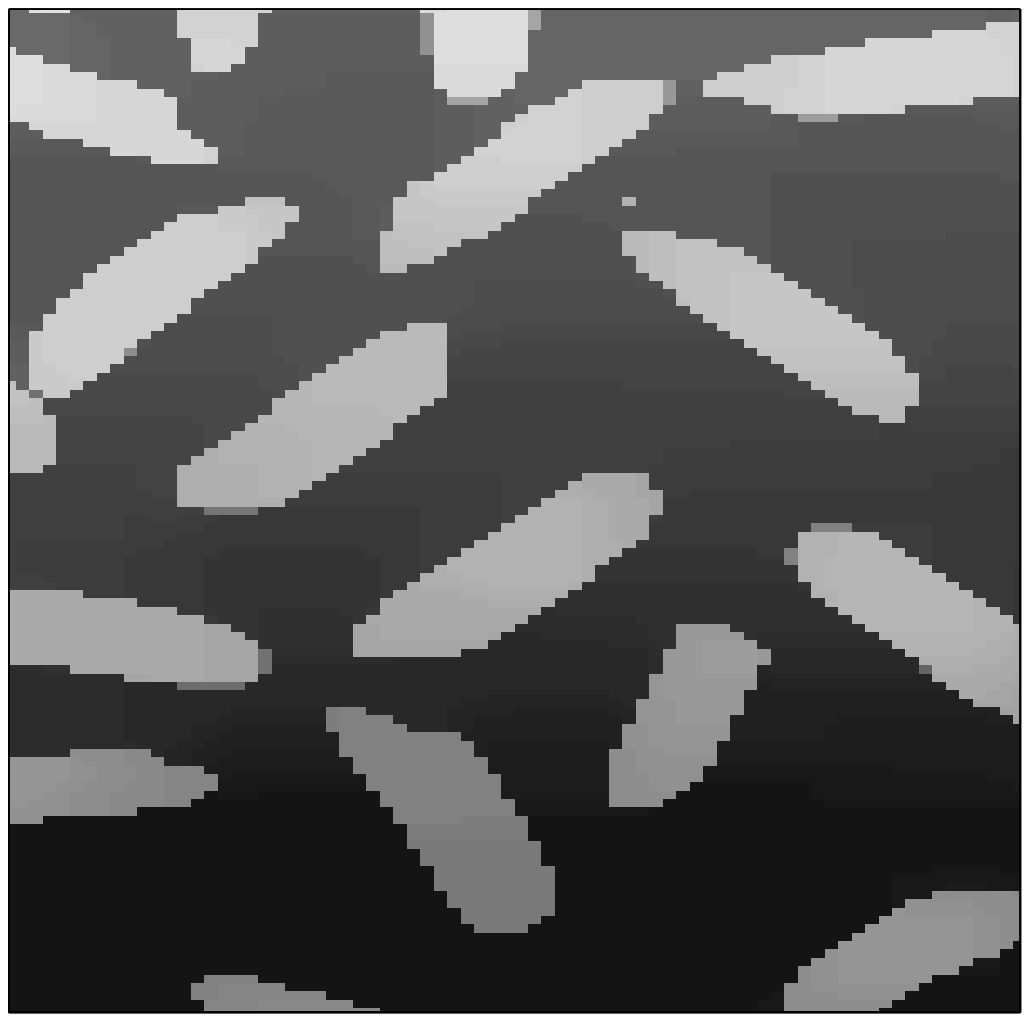}&
\includegraphics[width=5cm]{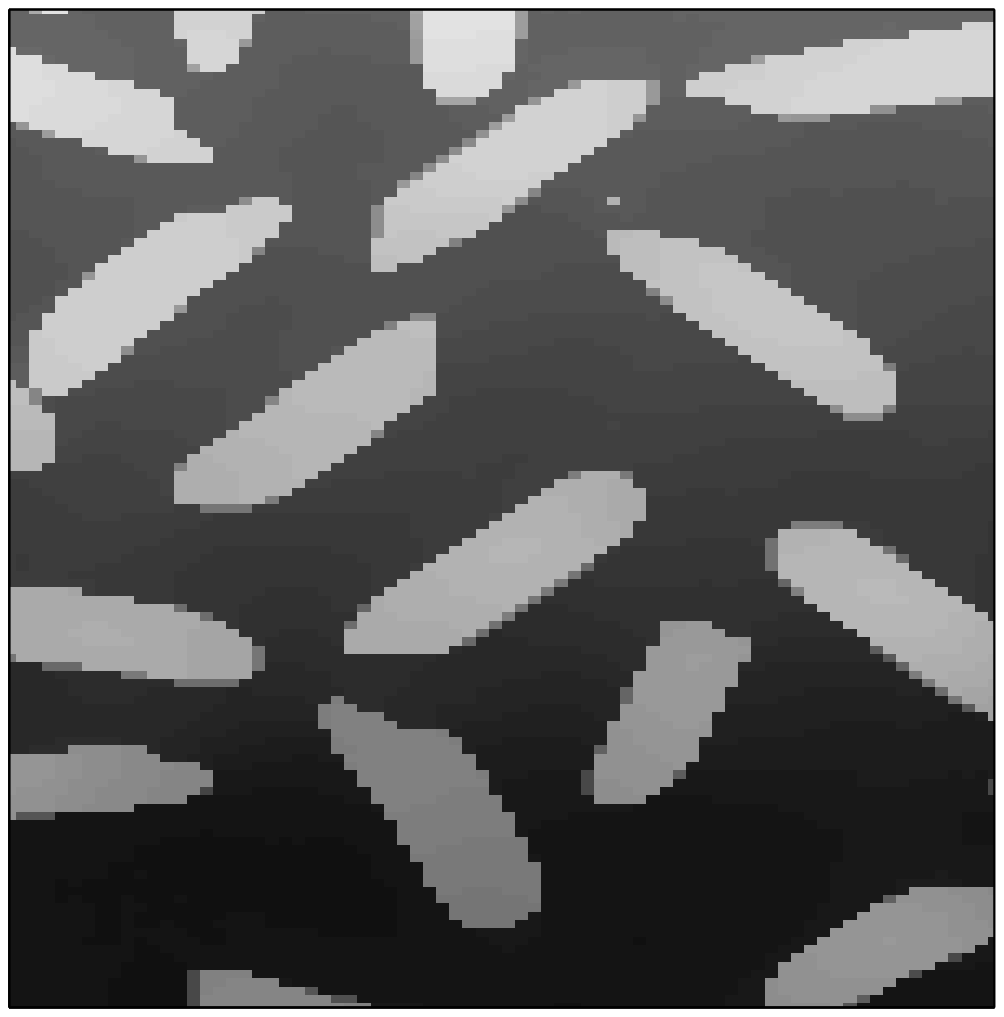}\\\\
\includegraphics[width=5cm]{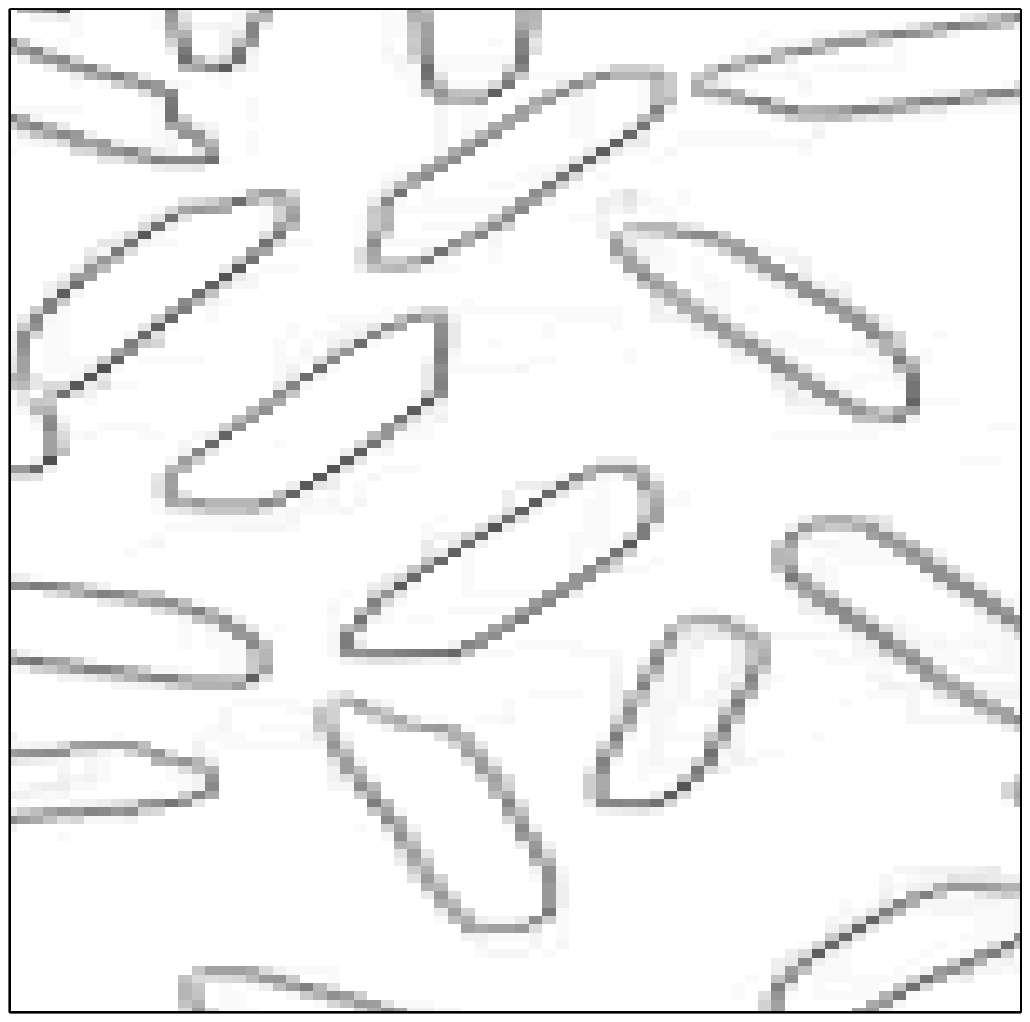}&
\includegraphics[width=5cm]{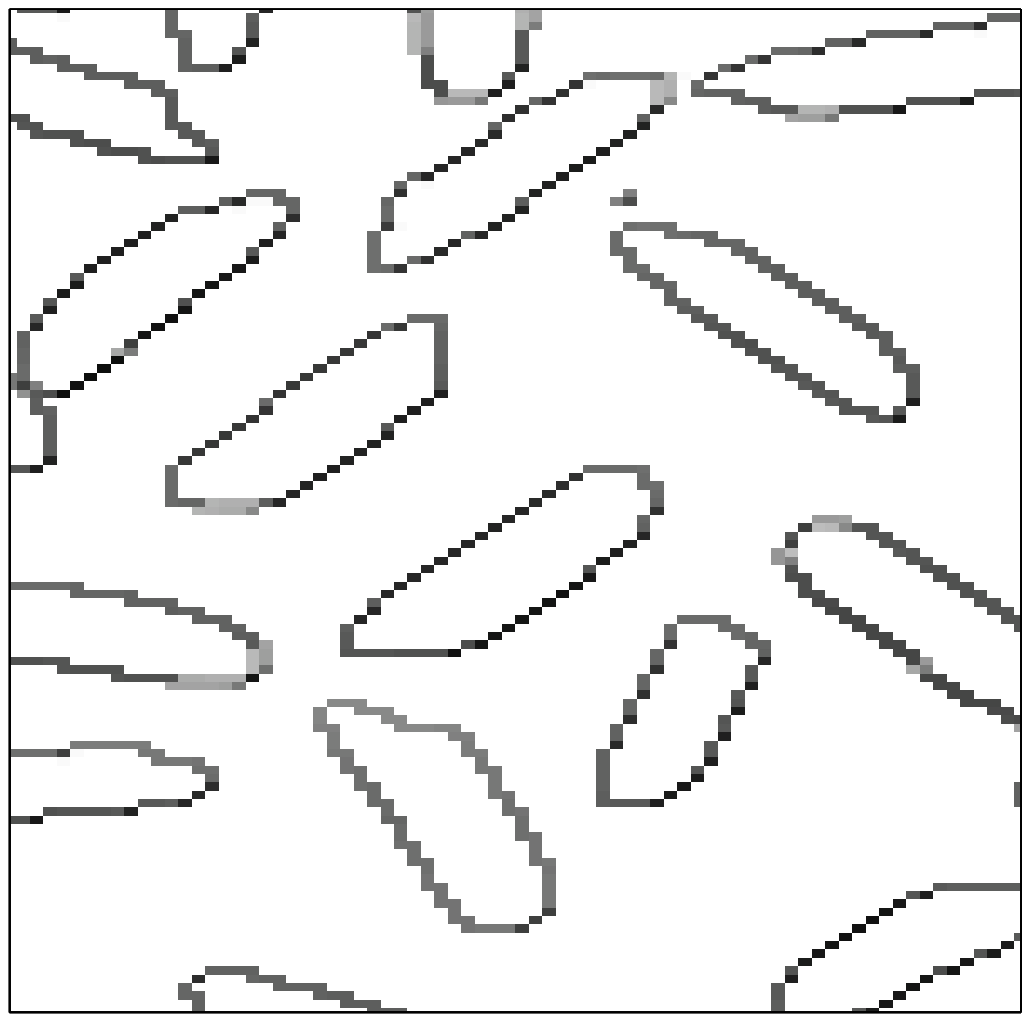}&
\includegraphics[width=5cm]{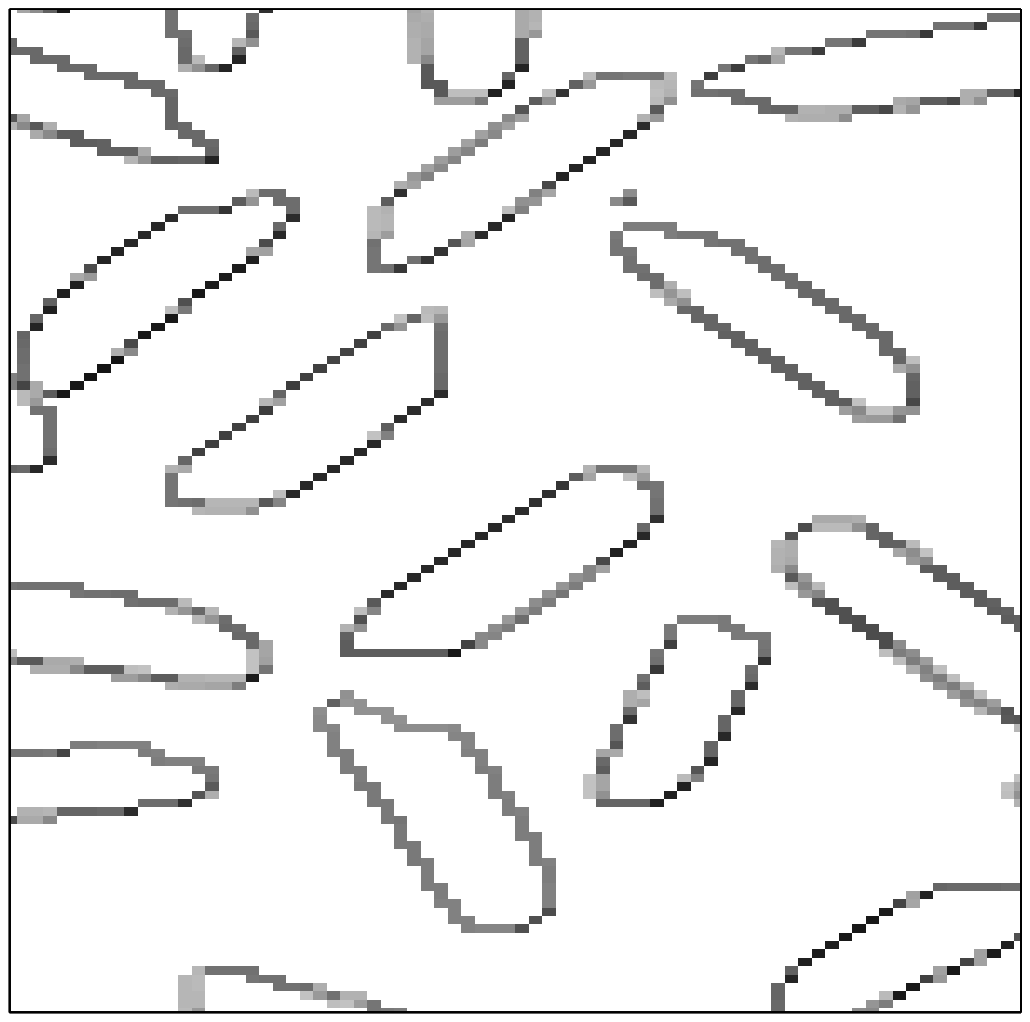}\\
$(a)$ & $(b)$ & $(c)$
\end{tabular}
\caption{Detail of segmented images and their gradient $(a)$ for SC penalty using 3MG, $\lambda = 2$, $\delta = 0.2$, $(b)$ for NSNC penalty using TRW, $\lambda =  1550$, $\delta = 3.5$, and $(c)$ for SNC\ref{ex:psi2} penalty using 3MG, $\lambda = 1500$, $\delta = 8$.}
\label{Fig:RiceSeg_zoom}
%Detail of segmented images and their gradient (a) for SC penalty
%using 3MG, ë = 2, ä = 0.2, (b) for NSNC penalty using TRW, ë = 1550, ä = 3.5, and (c)
%for SNC(iii) penalty using 3MG, ë = 1500, ä = 8.
\end{figure}

\begin{figure}
\centering
\vspace{0.7cm}
\includegraphics[width=14cm]{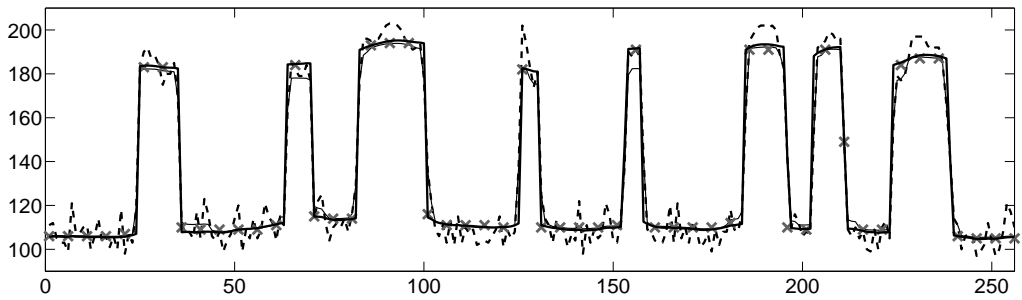}
\caption{Comparison of $50$th line of segmented images using SC (thin line), NSNC (crosses) and SNC\ref{ex:psi2} (thick line) potential functions. The $50$th line of the original image is indicated in dotted plot.}
\label{Fig:RiceSegLine}
\end{figure}

In the second experiment, we consider the segmentation of \texttt{Rice} image of
size $N=256 \times 256$ (see Figure \ref{Fig:Rice}). We define the segmented image
as a minimizer of $F_\delta$, where $\Hb = \Ib$, $\yb$ identifies with the
original image and $(\forall \zb\in \eR^N)$ $\Phi(\zb) =
\frac{1}{2} \|\zb\|^2$.
%Therefore, $\Phi$ has Lipschitz gradient with Lipschitz constant $\rho = 1$. 
The anisotropic penalization term is again used with $\tau = 0$ for the same
reason as earlier.  Figs.~\ref{Fig:RiceSeg} and \ref{Fig:RiceSeg_zoom}
illustrate the resulting images and their gradient for SC, NSNC and SNC\ref{ex:psi2} penalty functions, 
when regularization parameters $(\lambda,\delta)$ are
tuned in order to obtain the best visual results in terms of segmentation. The
gradients of the resulting images are evaluated by displaying, for every $n \in
\left\{1,\dots,N\right\}$, $G_n = \| \Deltab_n \widehat{\xb}\|$ with $\Deltab_n
= [\Deltab_n^{\mathrm{h}}\;\; \Deltab_{n}^{\mathrm{v}}]^\top \in \eR^{2\times
  N}$ where $\Deltab_n^{\mathrm{h}}\in \eR^N$ and $\Deltab_{n}^{\mathrm{v}} \in
\eR^N$ represent the first-order difference operators in the horizontal and
vertical directions. Finally, the intensity values along the (arbitrarily
chosen) $50$th line of each image are plotted in Figure \ref{Fig:RiceSegLine} to
better illustrate the behaviors of the different approaches.

%, with an isotropic penalization on neighing pixels.
%The following criterion is considered 
%\begin{equation}
%F_\delta(\xb) = \frac{1}{2}\|\xb - \yb\|^2 + \Psi_{\delta}(\xb). \label{Eq_CritJ_App}
%\end{equation}
%where $\Psi_{\delta}(.)$ is defined as in \eqref{Eq_Regul} with $\tau = 0$. 

 \begin{table}[h]
\centering
\renewcommand{\arraystretch}{1.2}
\begin{tabular}{|c|c|c|c|c| }
\hline
Penalty function$(\lambda,\delta)$ &Algorithm &  Iteration & Time & $F_\delta$ \\
\hline
\hline
% CONVEX PROBLEM - %icc time on blade12 !!
SC $(2,0.2)$ 
& 3MG &  $132$ &  \underline{$0.99$} & $6.69\cdot 10^6$ \\ 
& \newtext[NLCG-HS] & $144$ & $1.49$& $6.69\cdot 10^6$ \\  
& \newtext[NLCG-PRP+] & \newtext[$143$] & \newtext[$1.47$]& \newtext[$6.69\cdot 10^6$] \\  
& \newtext[NLCG-LS] &  \newtext[$148$] &  \newtext[$1.54$]  & \newtext[$6.69\cdot 10^6$] \\ 
& L-BFGS & $215$ & $3.44$ & $6.69\cdot 10^6$ \\  
& HQ  &   $898$ & $18.19$ & $6.69\cdot 10^6$ \\ 
\hline
\hline
% NON-CONVEX PROBLEM 
%SNC1 $(1200,8)$ %Penalization function (i) - %icc time on blade12 !!
%& 3MG  &  $515$ &  \underline{$2.94$} & $1.12\cdot 10^7$ \\%En Matlab : 26.1 
%& NLCG &   $1885$ & $15.89$ & $1.12\cdot 10^7$ \\   
%& L-BFGS &   $884$ & $11.53$ & $1.12\cdot 10^7$ \\  
%& HQ &  $6617$ & $110$ & $1.12\cdot 10^7$\\   
%\hline
%\hline
SNC\ref{ex:psi2} $(1500,8)$ %Penalization function (ii) - %icc time on blade12 !!
& 3MG  &  \newtext[$491$] &   \underline{\newtext[$3.43$]} & $1.59\cdot 10^7$ \\ 
& \newtext[NLCG-HS] &   $1578$ & $14.93$ & $1.59\cdot 10^7$ \\  
& \newtext[NLCG-PRP+] &  \newtext[$463$] & \newtext[$4.25$] & \newtext[$1.59\cdot 10^7$] \\ 
& \newtext[NLCG-LS] &  \newtext[$598$] & \newtext[$5.64$]   & \newtext[$1.59\cdot 10^7$] \\    
& L-BFGS &   $632$ & $9.57$ & $1.59\cdot 10^7$ \\  
& HQ &  $3553$ & $65.2$ & $1.59\cdot 10^7$\\   
%\hline
%\hline
%SNC3 $(1200,8)$ %Penalization function (iii) - %icc time on blade12 !!
%& 3MG  &  $275$ &  \underline{$2.35$} & $1.48\cdot 10^7$ \\%En Matlab : 18.2 
%& NLCG &   $306$ & $3.32$ & $1.48\cdot 10^7$ \\   
%& L-BFGS &   $338$ & $5.58$ & $1.48\cdot 10^7$ \\  
%& HQ &  $1262$ & $25.84$ & $1.48\cdot 10^7$\\   
\hline
\hline
NSNC $(1550,3.5)$ %icc time on blade12 !!
& $\alpha$-EXP  &   $9$   & $57.97$   &  $5.58\cdot 10^6$\\  
& QCSM & $1$   & $7.05$   &  $5.52\cdot 10^6$\\  
& TRW & $ 5$ & \underline{$6.71$} &  $5.52\cdot 10^6$\\  
& BP  & $50 $ & $61.83$ &   $5.52\cdot 10^6$\\  
\hline
\end{tabular} 
\caption{Results for the segmentation problem.}
\label{Tab:RiceIt}
\end{table}

According to Tab.~\ref{Tab:RiceIt}, the best performance in terms of
computational time is obtained by the 3MG algorithm with the SC
penalty. However, the convex penalization strategy leads to poor segmentation
results. Indeed, the boundaries of the reconstructed image are smooth and the
background suffers from staircasing effect. In contrast, the nonconvex penalties give
rise to truly piecewise constant images. The considered algorithms for the
truncated quadratic penalty lead to segmented images very similar to the one
obtained with SNC regularization. However, Tab.~\ref{Tab:RiceIt} shows that
they are more demanding in terms of computational time than 3MG.

\subsection{Image deblurring}

Our third experiment corresponds to the problem of restoring the
\texttt{montage} image $\overline{x}$, with size $256 \times 256$, from blurred
and noisy observations $\ub = \Rb \overline{\xb}+\wb$ where $\wb$ is a
realization of a zero-mean white Gaussian noise and $\Rb$ models a linear
uniform blur with size $3 \times 3$.  The recovery of the original image is performed by solving \eqref{Eq_CritJ} with $Q = 2N$,
\begin{equation*}
\Hb = \left[\begin{array}{c} \Rb \\ \Ib \end{array} \right] \qquad \yb = \left[\begin{array}{c} \ub \\ \zerob\end{array} \right],
\end{equation*}
and 
\begin{equation*}
(\forall \zb =(z_q)_{1\le q \le 2N})\qquad 
\Phi(\zb) = \frac{1}{2} \left(\sum_{q=1}^N z_q^2 + \beta \sum_{q=N+1}^{2N} d_B^2(z_q) \right),
\end{equation*}
where $d_B$ denotes the distance to the closed convex interval $B = [0,255]$ and
$\beta =0.01$. Furthermore, function $\Psi_{\delta}$ is given by \eqref{e:Psid} with $\tau = 10^{-10}$ and $S = 2N$. We consider, for every $s\in \{1,\ldots,N\}$, an isotropic regularization between neighbooring pixels, i.e., $P_s = 2$ and $\Vb_s = [\Deltab_s^{\mathrm{h}}\;\; \Deltab_s^{\mathrm{v}}]^\top$ where $\Deltab_s^{\mathrm{h}}\in \eR^N$ (resp.  $\Deltab_s^{\mathrm{v}} \in \eR^N$) corresponds to a horizontal (resp. vertical) gradient operator, and, for every $s\in \{N+1,\ldots,2N\}$, the Hessian-based penalization from \cite{Lefkimmiatis12} i.e., $P_s = 3$ and $\Vb_s = [\Deltab_s^{\mathrm{hh}}\;\; \sqrt{2}\Deltab_s^{\mathrm{hv}}\;\;\Deltab_s^{\mathrm{vv}}]^\top$ where $\Deltab_s^{\mathrm{hh}}\in \eR^N$, $\Deltab_s^{\mathrm{hv}}\in \eR^N$ and $\Deltab_s^{\mathrm{vv}}\in \eR^N$ model the second-order finite difference operators between neighbooring pixels, as described in \cite[Sec.III-A]{Lefkimmiatis12}. 
%When $s\in \{N+1,\ldots,2N\}$, the $\ell_2-\ell_1$ function $\psi_{s,\delta} \colon t \mapsto \rho (\sqrt{1 +
%  \froc{t^2}{(\theta \delta)^2}}-1)$, depending on the positive parameters, $\rho$ and $\theta$, %is considered.
For $s\in \{N+1,\ldots,2N\}$ we consider the $\ell_2-\ell_1$ function $\psi_{s,\delta} \colon t \mapsto \rho (\sqrt{1 +
  \froc{t^2}{(\theta \delta)^2}}-1)$, where $\rho$ and $\theta$ take positive values. 
%Table \ref{Tab:Montage} reports the computational results when $\psi_{s,\delta}$, $s\in %\{1,\ldots,N\}$, is either the SC or the SNC\ref{ex:psi1} potential function. 
Tab.~\ref{Tab:Montage} presents the results for SC and SNC\ref{ex:psi1} regularization of the image gradient (i.e. $\psi_{s,\delta}$ for $s\in \{1,\ldots,N\}$ ). 
%In each case 
%the choice of function $\psi$ is kept for all $s\in \{1,\ldots,N\}$.  
%Table \ref{Tab:Montage} reports the computational results when $\psi_{s,\delta}$, $s\in \{1,\ldots,N\}$, is either the SC or the SNC\ref{ex:psi1} potential function.       
   Parameters $(\rho,\theta,\lambda,\delta)$ are tuned to maximize the SNR of the restored image.
    In both cases, the 3MG algorithm outperforms the three considered descent
    algorithms in terms of time efficiency. Additionally, the nonconvex strategy
    leads to better results in terms of SNR (see Figure~\ref{Fig:Montage}). One can
    also observe that in this case the staircasing effect is reduced (see some
    image details in Figure~\ref{Fig:MontageRec}).  
\begin{figure}[!hftbp]
\centering
\begin{tabular}{cc}
\includegraphics[width=5cm]{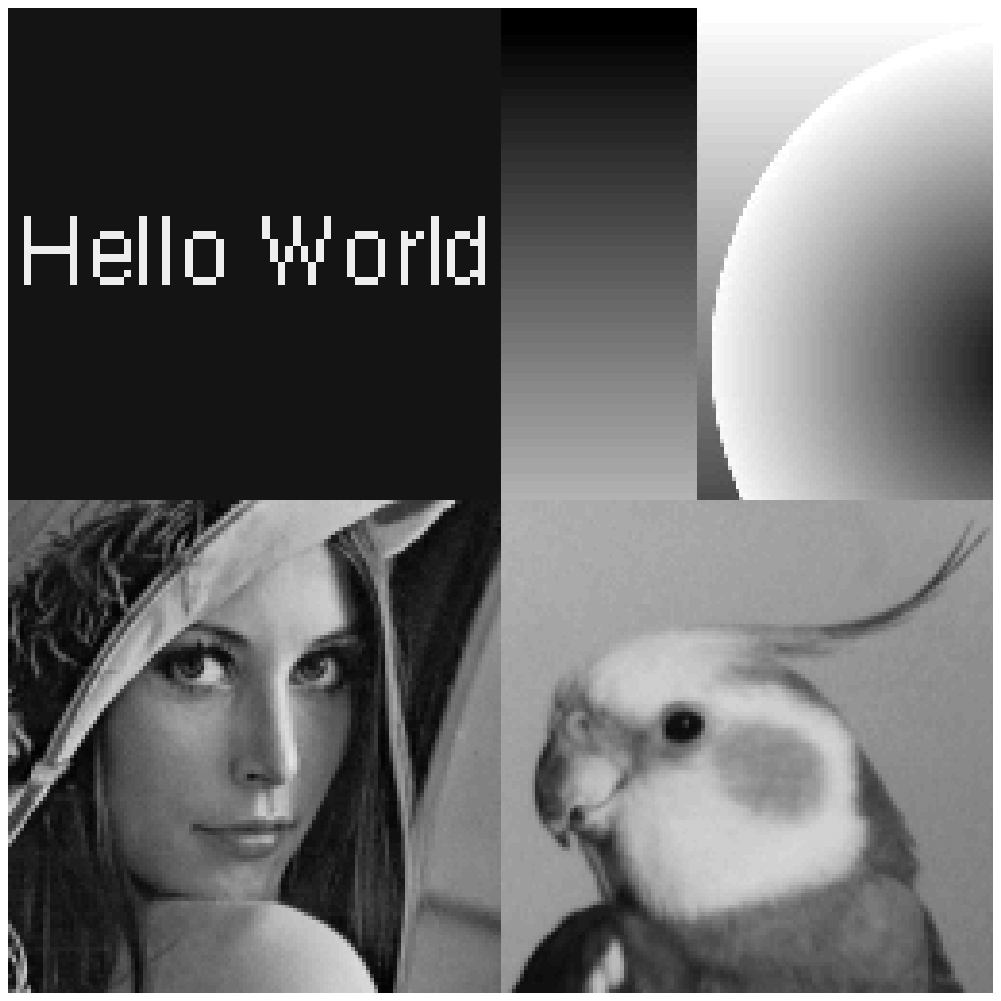}&
\includegraphics[width=5cm]{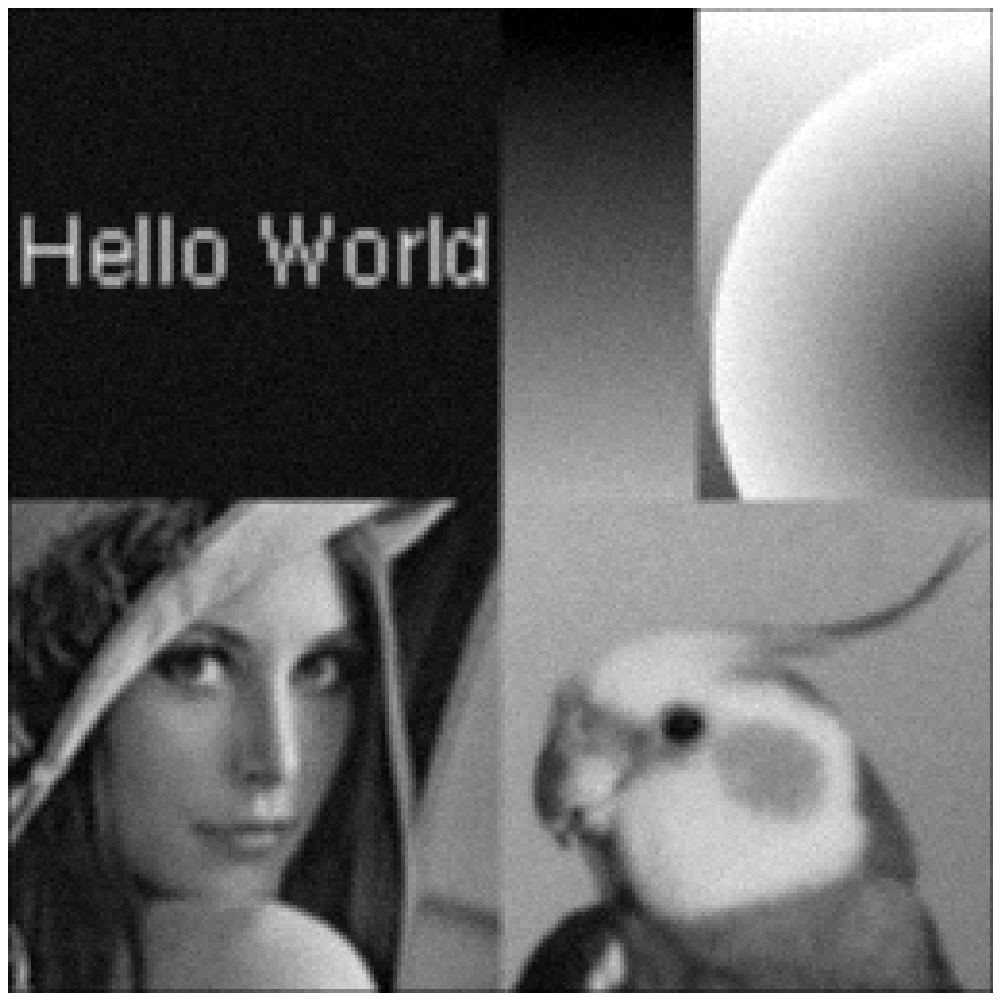}\\
$(a)$ & $(b)$ 
\end{tabular}
\caption{$(a)$ Original image with $256 \times 256$ pixels and $(b)$ blurred noisy image with SNR$= 18.65$ dB, MSSIM $= 0.82$, $3\times3$ uniform blur, noise standard deviation equal to $4$.}
\label{Fig:Montage}
\vspace{0.7cm}
\centering
\begin{tabular}{cc}
\includegraphics[width=5cm]{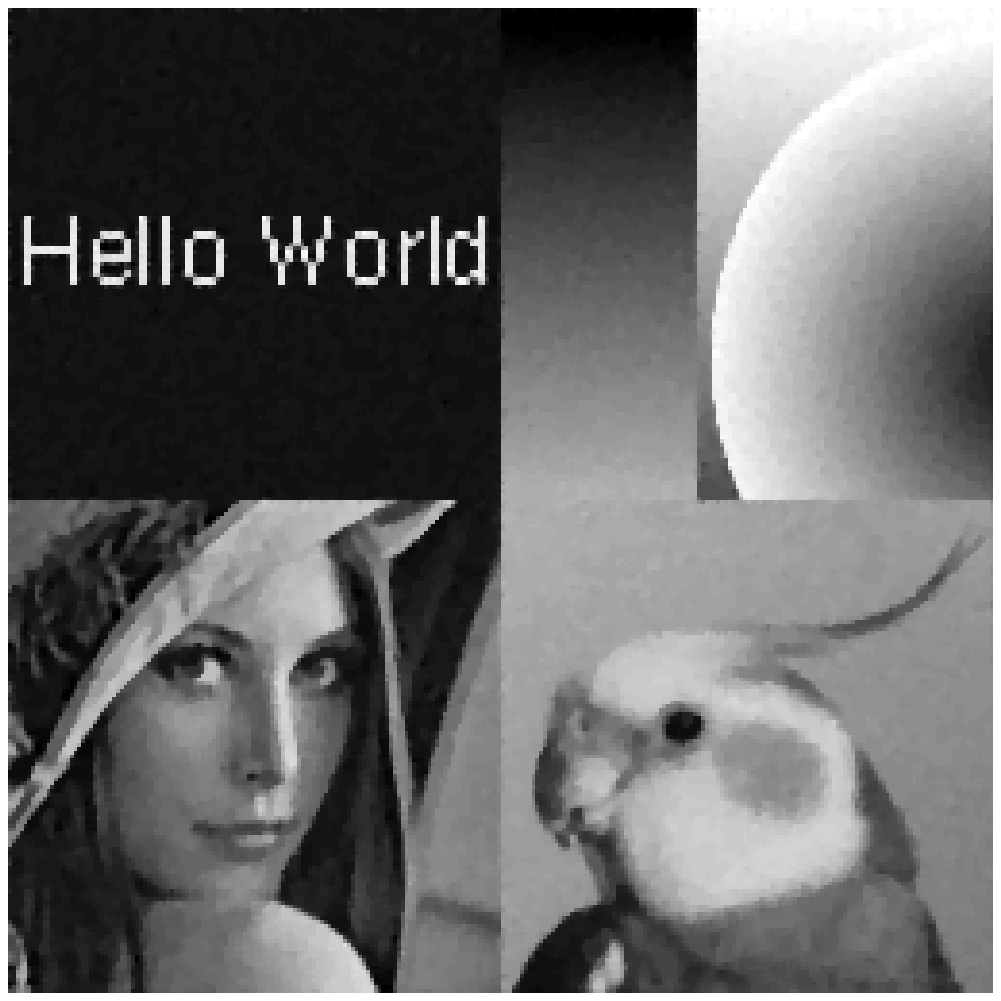} &
\includegraphics[width=5cm]{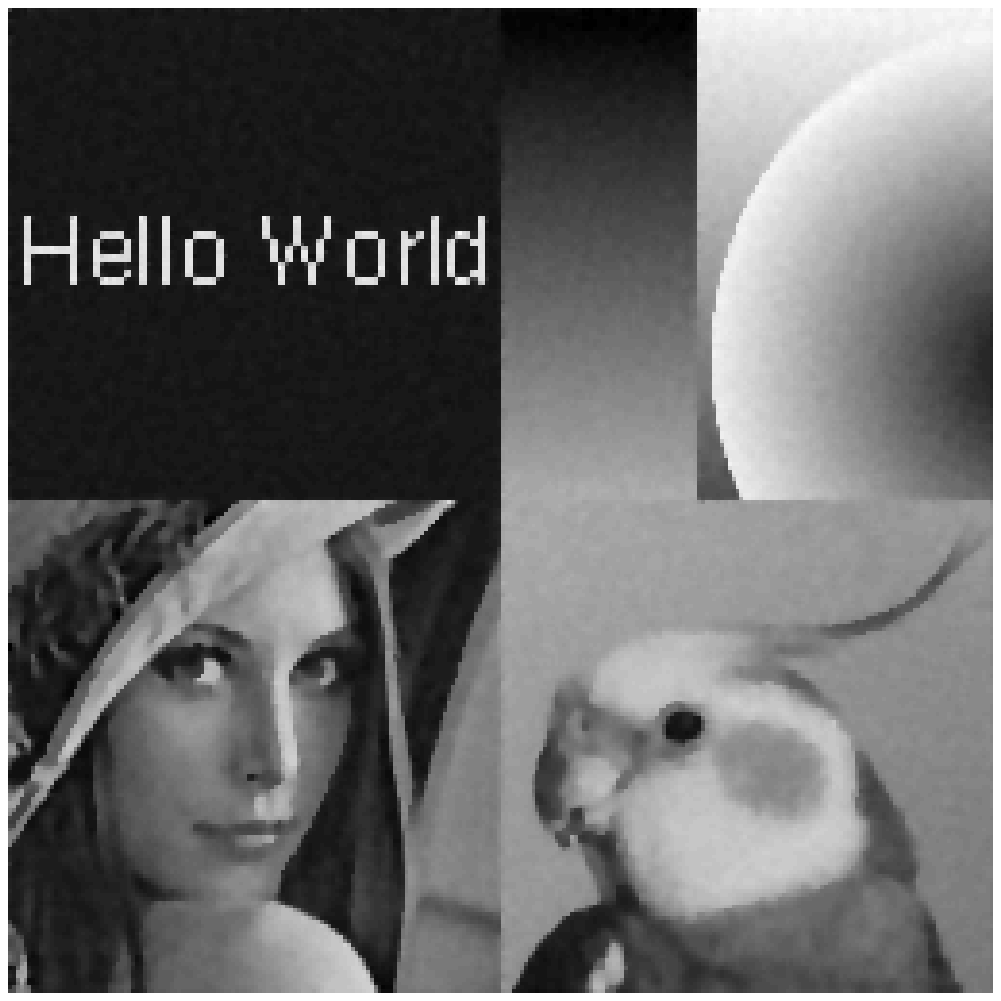}\\
\includegraphics[width=5cm]{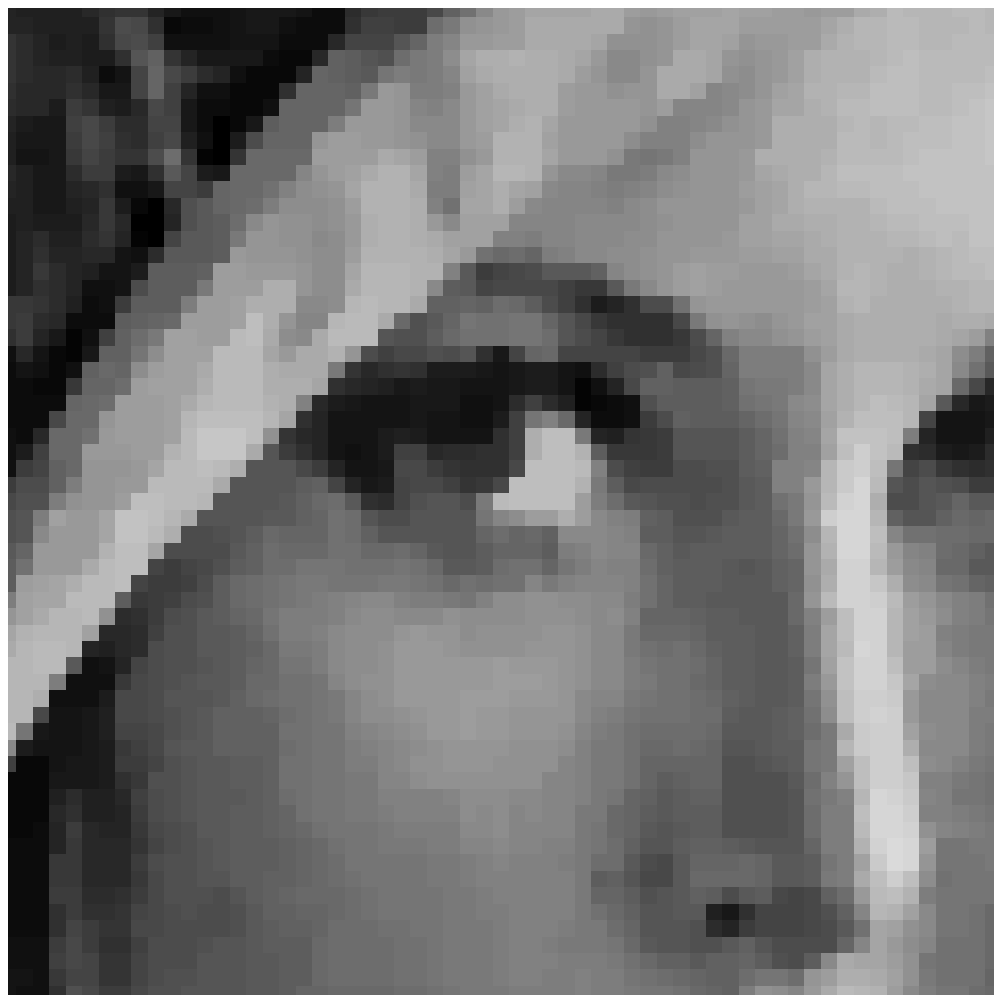} &
\includegraphics[width=5cm]{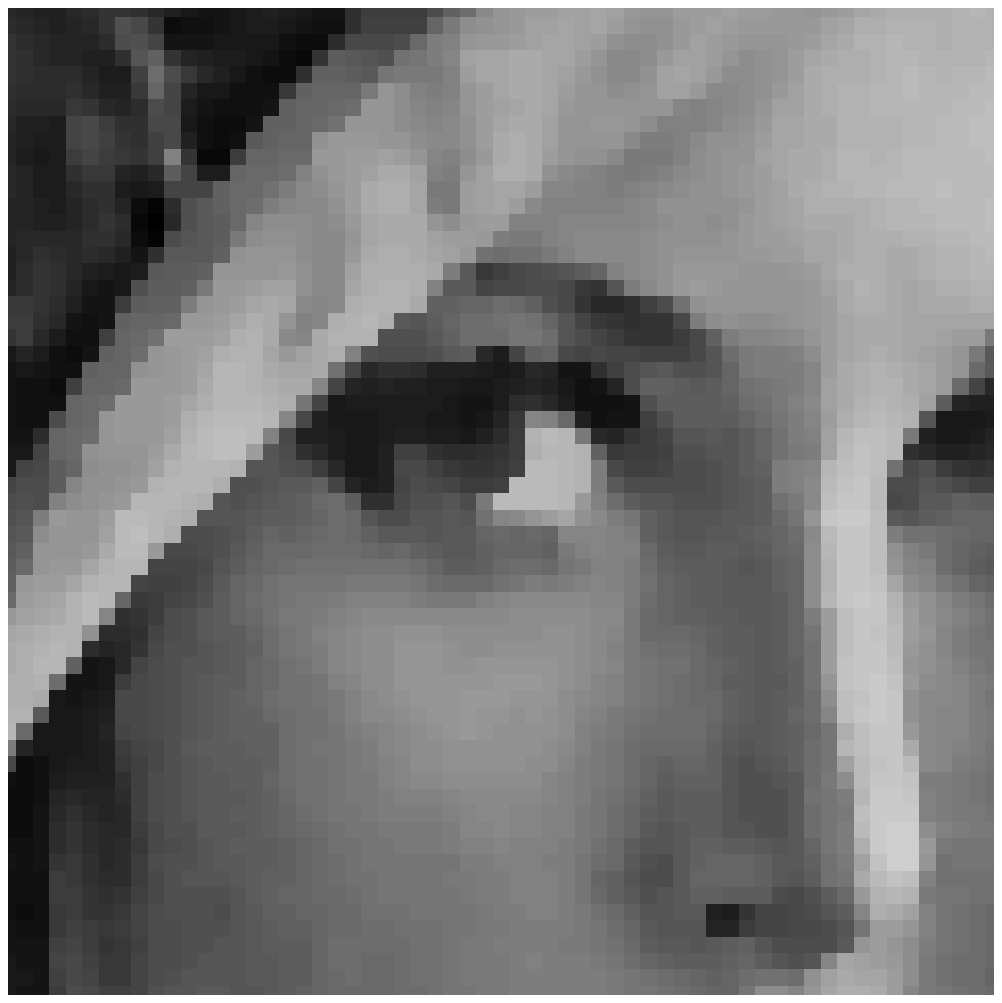}\\
\includegraphics[width=5cm]{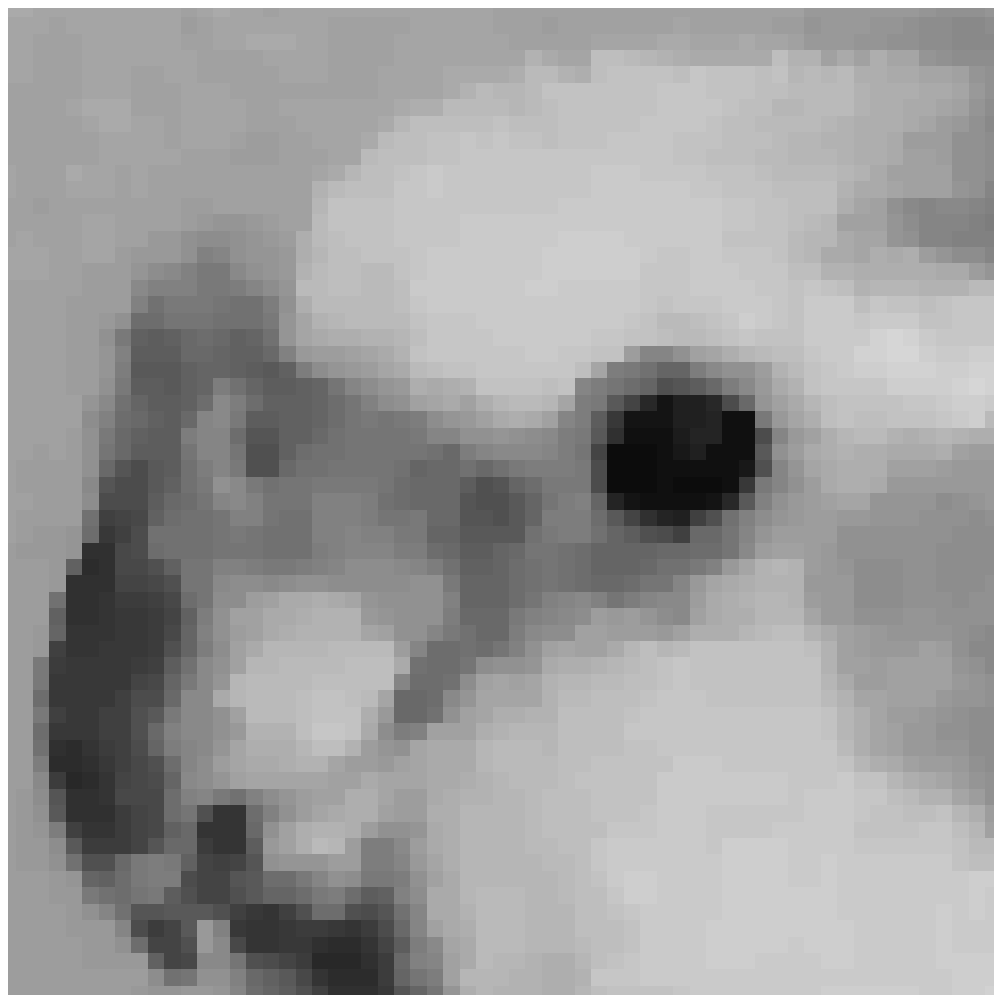} &
\includegraphics[width=5cm]{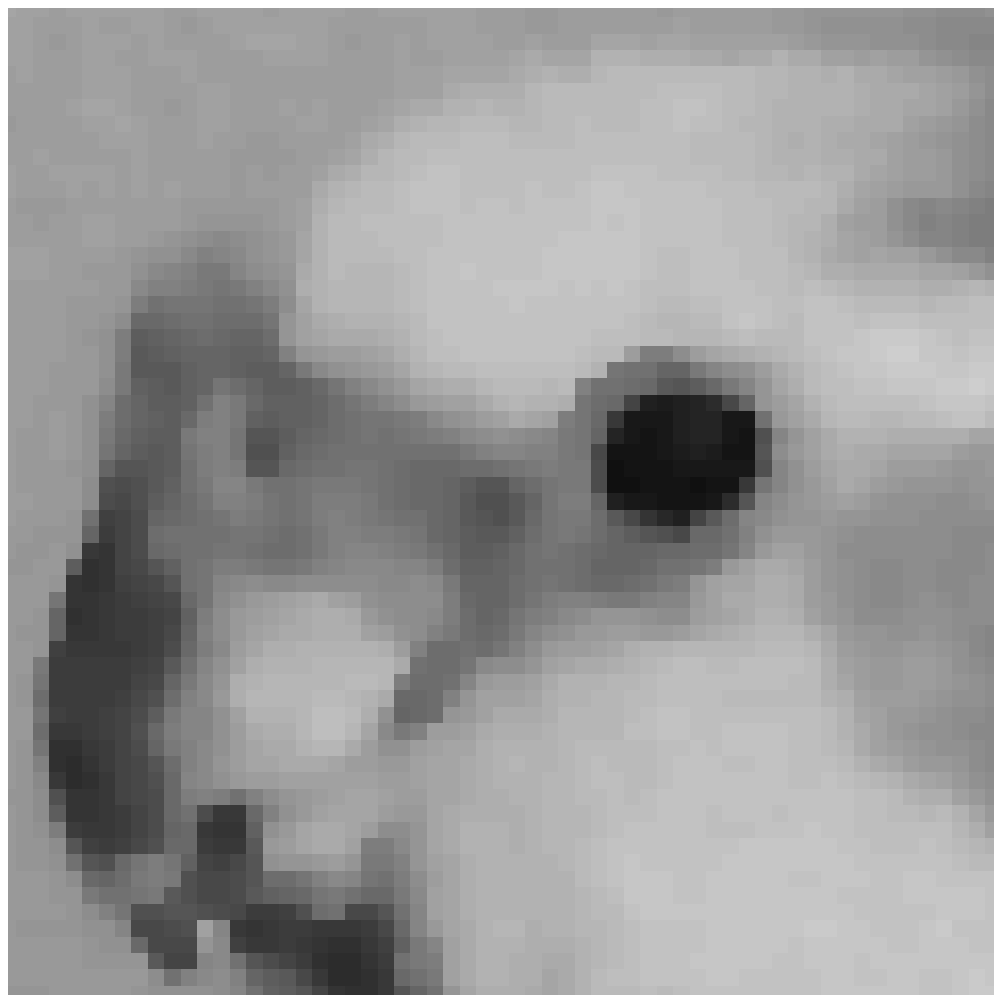}\\
$(a)$ & $(b)$
\end{tabular}
\caption{Deblurring results with $(a)$ SC penalty using 3MG, $\rho = 0.56$, $\theta = 0.18$, $\lambda = 0.042$, $\delta = 4.19$, SNR = $26.90$ dB, MSSIM = $0.94$ and $(b)$ with SNC\ref{ex:psi1} penalty using 3MG, $\rho = 41.55$, $\theta = 0.86$, $\lambda = 3.68$, $\delta = 18.65$, SNR = $27.69$ dB, MSSIM = $0.94$.}
\label{Fig:MontageRec}
\end{figure} 
 
  \begin{table}[h]
\centering
\renewcommand{\arraystretch}{1.2}
\begin{tabular}{|c|c|c|c|c|c| }
\hline
Penalty function$(\rho,\theta,\lambda,\delta)$ & Algorithm &  Iteration & Time & $F_\delta$ & SNR\\
\hline
\hline
% CONVEX PROBLEM - %icc time on blade12 !!
SC $(0.56,0.18,0.042,4.19)$ 
& 3MG &  $121$ & $\underline{8.36}$   & $8.22\cdot 10^6$ & $26.90$\\ %matlab time = 16.73
& NLCG-HS & $121$ & $8.92$& $8.22\cdot 10^6$ & $26.90$\\  %matlab time = 17.85
& NLCG-PRP+ & $129$  & $9.32$ & $8.22\cdot 10^6$ & $26.90$\\  %matlab time = 18.49
& NLCG-LS & $131$  & $9.51$  & $8.22\cdot 10^6$ & $26.90$\\ %matlab time = 19.05
& L-BFGS & $162$ & $12.42$ &$8.22\cdot 10^6$ & $26.90$\\  %matlab time = 24.85
& HQ  &   $418$ & $94.3$ & $8.22\cdot 10^6$ & $26.90$\\ %matlab time = 188.6
\hline
\hline
SNC\ref{ex:psi1} $(41.55,0.86,3.68,18.65)$ %Penalization function (ii) - %icc time on blade12 !!
& 3MG  & $196$  & $\underline{11.58}$   & $7.92\cdot 10^6$ & $27.69$\\ %matlab time = 23.17
& NLCG-HS & $243$ & $15.93$ & $7.92\cdot 10^6$ & $27.69$\\  %matlab time = 31.87
& NLCG-PRP+ & $221$ & $14.41$ & $7.92\cdot 10^6$ & $27.69$\\ %matlab time = 28.81
& NLCG-LS & $246$  & $15.62$   & $7.92\cdot 10^6$  & $27.69$\\ %matlab time = 31.35
& L-BFGS &   $216$ & $14.78$ & $7.92\cdot 10^6$ & $27.69$\\  %matlab time = 29.57
& HQ &  $616$ & $104.9$ & $7.92\cdot 10^6$ & $27.69$ \\   %matlab time = 209.8
\hline
\end{tabular} 
\caption{Results for the deblurring problem.}
\label{Tab:Montage}
\end{table}
 
%%%%%%%%%%%%%%%%%%%%%%%%%%%%%%%%%%%%%%%%%%%%%%%%%%%%%%%%%%%%%%%%%%%%%%%%%%%%%%%%%%%%%%%%%%%%%
 
\subsection{Image reconstruction}
 
In our last experiment, we consider the problem of reconstructing an image $\overline{\xb}\in \eR^N$ 
from noisy tomographic acquisitions, modeled as
\begin{equation}
\ub = \Rb \, \xb + \wb,
\end{equation} 
where $\Rb$ is the Radon projection matrix whose $(r,n)$ element ($1\le r \le
R$, $1 \le n \le N$) models the contribution of the $n$th pixel to the $r$th
datapoint, and $\wb$ represents an additive noise component. In this example, we
consider one slice of the standard \texttt{Zubal} phantom~\cite{Zubal94} with
dimensions $N=128 \times 128$, and $R = 46336$ measurements from $181$
projection lines and $256$ angles. This image is corrupted with a zero-mean
independent and identically distributed Laplacian noise (SNR = $23.5$
dB). Figure \ref{Fig:SebTomo} shows the original image and its noisy sinogram.

The reconstruction is performed by minimizing $F_{\delta}$ with $Q= R+N$,
\begin{equation}
\Hb = \left[\begin{array}{c} \Rb \\ \Ib \end{array} \right] \qquad \yb = \left[\begin{array}{c} \ub \\ \zerob\end{array} \right],
\end{equation}
and 
\begin{equation}
(\forall \zb = (z_q)_{1\le q \le Q})\qquad
\Phi(\zb) = \frac{1}{2}\left(\sum_{q=1}^{R} \sqrt{1 + (z_q/\rho)^2} + \beta \sum_{q = R+1}^{Q} d_B^2(z_q)\right)
\end{equation}
with $B = [0,255]$. Thus, $\Phi$ has a Lipschitz gradient with constant $L = \max(\frac{1}{2 \rho^2},\beta)$. In the sequel, we take $\beta = 10^{-2}$. Furthermore, the regularization function \eqref{e:Psid}, with $\tau = 10^{-10}$ and an isotropic edge-preserving penalty is considered i.e., $S = N$ and, for every $s\in \{1,\ldots,N\}$, $P_s = 2$ and $\Vb_s = [\Deltab_s^{\mathrm{h}}\;\; \Deltab_s^{\mathrm{v}}]^\top$ where $\Deltab_s^{\mathrm{h}}\in \eR^N$ (resp.  $\Deltab_s^{\mathrm{v}} \in \eR^N$) corresponds to a horizontal (resp. vertical) gradient operator.

\begin{figure}[!htbp]
\centering
\begin{tabular}{cc}
\includegraphics[height=5cm]{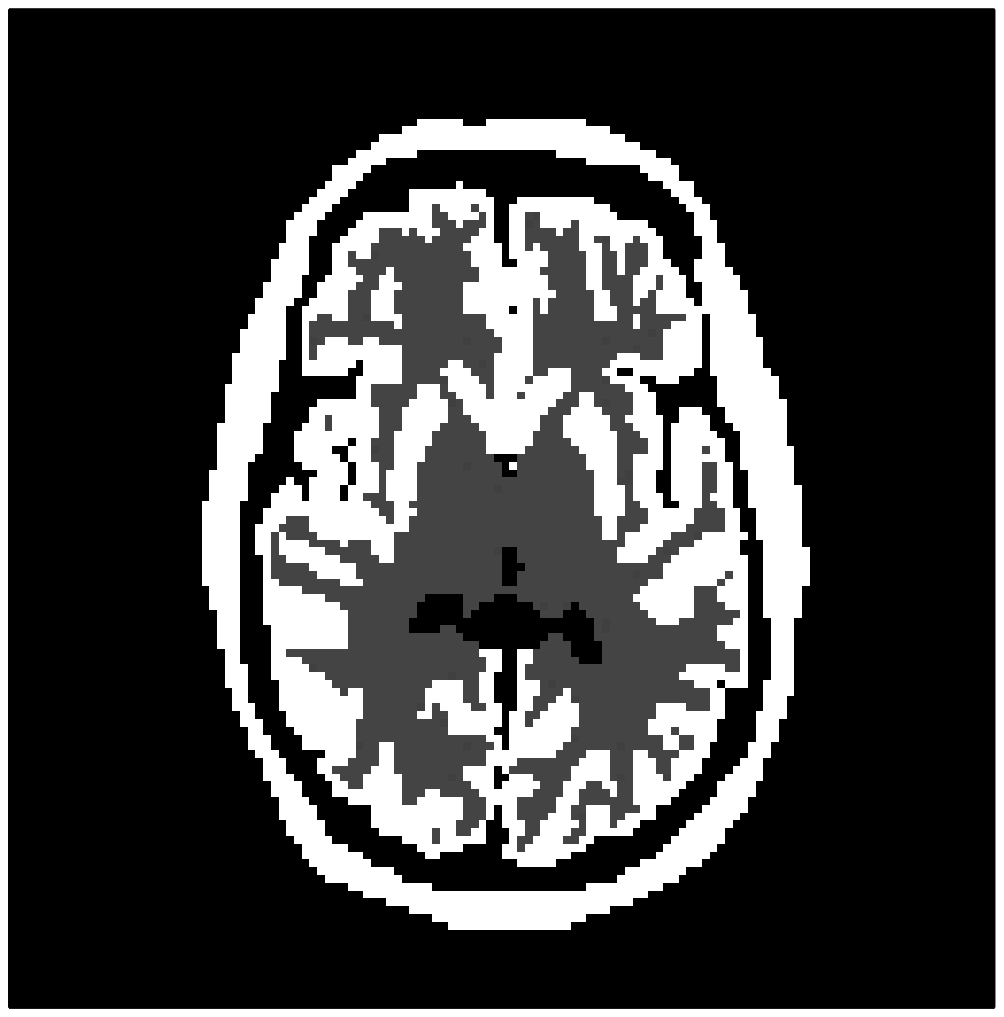}&
\includegraphics[height=5cm]{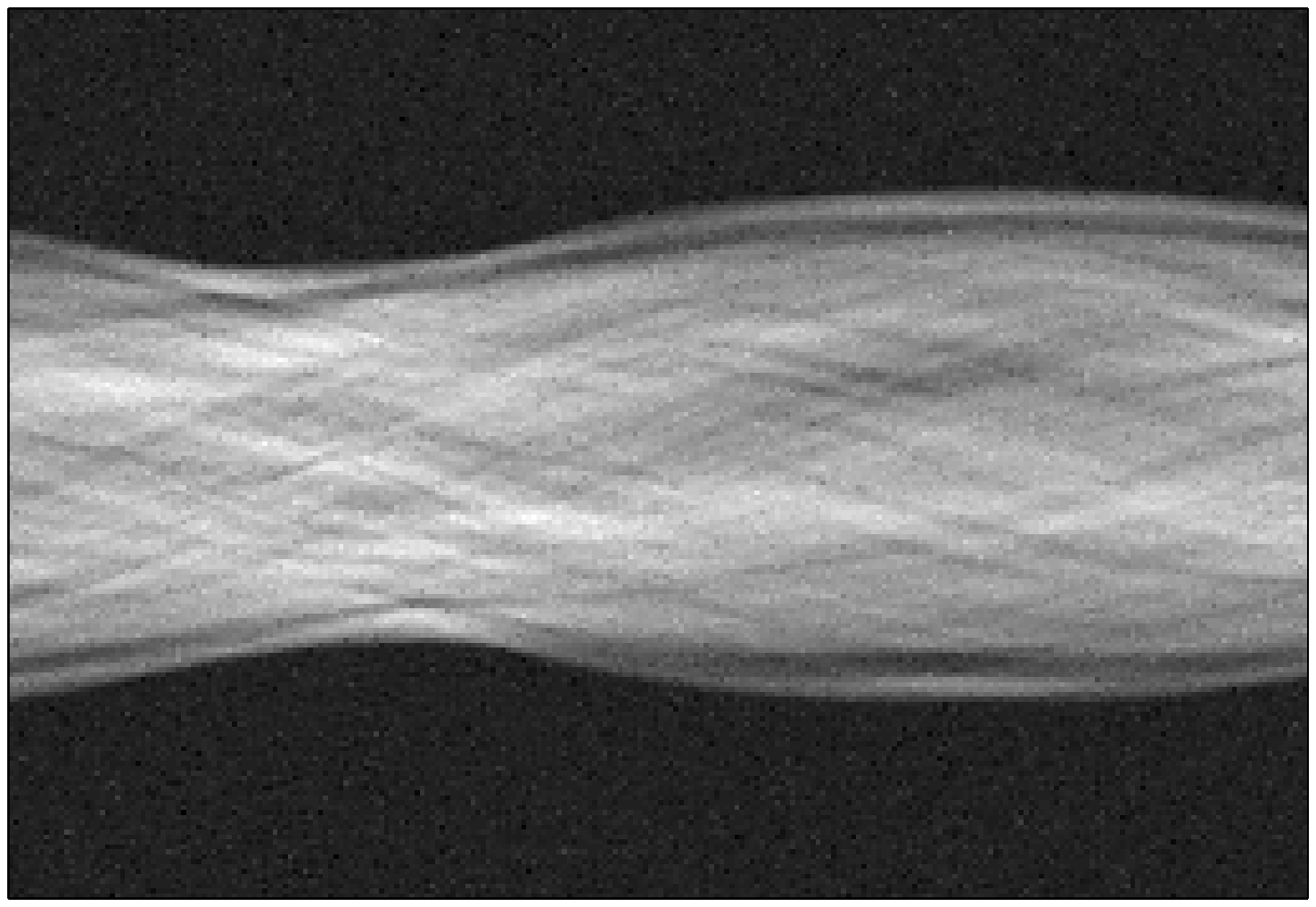}\\
$(a)$ & $(b)$
\end{tabular}
\caption{$(a)$ Initial gray level image with $128 \times 128$ pixels and $(b)$ noisy sinogram
with SNR=$23.5$ dB.}
\label{Fig:Sebal}
  \centering
\begin{tabular}{cc}
\includegraphics[height=5cm]{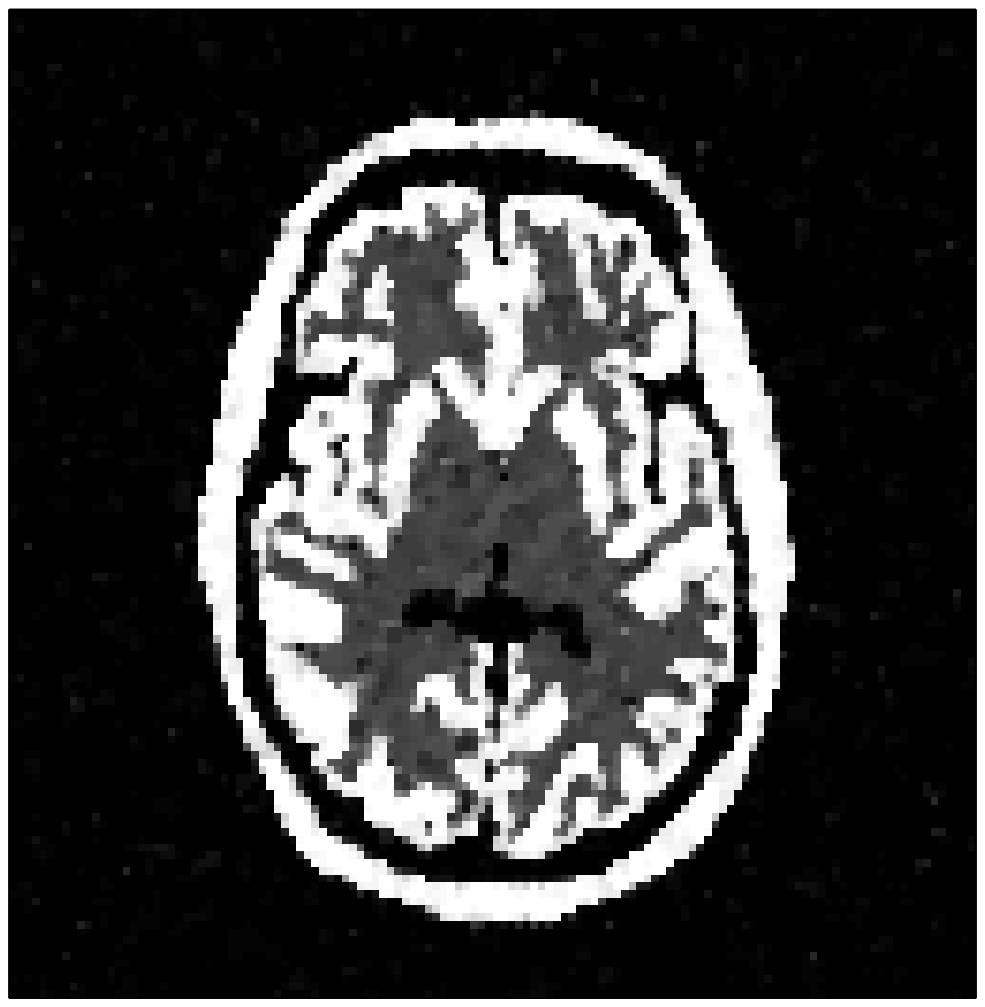}&
\includegraphics[height=5cm]{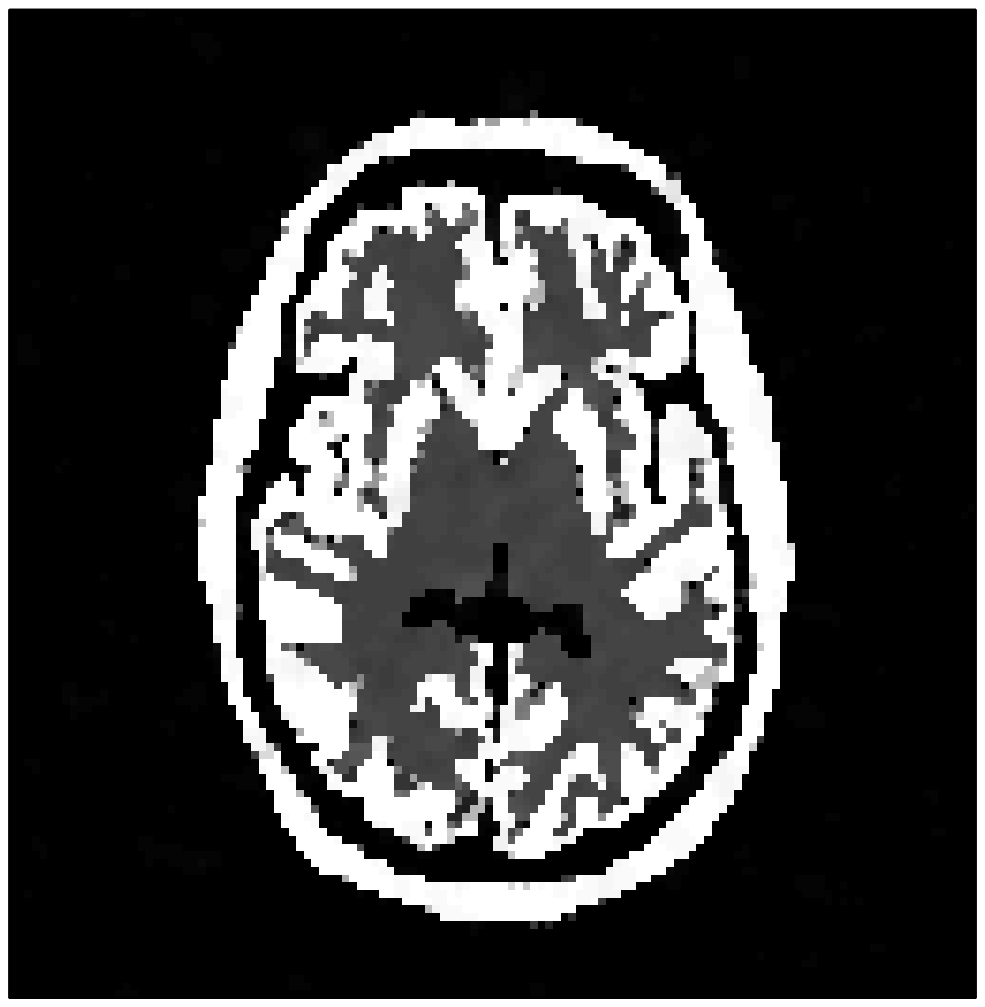}\\
$(a)$ & $(b)$
\end{tabular}
\caption{Reconstructed image using $(a)$ SC penalty function with 3MG, $\lambda = 0.06$, $\delta= 2.9$, $\rho = 1.6$, SNR = $18.05$ dB, MSSIM = $0.81$, or $(b)$ using SNC\ref{ex:psi1} penalty function with 3MG, $\lambda = 1.2$, $\delta= 11.1$, $\rho = 2.2$, SNR = $21.13$ dB, MSSIM = $0.92$.}
%Reconstructed image using (a) SC penalty function with 3MG,
%ë = 0.06, ä = 2.9, ñ = 1.6, SNR = 18.05 dB, MSSIM = 0.81, or (b) SNC(ii) penalty function
%with 3MG, ë = 1.2, ä = 11.1, ñ = 2.2, SNR = 21.13 dB, MSSIM = 0.92.
\label{Fig:SebTomo}
  \centering
\begin{tabular}{ccc}
\includegraphics[height=5cm]{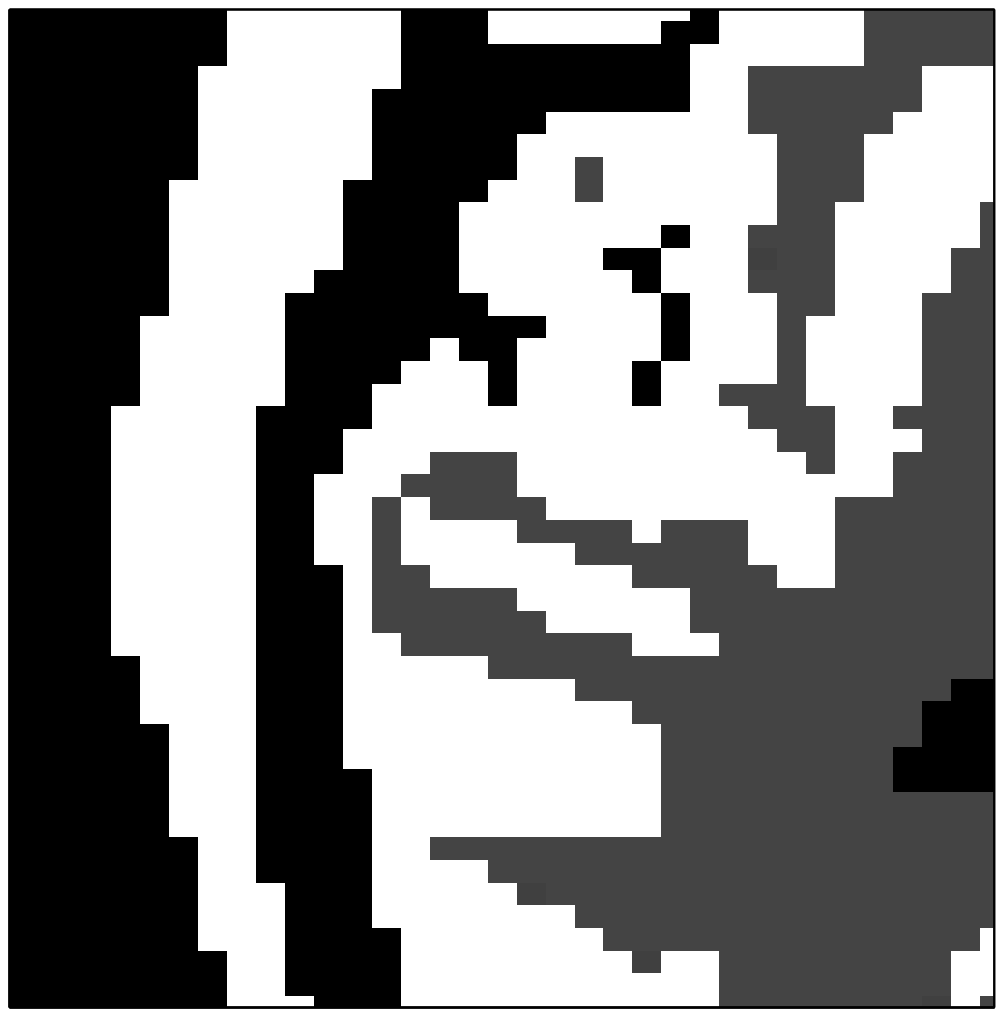}&
\includegraphics[height=5cm]{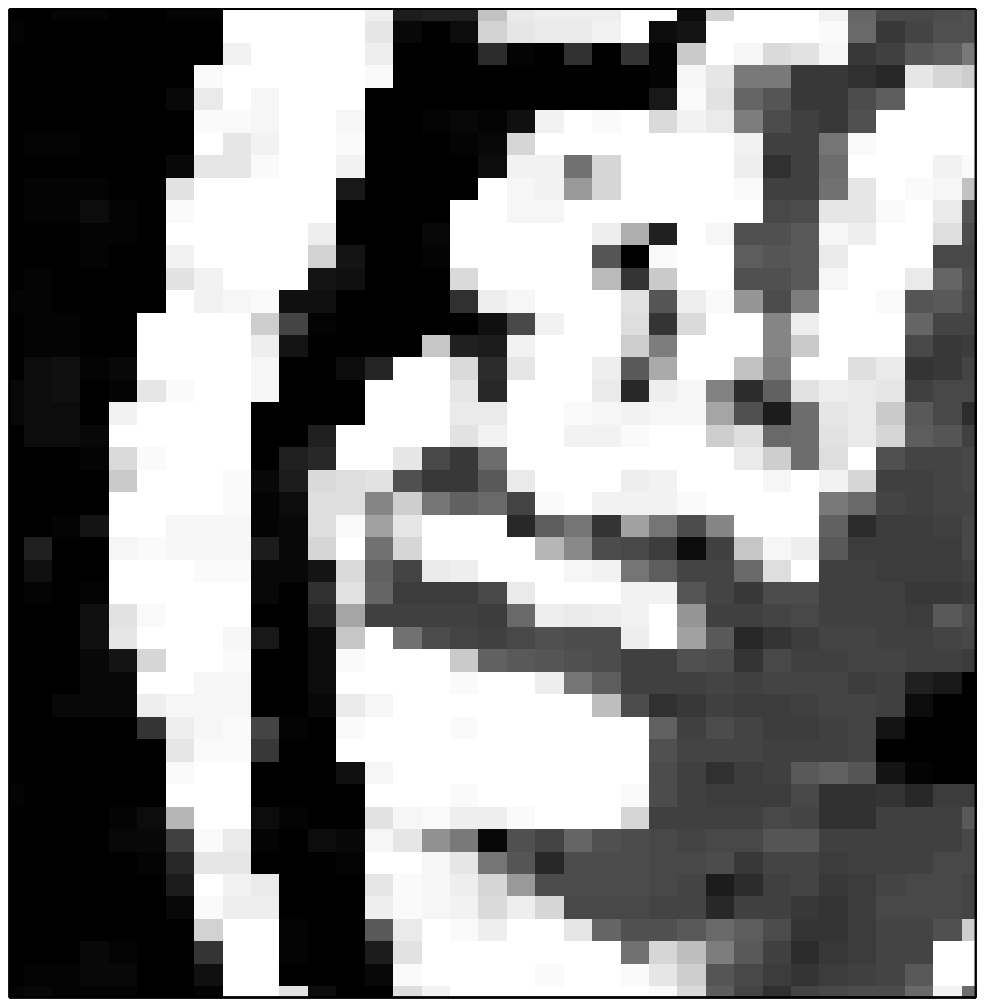}&
\includegraphics[height=5cm]{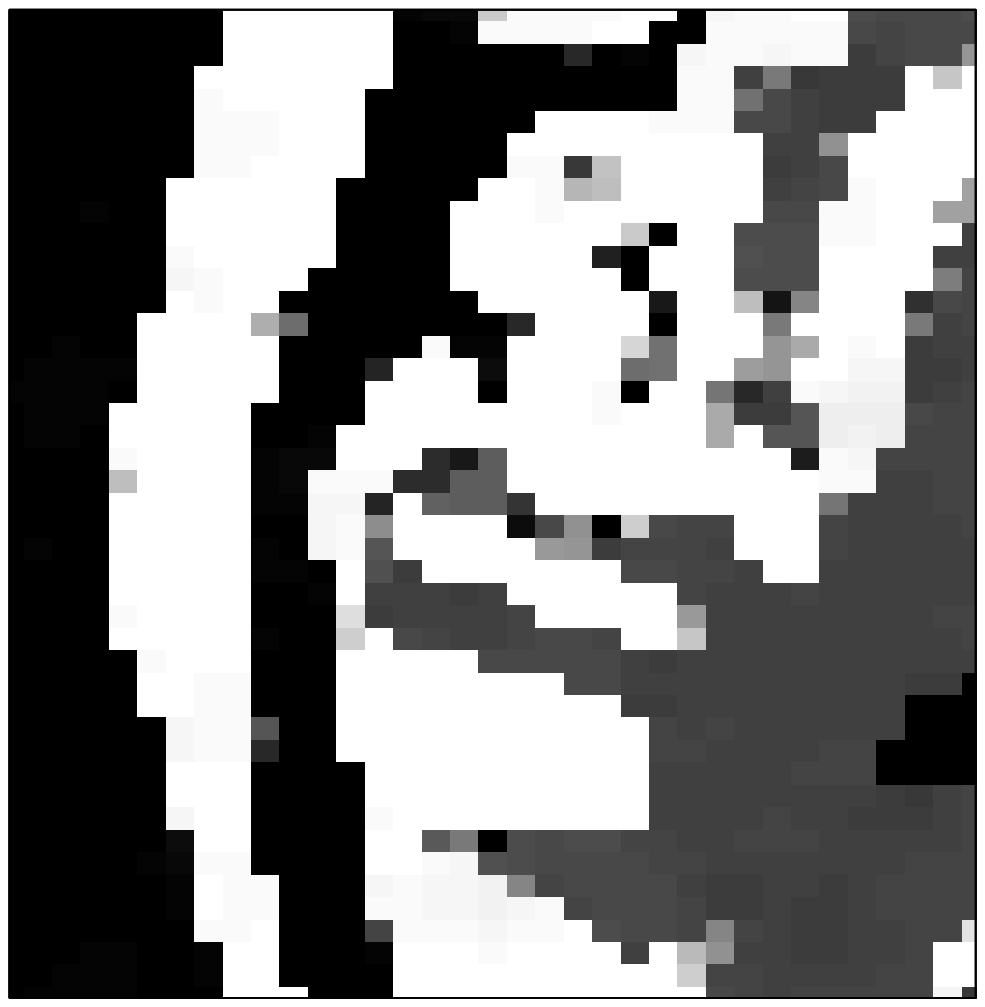}\\
$(a)$ & $(b)$ & $(c)$
\end{tabular}
\caption{$(a)$ Detail of the original image and corresponding reconstructions with $(b)$ convex penalty function and $(c)$ nonconvex penalty function.}
\label{Fig:SebTomoZoom}
\end{figure}

Figure~\ref{Fig:SebTomo} shows the results obtained for penalization strategies SC
and SNC\ref{ex:psi1}, with $(\lambda,\delta,\rho)$ tuned to maximize the SNR of
the restored image. We note that the SNC penalty leads to
better results in terms of reconstruction quality. In particular, it appears to
be well-suited to the reconstruction of the boundaries of the image, as demonstrated in
Figure~\ref{Fig:SebTomoZoom}. Tab.~\ref{Tab:Sebal} illustrates the performance of
the 3MG algorithm, in comparison with the three tested descent algorithms, when
either the SC or the SNC\ref{ex:psi1} penalty function is used. In
this example, the proposed algorithm outperforms the others, in terms of
both iteration number and computational time. In the nonconvex case, because of the
presence of local minimizers, the four algorithms do not lead to the same final
SNR value. It can be noticed that the smallest final criterion value is obtained
with the 3MG algorithm.

%LAPLACIAN NOISE + ISO !!
 \begin{table}[t]%BETA =  0.01 - Matlab time on lab pc
\centering
\renewcommand{\arraystretch}{1.2}
\begin{tabular}{|c|c|c|c|c|c| }
\hline
Penalty function$(\lambda,\delta,\rho)$ & Algorithm &  Iteration & Time & $F_{\delta}$ &SNR\\
\hline
\hline%CONVEX+BNDS
SC $(0.06, 2.9 , 1.6 )$ 
& 3MG & $253$ & \underline{$59.3$} & $1.1 \cdot 10^6$ & $18.05$\\ %ssim = 0.81 
& \newtext[NLCG-HS] &  $358$ & $84.1$ & $1.1 \cdot 10^6$ & $18.05$\\
& \newtext[NLCG-PRP+] &  \newtext[$410$] & \newtext[$96.4$] & \newtext[$1.1 \cdot 10^6$] & \newtext[$18.05$]\\
& \newtext[NLCG-LS] & \newtext[$507$] & \newtext[$141.3$]  & \newtext[$1.1 \cdot 10^6$] & \newtext[$18.05$]\\  
& L-BFGS & $349$ & $82.3$ &    $1.1 \cdot 10^6$ & $18.05$\\
& HQ  &  $728$ & $337$ & $1.1 \cdot 10^6$ & $18.05$\\
\hline
\hline%NON CONVEX+BNDS penalization (i)
SNC\ref{ex:psi1} $(1.2, 11.1 , 2.2)$ 
& 3MG & $516$ &  \underline{$119.8$} & $8.6214 \cdot 10^6$ & $21.13$ \\ %ssim = 0.92 %plus petite valeur finale du critère
& \newtext[NLCG-HS]  & $618$ & $143$ & $8.6228 \cdot 10^6$ & $20.89$ \\ 
& \newtext[NLCG-PRP+]  & \newtext[$876$] & \newtext[$204$] & $8.6229 \cdot 10^6$ & $20.89$ \\
& \newtext[NLCG-LS] & \newtext[$1212$] & \newtext[$360$]   & \newtext[$8.6228 \cdot 10^6$] & \newtext[$20.89$] \\   
& L-BFGS & $870$ & $203$ & $8.6225 \cdot 10^6$ & $21.17 $ \\ 
& HQ & $1152$ & $530$ & $8.6236 \cdot 10^6$ & $20.85$\\
\hline
\end{tabular}
\caption{Results for the tomography problem.}
\label{Tab:Sebal}
\end{table}

\section{Conclusion}
In this work, we have considered a class of smooth nonconvex regularization
functions and we have proposed an efficient minimization strategy for solving the
associated variational problems in imaging applications. Connections with
$\ell_0$ penalized problems were given asymptotically. In addition, a novel
convergence proof of the proposed subspace MM algorithm relying on the
Kurdyka-\L{}ojasiewicz inequality was given. Numerical experiments were
carried out to compare the proposed approach with other state-of-the art
continuous optimization methods (both for nonconvex and convex penalizations)
and with discrete optimization approaches dealing with a truncated quadratic
penalization. In the four presented image processing examples, we argue that
the proposed approach constitutes an appealing alternative to the existing
methods in terms of recovered image quality and computational time.
 \label{sec:conc}

\bibliographystyle{siam}
\bibliography{biben,revueabr,chouzenouxbib}

 \end{document}